\documentclass{amsart}
\usepackage{amssymb}
\usepackage{stmaryrd} 
\usepackage{amsmath} 
\usepackage{amscd}
\usepackage{amsthm}
\usepackage{amsbsy}
\usepackage[margin=1in]{geometry}
\usepackage{commath, longtable}
\usepackage{comment, enumerate}
\usepackage{geometry}
\usepackage[matrix,arrow]{xy}
\usepackage{hyperref}
\usepackage{mathrsfs}
\usepackage{color}
\usepackage{float}
\usepackage{mathtools,caption}
\usepackage[table,dvipsnames]{xcolor}
\usepackage{tikz-cd}
\usetikzlibrary{shapes.geometric,calc,angles}
\usetikzlibrary{decorations.markings, arrows.meta}
\tikzset{
mid arrow/.style={postaction={decorate, decoration={
markings,
mark=at position .5 with {\arrow{Straight Barb}}
}}},
}
\tikzset{
dot/.style = {circle, fill, minimum size=#1,
              inner sep=0pt, outer sep=0pt},
dot/.default = 2pt 
}
\usepackage{asymptote}

\usepackage{longtable}
\usepackage[utf8]{inputenc}
\usepackage[OT2,T1]{fontenc}
\usepackage[normalem]{ulem}
\usepackage{hyperref}
\usepackage[customcolors]{hf-tikz}
\usepackage{enumitem}
\newlist{primenumerate}{enumerate}{1}
\setlist[primenumerate,1]{label={\arabic*$'$}}
\hypersetup{
 colorlinks=true,
 linkcolor=DarkOrchid,
 filecolor=blue,
 citecolor=olive,
 urlcolor=orange,
 pdftitle={KM graph project},
 }
\usepackage{booktabs}

\DeclareSymbolFont{cyrletters}{OT2}{wncyr}{m}{n}
\DeclareMathSymbol{\Sha}{\mathalpha}{cyrletters}{"58}

\newtheorem{theorem}{Theorem}[section]
\newtheorem{lemma}[theorem]{Lemma}

\newtheorem{corollary}[theorem]{Corollary}
\newtheorem{definition}[theorem]{Definition}

\newtheorem*{theorem*}{Theorem}
\numberwithin{equation}{section}

\theoremstyle{remark}
\newtheorem{remark}[theorem]{Remark}
\newtheorem{example}[theorem]{Example}
\newtheorem{nexample}[theorem]{Non-example}

\newcommand{\Image}{\operatorname{Im}}
\newcommand{\Div}{\operatorname{Div}}
\newcommand{\Aut}{\operatorname{Aut}}
\newcommand{\Jac}{\operatorname{Jac}}
\newcommand{\Pic}{\operatorname{Pic}}
\newcommand{\Gal}{\operatorname{Gal}}

\newcommand{\inc}{\operatorname{inc}}
\newcommand{\inv}{\operatorname{inv}}
\newcommand{\ord}{\operatorname{ord}}
\newcommand{\Char}{\operatorname{char}}
\newcommand{\unr}{\operatorname{unr}}
\newcommand{\ram}{\operatorname{ram}}

\newcommand{\rk}{\operatorname{rank}}

\newcommand{\Z}{\mathbb{Z}}
\newcommand{\E}{\mathbb{E}}
\newcommand{\I}{\mathcal{I}}
\newcommand{\cG}{\mathcal{G}}

\newcommand{\bomega}{\boldsymbol{\omega}}

\makeatletter
\theoremstyle{plain} 
\newtheorem*{intr@thm}{\intr@thmname}

\newtheorem*{c@njecture}{\conjn@name}
\newcommand{\myl@bel}[2]{
 \protected@write \@auxout {}{\string \newlabel {#1}{{#2}{\thepage}{#2}{#1}{}} }
 \hypertarget{#1}{}
 } 

\makeatother

\makeatletter
\newcommand{\mylabel}[2]{#2\def\@currentlabel{#2}\label{#1}}
\makeatother

\title[]{Iwasawa Theory of graphs and their duals}

\author[D.~Kundu]{Debanjana Kundu}
\address[DK]{University of Texas - Rio Grande Valley, USA \& University of Regina, Canada}
\email{debanjana.kundu@uregina.ca}

\author[K.~M\"uller]{Katharina M\"uller}
\address[KM]{Institut für Theoretische Informatik, Mathematik und Operations Research, Universität der Bundeswehr München, Werner-Heisenberg-Weg 39, 85577 Neubiberg, Germany}
\email{katharina.mueller@unibw.de}

\keywords{Branched coverings of graphs, Iwasawa theory of graphs, dual graphs}
\subjclass[2020]{Primary 11R23, 05C05, 05C10, 05C60, 05C63}

\begin{document}

\begin{abstract}
In this article, we study questions pertaining to ramified $\Z_p^d$-extensions of a finite connected graph $X$.
{
Let $X_\infty$ denote a $\Z_p^d$ ramified derived graph of $X$ and $\Lambda$ denote the associated Iwasawa algebra.
We first prove an asymptotic growth formula in this general setting and show that it is possible to interpret the characteristic ideal of $\Pic_{\Lambda}(X_\infty)$ as an abstract determinant of an operator on $\Div_{\Lambda}(X_\infty^{\unr})$.}
We also {introduce the} study {of} the Iwasawa theory of dual graphs.
\end{abstract}

\maketitle

\section{Introduction}
Let $F=F_0$ be a number field and fix a prime $p$.
Consider a tower of fields
\[
F=F_0 \subset F_1\subset F_2\subset F_3\subset \dots \subset \dots F_\infty
\]
such that $F_n/F$ is Galois with $\Gal(F_n/F)\simeq \Z/p^n\Z$ and $\Gal(F_\infty/F) \simeq \varprojlim \Z/p^n\Z = \Z_p$.
K.~Iwasawa proved an asymptotic formula for the growth of the $p$-part of the class group in such a tower.
More precisely, he showed that for $n\gg 0$,
\[
{\ord_p}(\vert \textup{Cl}(K_n)\vert)=\mu p^n+\lambda n+\nu,
\]
where $\mu$, $\lambda$ are non-negative integers and $\nu$ is an integer, all of which are independent of $n$.
{Here, we write $\ord_p(\cdot)$ to mean the $p$-adic order (or the $p$-adic valuation).}

In the last couple of years this theory was generalized to unramified coverings of finite connected graphs; see for example \cite{gonet2022iwasawa, Val21, dubose-vallieres, kataoka2023fitting, kleine2023non, MV23, MV24, DLRV_Nagoya, Kleine-Mueller4}.
More recently R.~Gambherra--D.~Vallierés \cite{GV24} considered \textit{branched} $\Z_p$-coverings of finite connected graphs, i.e. coverings where at least one of the vertices is ramified.
Let
\[
X \longleftarrow X_1 \longleftarrow X_2 \longleftarrow \dots \longleftarrow X_n \longleftarrow \cdots
\]
be a tower of branched coverings such that the group $\Z/p^n\Z$ acts without inversion on $X_n/X$. 
{Let $\pi_n\colon X_n\to X$ be the covering map.
We always assume that $\pi_n\circ g=\pi_n$ for all $g\in \Z/p^n\Z$ and to simplify notation we write $\Z/p^n\Z$ acts on $X_n/X$. }
Assume that all graphs $X_n$ are connected and denote the number of spanning trees of $X_n$ by $\kappa(X_n)$, then
\[
\ord_p(\kappa(X_n))=\mu p^n +\lambda n+\nu \text{ for } n\gg 0.
\]

For a finite connected graph, the Kirchhoff Matrix Theorem asserts that the number of spanning trees coincides with the size of the Jacobian; see Section~\ref{Jacobian sec} for a precise definition.
If a group $G$ acts on $X_n/X$ it also acts naturally on the Jacobian, denoted by $\Jac(X_n)$, and the Picard group, denoted by $\Pic(X_n)$, turning both these groups into Iwasawa modules.
Assume that $X_\infty/X$ is a branched covering obtained from a voltage assignment; see Section~\ref{sec: IT of pro-p tower} for details on how such a graph is constructed.
Each such voltage assignment also induces an unramified tower $X_\infty^{\unr}/X$ together with an immersion
\[
\iota \colon X_\infty^{\unr}\to X_\infty.
\]
In \cite{GV24} the authors describe the characteristic ideal of $\varprojlim_n \Pic(X_n)\otimes \Z_p$ in terms of a linear operator defined on $\varprojlim_n \Div(X_n)\otimes \Z_p$.
In the special case when $X_\infty = X_\infty^{\unr}$ is an unramified cover of $X$ their result recovers the Iwasawa Main Conjecture (IMC) proved by the second named author and S.~Kleine in  \cite{Kleine-Mueller4}.

\subsection*{Our results}
The first objective of the present article is to generalize the aforementioned theorem of branched coverings to $\Z_p^d$-coverings\footnote{{By this we mean $d$-many copies of $\Z_p$. If there arises a need for clarification we may also refer to this as $\Z_p^{\oplus d}$.}} of $X$ where $d\ge 1$; see Theorem~\ref{thM.char-ideals}.

\begin{theorem*}
Let $X_\infty=X(\Z_p^d,\mathcal{I},\alpha)$ denote a (ramified) derived graph of $X$.
Then there exists an explicit linear operator $\Delta$ on $\Pic_\Lambda(X_\infty^{\unr})$ such that 
\[
\Char_{\Lambda}(\Pic_\Lambda(X_\infty))=\det(\Delta).\]
\end{theorem*}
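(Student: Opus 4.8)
The plan is to realize $\Pic_\Lambda(X_\infty)$ as the cokernel of an explicit $\Lambda$-linear operator on a \emph{free} $\Lambda$-module canonically attached to the tower, and then to invoke the standard principle that the characteristic ideal of such a cokernel is generated by the determinant of the operator.

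First I would assemble, from the material underlying the asymptotic growth formula, the identification of $\Pic_\Lambda(X_\infty)$ with $\coker(L_\infty)$, where $L_\infty$ is the pro-$p$ Laplacian of the tower (obtained from the finite-level Laplacians $L(X_n)$ by passage to the limit) acting on $\Div_\Lambda(X_\infty)=\varprojlim_n(\Div(X_n)\otimes\Z_p)$; here the transition maps are surjective and the terms are compact, so $\varprojlim$ is exact and commutes with forming cokernels. The subtlety is this: while $\Pic_\Lambda(X_\infty)$ is $\Lambda$-torsion---which is itself part of what the growth formula needs, and which reflects the fact that ramification destroys the ``trivial zero'' present in the unramified case---the source module $\Div_\Lambda(X_\infty)$ is typically \emph{not} $\Lambda$-free: at a ramified vertex the inverse limit of the divisor groups contributes a summand that is only finitely generated over $\Z_p$, namely a quotient of $\Lambda$ by an augmentation-type ideal. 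Hence $L_\infty$ carries no literal determinant, and the theorem must reinterpret $\Char_\Lambda(\Pic_\Lambda(X_\infty))$ through an operator on a genuinely free module.

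Next I would use the immersion $\iota\colon X_\infty^{\unr}\to X_\infty$ to transport the presentation onto the canonical free module $\Div_\Lambda(X_\infty^{\unr})\cong\Lambda^{\,|V(X)|}$---equivalently, onto the operator it induces on $\Pic_\Lambda(X_\infty^{\unr})$. The maps $\iota_*$ on divisors and $\iota^*$ on functions intertwine the two Laplacians up to a correction term supported on the ramification locus $\mathcal{I}$ and determined by the restriction $\alpha|_{\mathcal{I}}$ of the voltage assignment; concretely, $\Delta$ should be the $\alpha$-twisted Laplacian of $X$, modified in the rows and columns indexed by $\mathcal{I}$ by this ramification contribution (in the degenerate case $\mathcal{I}=\varnothing$ one has $\Delta=L_\alpha$ and the statement collapses to the Kleine--M\"uller IMC). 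A snake-lemma comparison of the finite free presentations of $\Pic_\Lambda(X_\infty)$ and of $\coker(\Delta)$, through the morphism induced by $\iota$, should then produce an isomorphism $\coker(\Delta)\cong\Pic_\Lambda(X_\infty)$, or at least an isomorphism modulo a pseudo-null $\Lambda$-module---which already suffices for a statement about characteristic ideals. I expect this to be the main obstacle: one must pin down the correction term precisely and control the failure of $\Div_\Lambda(X_\infty)$ to be free along $\mathcal{I}$, and this is exactly where the $d=1$ argument of \cite{GV24} has to be genuinely generalized rather than merely transcribed. A secondary technical point is that for $d\geq 2$ the ring $\Lambda$ is not a principal ideal domain, so determinants must be read in the abstract sense---as characteristic ideals of cokernels rather than ring elements---which forces the argument to proceed locally at height-one primes throughout, but which conveniently also renders the pseudo-null ambiguity above harmless.

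Finally, the algebraic endgame. For any endomorphism $\Delta$ of $\Lambda^r$ with $\det\Delta\neq 0$ one has $\Char_\Lambda(\coker\Delta)=(\det\Delta)$: since $\Lambda$ is a regular, hence normal, Noetherian domain, it is enough to check this after localizing at each height-one prime $\mathfrak p$, where $\Lambda_{\mathfrak p}$ is a discrete valuation ring; there Smith normal form shows that the length of $(\coker\Delta)_{\mathfrak p}$ over $\Lambda_{\mathfrak p}$ equals $\val_{\mathfrak p}(\det\Delta)$, and reassembling the contributions over all $\mathfrak p$ yields the asserted identity of ideals. The same localization shows that characteristic ideals are insensitive to pseudo-null modules---so the $\Z_p$-summand that appears for $d\geq 2$ contributes nothing---and that regarding $\Delta$ instead as an endomorphism of the torsion module $\Pic_\Lambda(X_\infty^{\unr})$ computes the same ideal, via $\Char_\Lambda(\coker\Delta)=\Char_\Lambda(\ker\Delta)$. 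One must of course also verify $\det\Delta\neq 0$, that is, that $\coker(\Delta)$ is $\Lambda$-torsion; this is precisely the torsionness of $\Pic_\Lambda(X_\infty)$ recorded above, and it can be seen directly by specializing $\Delta$ at the trivial character, where it becomes a grounded (reduced) Laplacian and is therefore invertible. Putting the three steps together gives $\Char_\Lambda(\Pic_\Lambda(X_\infty))=\det(\Delta)$.
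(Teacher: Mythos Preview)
Your proposal follows essentially the same route as the paper: lift $\Pic_\Lambda(X_\infty)$ to a quotient of the free module $\Div_\Lambda(X_\infty^{\unr})$ via $\iota_*$ and Corollary~\ref{cor:ideals}, define $\Delta$ on that free module by modifying the twisted Laplacian at the ramified coordinates, and compare $\coker(\Delta)$ with $\Pic_\Lambda(X_\infty)$ up to a pseudo-null piece. The paper makes the ``correction term'' you anticipate completely explicit---$\Delta$ is the twisted Laplacian on unramified coordinates, multiplication by $\sigma_{v,1}-1$ on rank-$1$ ramified coordinates, and the identity on rank-$\geq 2$ ramified coordinates---and then reads off directly (no snake lemma needed) that $\Image(\Delta)/(\ker\iota_*+\Pr'_\Lambda(X_\infty^{\unr}))$ is generated by the rank-$\geq 2$ basis vectors, each annihilated by $\mathrm{Aug}(I_v)$ of height $\geq 2$, hence pseudo-null.

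One small correction: your proposed verification that $\det\Delta\neq 0$ by specializing at the trivial character does not work. At the trivial character every rank-$1$ ramified row of $\Delta$ becomes zero (since $\sigma_{v,1}-1\mapsto 0$), and even in the purely unramified case the specialization is the ordinary Laplacian $D-A$, which is singular. The paper simply does not perform this check; the identity $\Char_\Lambda(\coker\Delta)=(\det\Delta)$ is valid for any endomorphism of a free $\Lambda$-module under the convention that both sides are the zero ideal when $\coker\Delta$ fails to be torsion.
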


Furthermore, we prove an asymptotic formula for the number of spanning trees of the intermediate graphs of $\Z_p^d$-towers in Theorem~\ref{asymp formula} when $d\geq 2$.

\begin{theorem*}
Let $X_\infty$ be as in the previous theorem and let $X_n/X$ be the intermediate graphs.
Then there exist non-negative constants $\mu$ and $\lambda$ (independent of $n$) such that 
\[
\ord_p(\kappa(X_n) )=\mu p^{nd}+\lambda np^{n(d-1)}+O(p^{n(d-1)}).
\]
\end{theorem*}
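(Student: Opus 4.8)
The plan is to rewrite $\ord_p(\kappa(X_n))$ as the $\Z_p$-length of a module of coinvariants over the multivariable Iwasawa algebra and then to feed this into the Cuoco--Monsky asymptotic formula. Write $\Gamma=\Gal(X_\infty/X)\cong\Z_p^d$, let $\Gamma_n\leq\Gamma$ be the open subgroup with $\Gamma/\Gamma_n\cong(\Z/p^n\Z)^d$ (so that $X_n$ is the intermediate graph attached to $\Gamma/\Gamma_n$), and put $\Lambda=\Z_p[[\Gamma]]\cong\Z_p[[T_1,\dots,T_d]]$. Since each $X_n$ is a finite connected graph, the Kirchhoff Matrix Theorem (Section~\ref{Jacobian sec}) gives $\kappa(X_n)=\#\Jac(X_n)$, hence
\[
\ord_p(\kappa(X_n))=\operatorname{length}_{\Z_p}\!\bigl(\Jac(X_n)\otimes\Z_p\bigr).
\]
Set $M=\varprojlim_n\bigl(\Jac(X_n)\otimes\Z_p\bigr)$, the limit taken along the norm maps; this is a finitely generated $\Lambda$-module.

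\smallskip\noindent\textbf{Step 1 (a control theorem).} I would show that
\[
\ord_p(\kappa(X_n))=\operatorname{length}_{\Z_p}(M_{\Gamma_n})+\varepsilon_n,\qquad\varepsilon_n=O\!\bigl(p^{n(d-1)}\bigr),
\]
where $M_{\Gamma_n}$ denotes the module of $\Gamma_n$-coinvariants of $M$. In the unramified case $X_\infty=X_\infty^{\unr}$ this is a $\Z_p^d$-version of the control theorem of S.~Kleine and the second named author \cite{Kleine-Mueller4}, with $\varepsilon_n=0$. In the ramified case one must in addition keep track of the immersion $\iota\colon X_\infty^{\unr}\to X_\infty$ and of the discrepancy between $\Div_\Lambda(X_\infty)$ and $\Div_\Lambda(X_\infty^{\unr})$: I would compare the defining presentations of $\Jac$ and $\Pic$ over $\Z_p[\Gamma/\Gamma_n]$ with the $\Gamma_n$-coinvariants of the corresponding $\Lambda$-presentations, chase the diagram with the snake lemma, and bound the resulting error by the cohomology of the finitely generated $\Lambda$-modules that occur. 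The crucial point is that the ramification data recorded by $\mathcal I$ is finite, so every correction term is supported on finitely many vertices and edges of $X$, and for $d\geq 2$ such a term contributes to $\ord_p(\kappa(X_n))$ only at an order strictly below $p^{n(d-1)}$ — this is where the hypothesis $d\geq 2$ enters. (Alternatively, one may avoid the module language and observe that $\kappa(X_n)$ equals, up to such a lower-order factor, a determinant of the $\Z_p[\Gamma/\Gamma_n]$-matrix obtained by specializing the operator $\Delta$, and then estimate its $p$-adic valuation as a sum over the characters of $\Gamma/\Gamma_n$.) A by-product of this step is that $M_{\Gamma_n}$ is finite for every $n$ (because each $\Jac(X_n)$ is), and hence that $M$ is $\Lambda$-torsion.

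\smallskip\noindent\textbf{Step 2 (Cuoco--Monsky).} For a finitely generated torsion $\Lambda$-module $M$ with $M_{\Gamma_n}$ finite for all $n$, the Cuoco--Monsky growth formula provides integers $m_0(M)\geq 0$ and $l_0(M)\geq 0$, depending only on $\Char_\Lambda(M)$, such that for $n\gg 0$,
\[
\operatorname{length}_{\Z_p}(M_{\Gamma_n})=m_0(M)\,p^{nd}+l_0(M)\,n\,p^{n(d-1)}+O\!\bigl(p^{n(d-1)}\bigr).
\]
Here $m_0(M)$ is governed by the largest power of $p$ dividing a generator of $\Char_\Lambda(M)$ and $l_0(M)$ is the corresponding codimension-one (Iwasawa $\lambda$-type) invariant; both are non-negative by their very definition. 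Combining with Step~1 and setting $\mu:=m_0(M)\geq 0$, $\lambda:=l_0(M)\geq 0$ gives the assertion. By Theorem~\ref{thM.char-ideals} one moreover has $\Char_\Lambda(M)=(\det\Delta)$, so $\mu$ and $\lambda$ can in fact be computed from the operator $\Delta$, although this refinement is not needed here.

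\smallskip
I expect Step~1 to be the main obstacle: no control theorem for \emph{ramified} $\Z_p^d$-towers of graphs is available off the shelf, so it must be built by hand, with careful attention to how the immersion $\iota$ and the finite ramification set $\mathcal I$ enter the presentations and to the exact order of magnitude of the corrections they produce — and it is precisely here that the assumption $d\geq 2$ makes the argument close, whereas for $d=1$ one needs the sharper bookkeeping of Gambherra and Vallières \cite{GV24}. A secondary, essentially formal task is to verify that the hypotheses of the Cuoco--Monsky formula are met (finiteness of $M_{\Gamma_n}$, supplied by Step~1), and, if one wants the explicit description of $\mu$ and $\lambda$ in terms of $\Delta$, to invoke Theorem~\ref{thM.char-ideals}; the non-negativity of $\mu$ and $\lambda$ is then automatic.
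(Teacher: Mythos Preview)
Your high-level strategy --- package $\ord_p(\kappa(X_n))$ as a module-theoretic invariant and feed it into a Cuoco--Monsky growth formula --- is the right one, but your route diverges from the paper's in an essential way, and your Step~1 as written has a real gap.

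The paper does \emph{not} prove a control theorem reducing to plain coinvariants $M_{\Gamma_n}$. Instead it uses Theorem~\ref{thm:projection} to write $\Pic(X_n)\otimes\Z_p$ as $M/A_n$ with
\[
A_n=(\omega_n(T_1),\dots,\omega_n(T_d))M+\sum_{j}\bomega_{v_j,n}M_j,
\]
where the extra summands come from the ramified principal divisors. This is precisely the data of a \emph{structure} in the sense of Cuoco--Monsky, and the proof then proceeds by (i) decomposing $\Div_\Lambda(X_\infty)=\mathfrak{M}_1\oplus\mathfrak{M}_2\oplus\mathfrak{M}_3$ according to whether a vertex is unramified, has rank-$1$ inertia, or has inertia of rank~$\ge 2$; (ii) bounding the pseudo-null piece $\overline{\mathfrak{M}_3}$ directly via Lemma~\ref{ex-const}; and (iii) observing that on $M'=M/\overline{\mathfrak{M}_3}$ the induced structure is \emph{admissible}, so \cite[Theorems~4.12 and~4.13]{cuoco-monsky} apply verbatim. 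No comparison with $M_{\Gamma_n}$ is ever made.

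The concrete problem with your Step~1 is that $M_{\Gamma_n}$ need not be finite, so ``$\operatorname{length}_{\Z_p}(M_{\Gamma_n})$'' can be infinite and the displayed identity is vacuous. Whenever there is a ramified vertex $v$ with $\rk(I_v)=1$, the operator $\Delta$ picks up a factor $\widetilde T_v=\sigma_v-1$, so $\Char_\Lambda(M)$ is divisible by a height-one prime of the form $(\sigma_v-1)$ (this actually happens in the examples of Section~\ref{sec: towers of derived graphs}, e.g.\ the $d=2$ case with rank-$1$ inertia where the characteristic ideal is $2T$). For any $\Lambda$-torsion module whose characteristic ideal has such a factor, the coinvariants $M_{\Gamma_n}$ have $\Z_p$-rank of order $p^{n(d-1)}$, not zero. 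Your justification ``because each $\Jac(X_n)$ is finite'' only shows that the natural map $M_{\Gamma_n}\to\Jac(X_n)\otimes\Z_p$ has finite image; its kernel --- which is exactly the image of $\Pr_n^{\ram}$ --- can carry all of that $\Z_p$-rank. One can repair this by replacing $\operatorname{length}_{\Z_p}(M_{\Gamma_n})$ with $\ord_p|M_{\Gamma_n}[p^\infty]|$, but then the ``control'' step must simultaneously bound the $\Z_p$-rank discrepancy and the torsion discrepancy, and carrying that out lands you back in the same analysis the paper performs inside the structure framework (Lemma~\ref{ex-const} plus the $\mathfrak{M}_1\oplus\mathfrak{M}_2\oplus\mathfrak{M}_3$ decomposition). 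In short, the paper's use of Cuoco--Monsky structures is not a stylistic choice but the mechanism that absorbs exactly the obstruction your control theorem would have to overcome.
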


In the last section of this paper, we initiate the study of towers of dual (planar) graphs; see Theorem~\ref{dual graph theorem}.

\begin{theorem*}
Let $X_\infty/X$ be a branched $\mathcal{G}$-tower, where $\mathcal{G}$ is a uniform pro-$p$ $p$-adic Lie group.
Assume that all $X_n$ are planar.
If $X_n^\vee/X^\vee$ is a branched covering for all $n$, then $X_\infty^\vee/X^\vee$ is also a $\mathcal{G}$-tower.
\end{theorem*}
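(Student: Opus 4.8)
The plan is to move the tower structure across planar duality by realising each branched covering of graphs as a branched covering of surfaces and then appealing to the classical duality of graphs embedded in surfaces.

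Fix a planar embedding of $X$ in the sphere $S^2$; this determines the dual graph $X^\vee$ together with a canonical bijection between the edge sets $E(X)$ and $E(X^\vee)$. For each $n$ the $\mathcal{G}$-tower gives a branched covering $\pi_n\colon X_n\to X$ which is Galois with finite $p$-group $\mathcal{G}_n:=\Gal(X_n/X)$, where $\mathcal{G}=\varprojlim_n\mathcal{G}_n$, and $\mathcal{G}_n$ acts on $X_n$ over $X$. Following the standard theory of derived embeddings --- lifting the rotation system of $X\subset S^2$ along the voltage data defining the tower --- one embeds $X_n$ in a closed orientable surface $\Sigma_n$ so that $\pi_n$ extends to a branched covering $\Sigma_n\to S^2$, branched over the ramified vertices of $X$ and the centres of the ramified faces, with $\mathcal{G}_n$ acting as its group of deck transformations; moreover the transition maps $X_{n+1}\to X_n$ extend to branched coverings $\Sigma_{n+1}\to\Sigma_n$ with $\Sigma_{n+1}/\ker(\mathcal{G}_{n+1}\twoheadrightarrow\mathcal{G}_n)\cong\Sigma_n$.

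Next, carry out the duality argument one level at a time. The deck transformations of $\Sigma_n\to S^2$ permute the faces and the edges of $X_n$, hence act on the dual graph $Y_n$ of $X_n$ computed inside $\Sigma_n$; because dual edges are identified with primal edges, this action is compatible with the natural map $Y_n\to X^\vee$ induced by $\Sigma_n\to S^2$. Since branched coverings of graphs do not ramify edges, $|E(Y_n)|=|E(X_n)|=|\mathcal{G}_n|\,|E(X)|=|\mathcal{G}_n|\,|E(X^\vee)|$, so this branched covering of the connected graph $X^\vee$ has degree $|\mathcal{G}_n|$; as the bijection on edge sets is $\mathcal{G}_n$-equivariant, $\mathcal{G}_n$ embeds into the deck group and the covering $Y_n\to X^\vee$ is Galois with group exactly $\mathcal{G}_n$, the maps $Y_{n+1}\to Y_n$ coming from $\Sigma_{n+1}\to\Sigma_n$ being compatible with these actions. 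At this stage the two hypotheses are used to identify $Y_n$ with the planar dual: planarity of $X_n$ says $X_n$ has an embedding attaining the maximal number of faces, and the assumption that $X_n^{\vee}/X^\vee$ is a branched covering --- that the face structure of $X_n$ is compatible with $\pi_n$, with ramification on the dual side confined to vertices --- is exactly what forces the tower embedding $X_n\subset\Sigma_n$ to be planar, so that $\Sigma_n\cong S^2$ and $Y_n=X_n^{\vee}$.

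Finally, pass to the inverse limit: the compatible system $\{X_n^{\vee}\to X^\vee\}_n$ of Galois branched coverings with groups $\{\mathcal{G}_n\}_n$ has limit $X_\infty^{\vee}:=\varprojlim_n X_n^{\vee}$, a branched covering of $X^\vee$ on which $\mathcal{G}=\varprojlim_n\mathcal{G}_n$ acts with quotient $X^\vee$; since $\mathcal{G}$ is the same uniform pro-$p$ $p$-adic Lie group as for $X_\infty/X$, this says precisely that $X_\infty^\vee/X^\vee$ is a $\mathcal{G}$-tower. The main obstacle is conceptual rather than computational: ``dual graph'' depends on a choice of embedding, and the embedding of $X_n$ forced by the tower lives a priori on the possibly higher-genus surface $\Sigma_n$, so the heart of the argument is to show that planarity of $X_n$ together with the branched-covering hypothesis genuinely pins that embedding down to a planar one, i.e.\ that the ``tower dual'' $Y_n$ and the ``planar dual'' $X_n^{\vee}$ coincide. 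Tracking ramification under duality --- ramified vertices of $\pi_n$ becoming ramified faces of $X_n^{\vee}\to X^\vee$ and ramified faces becoming ramified vertices --- is the remaining technical point, and is exactly what the hypothesis that $X_n^{\vee}/X^\vee$ is a branched covering is there to control.
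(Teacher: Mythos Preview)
Your approach via derived surface embeddings is substantially different from the paper's, and the step you yourself flag as the ``heart of the argument'' remains a genuine gap: you assert that planarity of $X_n$ together with the branched-covering hypothesis forces $\Sigma_n\cong S^2$ and hence $Y_n=X_n^\vee$, but you give no argument for this. The hypothesis that $X_n^\vee/X^\vee$ is a branched covering refers to the dual computed from a \emph{fixed} planar embedding of $X_n$, which is a priori unrelated to the derived embedding on $\Sigma_n$ coming from the voltage data. Even granting an Euler-characteristic count showing $\Sigma_n$ has genus zero, a single planar graph can have non-isomorphic duals arising from distinct planar embeddings, so identifying the surface dual $Y_n$ with the hypothesised $X_n^\vee$ still requires work you have not supplied.

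The paper sidesteps all of this. It works purely combinatorially with the edge bijection $e\leftrightarrow e^\vee$ that defines planar duality: having fixed a compatible choice of dual edge across levels via the projections $\pi_n$, one simply \emph{defines} $g\cdot e^\vee:=(g\cdot e)^\vee$ on $\E(X_n^\vee)$, then extends to vertices by $g\cdot o(e^\vee):=o((g e)^\vee)$. Freeness and absence of inversion are inherited directly from the action on $\E(X_n)$, and compatibility with the transition maps $\pi_{n,n-1}$ is a one-line naturality check. No surfaces, no derived embeddings, no comparison of duals --- the edge bijection carries the whole argument.
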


{We observe that upon taking the duals of (planar) coverings of a graph, we may not obtain a branched covering of dual graphs.}
In Section~\ref{sec app} we provide conditions to ensure that the hypothesis of the above theorem are satisfied.
{In particular, we show that $X_\infty$ can have \emph{at most} two ramified vertices if we want the dual tower to also be a branched cover of $X^\vee$.}
    
\subsection*{Future Direction}
This topic of Iwasawa theory of graphs, and in particular branched coverings of finite connected graphs leaves plenty of room for further investigations.
\begin{itemize}
\item Even though the operator $\Delta$ describing the characteristic ideal of $\Pic_\Lambda(X_\infty)$ generalizes the linear operator appearing in the IMC for the unramified case, it is not clear whether this operator can be interpreted as a $p$-adic $L$-function in the branched setting as well.
In other words, so far the analytic side of an IMC does not exist in the setting of branched covers.

\item The asymptotic formula for the number of spanning trees in a branched $\Z_p^d$-covering involves an error term of size $O(p^{n(d-1)})$.
This error term does not occur in the unramified setting: for an unramified $\Z_p^d$-covering there exists a polynomial $P(X,Y)\in \Z[X,Y]$ of total degree $d$ and degree at most $1$ in $Y$ such that 
\[
\ord_p(\kappa(X_n))=P(p^n,n).
\]
In a future project the authors intend to investigate whether it is possible to improve the error term.

\item It would also be interesting to study a Kida-type formula for the branched $\Z_p^d$-covers. 
 
\end{itemize}

\subsection*{Organization}
Section~\ref{sec: prelim} is preliminary in nature.
In this section we collect all the definitions and notations which will be used throughout the article.
In Section~\ref{sec: planar derived graphs} we prove our first result which provides sufficient conditions for the derived graph of a planar graph to be planar.
Section~\ref{sec: towers of derived graphs} concerns proving Iwasawa theory results for $\Z_p^d$ towers of branched covers of $X$.
We prove an asymptotic growth formula in the general setting.
We also show that it is possible to interpret the characteristic ideal of $\Pic_{\Lambda}(X_\infty)$ as an abstract determinant of an operator on $\Div_{\Lambda}(X_\infty^{\unr})$.
In Section~\ref{sec: results on dual graphs} we introduce the study of towers of dual graphs.
Throughout this article, we elucidate our results with explicit examples.

\subsection*{Acknowledgements}
The authors thank Rusiru Gambheera, Antonio Lei, and Daniel Vallier{\'e}s for comments on an earlier draft of the paper and for helpful discussions.
{We thank Karen Maeghar for answering our questions.
DK acknowledges the support of an early career AMS--Simons travel grant.
We thank the referee for their comments and suggestions which helped improve the exposition and simplify the proof of Theorem~\ref{thm:planar}.}

\section{Preliminaries}
\label{sec: prelim}

In this section we collect definitions and set notations that are required for the remainder of the article.
We also record some elementary lemmas in this section.

\subsection{Basic Definitions in Graph Theory}

A graph $X$ consists of a vertex set $V(X)$ and a set of \emph{directed edges} $\E(X)$ along with an \emph{incidence function} 
\begin{align*}
\inc \colon \E(X) &\longrightarrow V(X) \times V(X)\\
e &\mapsto (o(e), t(e))
\end{align*}
and an \emph{inversion function} 
\begin{align*}
\inv \colon \E(X) &\longrightarrow \E(X)\\
e &\mapsto \overline{e} 
\end{align*}
satisfying the following conditions for all $e \in \E(X)$ 
\begin{enumerate}
\item[(i)] $\overline{e} \neq e$ 
\item[(ii)] $\overline{\overline{e}} = e$
\item[(iii)] $o(\overline{e}) = t(e)$ and $t(\overline{e}) = o(e)$. 
\end{enumerate}
The vertex $o(e)$ is called the \emph{origin} and the vertex $t(e)$ is called the \emph{terminus} of the directed edge $e$.
Given $v \in V(X)$, write 
\begin{align*}
\E^o_v(X) & = \{e \in \E(X) : o(e) =v\}\\
\E^t_v(X) & = \{e \in \E(X) : t(e) =v\}.
\end{align*}
The inversion map induces the bijection 
\[
\inv \colon \E^o_v(X) \longrightarrow \E^t_v(X) \textrm{ for all } v \in V(X).
\]
The set of \emph{undirected edges} is obtained by identifying $e$ with $\overline{e}$ and is denoted by $E(X)$.
Finally, write $E_v(X)$ to denote the set of all undirected edges adjacent to $v$. 
When it is clear from the context which graph is being referred to, we often drop it from the notation of $E$, $E_v$, $\E^o_v$, $\E^t_v$, and $V$.

\begin{definition}
Let $X$ and $Y$ be directed graphs.
A \emph{morphism} $f \colon Y \to X$ of graphs consists of two functions $f_V \colon V(Y) \to V(X)$ and $f_{\E} \colon \E(Y) \to \E(X)$ which satisfy the following properties for all $e \in \E(Y)$
\begin{enumerate}
    \item[\textup{(}i\textup{)}] $f_V(o(e)) = o(f_{\E}(e))$, 
    \item[\textup{(}ii\textup{)}] $f_V(t(e)) = t(f_{\E}(e))$, 
    \item[\textup{(}iii\textup{)}] $\overline{f_{\E}(e)} = f_{\E}(\overline{e})$.
\end{enumerate}
\end{definition}

We write $f$ to denote both $f_V$ and $f_{\E}$ when there is no chance of confusion.
Given a morphism of graphs $f \colon Y \to X$ and any vertex $v \in V(Y)$, the restriction $f\vert_{\E^{o}_v(Y)}$ induces a function \[
f\vert_{\E^o_v(Y)} \colon \E^o_v(Y) \longrightarrow \E^o_{f(v)}(X).
\]

\subsection{Jacobian of Graphs}
\label{Jacobian sec}
\begin{definition}
A \emph{divisor} on a (possibly infinite\footnote{When we consider infinite graphs we require that the graph is locally finite.}) graph $X$ is an element of the free abelian group on the vertices $V (X)$ defined as
\[
\Div(X) = \left\{ \sum_{v\in V (X)}
a_v v \mid  a_v \in \Z\right\}
\]
where each $\sum_{v\in V (X)}a_v v$ is a formal linear combination of the vertices of $X$ with integer coefficients and only finitely many $a_v$ are non-zero (when $X$ is an infinite graph).


The \emph{degree homomorphism} is defined as
\begin{align*}
\deg \colon \Div(X) &\longrightarrow \Z\\
\sum_{v\in V (X)}a_v v &\mapsto \sum_{v\in V(X)} a_v.
\end{align*}
The kernel of this degree map is the subgroup of divisors of degree 0, and is denoted as $\Div^0(X)$.
\end{definition}

{Write $\mathcal{M}(X)$ to denote the set of $\Z$-valued functions on $V(X)$.
Define the function $\psi_v\in \mathcal{M}(X)$ as follows:
\[
\psi_v(v_0) = \begin{cases}
1 & \text{ if } v = v_0 \\
0 & \text{ if } v \neq v_0.
\end{cases}
\]
Note that $\psi_v$ forms a $\Z$-basis of $\mathcal{M}(X)$ as $v$ runs over all vertices $v\in V(X)$.
We can then define a group morphism
\begin{align*}
    \operatorname{div}: \mathcal{M}(X) & \longrightarrow \Div(X)\\
    \psi_{v_0} & \mapsto \sum_{v\in V(X)} \rho_{v}(v_0) \cdot v
\end{align*}
where
\[
\rho_v(v_0) = \begin{cases}
    \abs{\E_{v_0}^o(X)} - 2\times \#(\text{undirected loops at }v_0) & \text{ if } v=v_0\\
    -\#(\text{undirected edges from }v \text{ to }v_0) & \text{ if } v\neq v_0.
\end{cases}
\]}

\begin{definition}
{With notation as introduced above, define the \emph{principal divisors on $X$} as
\[
\Pr(X) = \operatorname{div}(\mathcal{M}(X)).
\]}
The \emph{Picard group of $X$} is then defined as
\[
\Pic(X) = \Div(X)/\Pr(X).
\]
The \emph{Jacobian group of $X$} is defined as
\[
\Jac(X) = \Div^0(X)/\Pr(X).
\]
{In the literature, this also referred to as \emph{Picard group of degree zero} and denoted by $\Pic^0(X)$.}
\end{definition}

\subsection{Group action on graphs, Galois theory, and coverings of graphs}

Write $\Aut(X)$ to denote the group of automorphisms of a graph $X$. 
\begin{definition}
Let $G$ be a group and $X$ be a graph.
The group $G$ is said to \emph{act on $X$} if  {$G$ acts on $V(X)$ and $\E(X)$ as follows:
for $g\in G$, if $e$ is an edge from $v$ to $w$ , then $g\cdot e$ is an edge from $g\cdot v$ to $g\cdot w$.}
The group $G$ is said to \emph{act without inversion} if for all $e \in \E(X)$ and all $g \in G$, one has $g \cdot e\neq \overline{e}$.
\end{definition}

When a group $G$ acts on a graph $X$, it is understood that $G$ acts on both the set of vertices and edges.
If $G$ acts on a graph $X$ such that the action on $V(X)$ is free, then it acts freely on the set of edges as well.
{The converse however is not true.}

\begin{lemma}
Let $X$ be a locally finite graph, and let $G$ be a group acting on $X$ without inversion.
Then, $\Jac(X)$ is a $\Z[G]$-module.
\end{lemma}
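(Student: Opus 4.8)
The plan is to show that the $G$-action on $\Div(X)$ descends to $\Pic(X)$ and $\Jac(X)$, which reduces to checking that $G$ permutes the principal divisors $\Pr(X)$, i.e.\ that $g\cdot\operatorname{div}(f)=\operatorname{div}(f\circ g^{-1})$ for every $g\in G$ and every $f\in\mathcal{M}(X)$. First I would record that $G$ acts on $\mathcal{M}(X)$ by $(g\cdot f)(v):=f(g^{-1}\cdot v)$, so in terms of the distinguished basis $g\cdot\psi_{v_0}=\psi_{g\cdot v_0}$, and that the diagonal $G$-action on $\Div(X)=\bigoplus_{v}\Z v$ is the obvious one $g\cdot\bigl(\sum a_v v\bigr)=\sum a_v (g\cdot v)$. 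Both are well defined because $G$ acts on the vertex set by bijections; local finiteness guarantees the sums defining $\operatorname{div}(\psi_{v_0})$ are finite, so $\operatorname{div}$ is a genuine $\Z$-linear map. Everything is then a $\Z$-linear statement, so it suffices to verify $G$-equivariance of $\operatorname{div}$ on the basis elements $\psi_{v_0}$.

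The key step is the identity $g\cdot\operatorname{div}(\psi_{v_0})=\operatorname{div}(\psi_{g\cdot v_0})$. Unwinding the definition, the $v$-coefficient of the left-hand side is $\rho_{g^{-1}\cdot v}(v_0)$ and the $v$-coefficient of the right-hand side is $\rho_{v}(g\cdot v_0)$, so I must check $\rho_{g^{-1}v}(v_0)=\rho_v(g v_0)$, equivalently $\rho_w(v_0)=\rho_{g w}(g v_0)$ for all vertices $w,v_0$. This is where the graph axioms and the action definition come in: since $g$ restricts to a bijection $\E^o_w(X)\to\E^o_{gw}(X)$ (it sends an edge with origin $w$ to an edge with origin $gw$, by the definition of the action), we get $\lvert\E^o_w(X)\rvert=\lvert\E^o_{gw}(X)\rvert$; because $g$ preserves origins and termini it carries undirected loops at $w$ to undirected loops at $gw$ and undirected edges from $v_0$ to $w$ to undirected edges from $gv_0$ to $gw$ bijectively; hence the diagonal and off-diagonal cases of $\rho$ match up. The hypothesis that $G$ acts without inversion is what makes ``undirected edge'' well behaved under $G$ — it ensures $g$ does not collapse an edge pair $\{e,\overline e\}$ onto a loop, so passing from directed to undirected edges is compatible with the action; I would flag this as the one subtlety worth a sentence.

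Having established $g\cdot\operatorname{div}(f)=\operatorname{div}(g\cdot f)$ for all $f\in\mathcal{M}(X)$, it follows that $g\cdot\Pr(X)=g\cdot\operatorname{div}(\mathcal{M}(X))=\operatorname{div}(g\cdot\mathcal{M}(X))=\operatorname{div}(\mathcal{M}(X))=\Pr(X)$, using that $f\mapsto g\cdot f$ is a bijection of $\mathcal{M}(X)$. Therefore the $G$-action on $\Div(X)$ passes to the quotient $\Pic(X)=\Div(X)/\Pr(X)$, making it a $\Z[G]$-module. Finally, since the degree of a divisor is visibly $G$-invariant ($\deg(g\cdot D)=\deg D$, as $g$ only permutes vertices), the action preserves $\Div^0(X)$, and $\Pr(X)\subseteq\Div^0(X)$ is $G$-stable, so the action descends to $\Jac(X)=\Div^0(X)/\Pr(X)$, which is thus a $\Z[G]$-submodule of $\Pic(X)$. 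I expect the bookkeeping around the inversion map — making sure the loop/non-loop distinction in the formula for $\rho_w$ transforms correctly — to be the only place that needs genuine care; the rest is formal.
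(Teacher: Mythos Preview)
Your argument is correct and is essentially the standard one: the paper itself does not give a proof but simply refers to \cite[Corollary~2.6]{GV24}, and what you have written is precisely the verification one expects to find there, namely that $\operatorname{div}$ is $G$-equivariant so that $\Pr(X)$ is $G$-stable and the action descends.

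One small correction is worth making. Your explanation of the role of the ``without inversion'' hypothesis is not accurate. Since each $g$ acts as a graph automorphism, it commutes with the inversion map ($\overline{g\cdot e}=g\cdot\overline{e}$), and hence already induces a well-defined bijection on undirected edges; there is no danger of ``collapsing $\{e,\overline e\}$ onto a loop,'' because $g$ is injective on directed edges. Your equalities $\rho_{w}(v_0)=\rho_{gw}(gv_0)$ follow from this alone, so in fact the ``without inversion'' hypothesis is not used anywhere in your argument, and the lemma holds without it. The hypothesis is imposed in the paper because it is needed later for forming quotient graphs and for the tower constructions, not for the $\Z[G]$-module structure on $\Jac(X)$ itself. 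So you should either remove that sentence or rephrase it to say that the hypothesis is superfluous for the present statement.
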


\begin{proof}
See \cite[Corollary~2.6]{GV24}.    
\end{proof}

\begin{definition}
Let $f\colon Y\to X$ be morphism of directed graphs, that is surjective on the set of edges and on the set of vertices. 
The morphism $f$ is a \emph{branched cover} if for each vertex $v\in V(Y)$, the cardinality $m_v= (f\vert_{\E^o_v}^{-1}(e))$ does not depend on the choice of $e\in \E^o_{f(v)}(X)$.
The cardinality $m_v$ is called the \emph{ramification index} at $v$.
If $m_v=1$, the morphism $f$ is said to be an \emph{unramified cover}. 

\smallskip

The function $f\vert_{\E^o_v} \colon \E^o_v(Y) \to \E^o_{f(v)}(X)$ is $m_v$-to-$1$.
The graph $Y$ is called a \emph{$d$-sheeted covering} of a connected graph $X$ if for all $w\in V(X)$
\[
d = [Y:X] = \sum_{v \in f^{-1}(w)} m_{v}
\]
\end{definition}

When $Y$ is a $d$-sheeted unramified cover of a connected graph $X$, then for all $w\in V(X)$
\[
d = [Y:X] =  \sum_{v\in f^{-1}(w)} 1 = \# \{v \in V(Y) \mid f(v) = w\}.
\]

\begin{definition}
{Let $Y$ be a covering of $X$ with projection map $\pi \colon Y \to X$.}
\begin{enumerate}
\item[\textup{(}i\textup{)}] \emph{Galois group of unramified covering:} If $Y/X$ is a $d$-sheeted \emph{unramified} covering 
such that there are exactly $d$ graph-automorphisms $\sigma \colon Y \rightarrow Y$ satisfying $\pi \circ \sigma = \pi$, {such a covering} is {called} \emph{normal} or \emph{Galois}.
In this case, the Galois group is $G$ and is given by
\[
G = \Gal(Y/X) = \{\sigma \colon Y \rightarrow Y \mid \pi \circ \sigma = \pi\}.
\]

\item[\textup{(}ii\textup{)}] \emph{Galois group of branched covering:}
If $Y/X$ is a \emph{branched} covering and $G$ is a group that acts freely without inversion on $\E(Y)$, acts trivially on $\E(X)$, and is compatible with the covering $\pi\colon Y\to X$, {such a branched covering} $Y/X$ is called \emph{Galois} {and the corresponding} Galois group is $G$.
\end{enumerate}
\end{definition}

\begin{remark}
{Another way to think about unramified Galois covers is by considering the Galois group as a deck transformation.
An unramified Galois cover of a finite (connected) graph $X$ can be constructed by making a finite group $G$ act freely on $V(X)$ and without inversion.
Note that under these hypotheses, the notion of a quotient graph $G\backslash X$ makes sense.
The natural morphism of graphs $X \to G\backslash X$ is an unramified Galois cover with group of deck transformations isomorphic to $G$.
The notion of branched covering relaxes the condition that the action of $G$ on vertices is free but still requires that $G$ acts freely (and without inversion) on directed edges.}
\end{remark}

Let $Y/X$ be an abelian covering of connected graphs; i.e., we assume that its Galois group $G$ is abelian.
Then the Jacobian group $\Jac(Y)$ is naturally equipped with a $\Z[G]$-module structure.

\subsection{Iwasawa theory of pro-\texorpdfstring{$p$}{} tower of graphs obtained from a voltage assignment}
\label{sec: IT of pro-p tower}
We describe the construction of uniform pro-$p$ $p$-adic Lie extensions of connected graphs.
Let $\cG$ be a (possibly non-Abelian) uniform pro-$p$ $p$-adic Lie group of dimension $d$.
This means $\cG$ is topologically generated by $d$ elements and has a filtration 
\[
\cG \supseteq \cG^p \supseteq \cG^{p^2} \supseteq \cdots \supseteq \cG^{p^n} \supseteq \cdots 
\]
such that $\cG^{p^{n+1}} \trianglelefteq \cG^{p^n}$ and each subsequent quotient is isomorphic to $(\Z/p\Z)^d$.
{When $\cG$ is abelian, we write $\cG = \Gamma^{\oplus d} \simeq \Z^{\oplus d}_p$ where $\Gamma$ is a multiplicative group topologically isomorphic to $\Z_p$.}

Let $X$ be a finite connected graph with vertex set $V(X)$ and set of edges $\E(X)$.
We define a \textit{voltage assignment} function as follows 
\[
\alpha \colon \E(X) \longrightarrow \cG \textrm{ satisfying  } \alpha(\overline{e}) = \alpha(e)^{-1}.
\]
For each vertex $v\in V(X)$, choose a closed subgroup $I_v$ of $\cG$, and define the set 
\[
\I = \{(v,I_v) : v \in V(X)\}.
\]
This gives a graph $X_\infty = X(\cG,\I,\alpha)$ with vertex set equal to the disjoint union
\[
V(X_\infty) = \bigsqcup_{v\in V(X)} \{v\} \times \cG/I_v
\] and the collection of directed edges is given by 
\[
\E(X_\infty)=\E(X)\times \cG.
\]
Let $g\in \cG$.
The directed edge $(e,g)$ connects the vertex $(o(e),gI_{o(e)})$ to the vertex $(t(e), g \alpha(e)I_{t(e)})$.
Finally, let $\overline{(e, g)} = (\overline{e},g \alpha(e))$.
This is a branched cover $X_\infty \to X$.
Moreover, the graph $X_\infty$ is infinite.

In order to get finite graphs, for each positive integer $n$, consider the natural surjective group morphism $\pi_n \colon \cG \twoheadrightarrow \cG_n$, where $\cG_n = \cG/\cG^{p^n}$.
Set $\alpha_n = \pi_n \circ \alpha \colon \E(X) \to \cG_n$ and define subgroups 
\[
\I_n = \{(v,\pi_n(I_v)) \ : \ v \in V(X)\}
\]
of $\cG_n$ indexed by the vertices of $X$.
This gives a family of graphs $X_n =X(\cG_n,I_n,\alpha_n)$ which are finite.
The natural surjective group morphisms $\cG_{n+1} \twoheadrightarrow \cG_n$ induce branched covers $X_{n+1} \to X_n$ for each non-negative integer $n$.
Therefore, we obtain a (Galois) tower of graphs 
\[
X = X_0 \longleftarrow X_1 \longleftarrow X_2 \longleftarrow \cdots \longleftarrow X_n \longleftarrow \cdots \longleftarrow X_\infty, 
\]
where each map $X_{n+1} \to X_n$ is a branched cover satisfying $[X_{n+1} : X_n] = p^d$.
Such a tower will be referred to as a \emph{branched $\cG$-tower of finite graphs} provided that all $X_n$ are connected.

When the closed subgroups $I_v$ are all trivial, the graph $X_\infty = X(\cG,\alpha)$ is the \emph{unramified $\cG$-tower}.

\begin{lemma}
\label{lemma:number of vertices}
Let $(X_n)_{n\in \mathbb{N}}$ be a $\mathcal{G}$-tower of graphs constructed as above and assume that all $X_n$ are connected.
Suppose that there exists an index $n_0$ such that the number of ramified vertices in $X_\infty/X_{n_0}$ stabilizes.
Let {$\mathsf{v}_r$} be the number of ramified vertices and let {$\mathsf{v}_u$} be the number of unramified vertices of $X_\infty/X_{n_0}$.
Then the number of vertices of $X_n$ for $n\ge n_0 $ is given by
\[
\mathsf{v}_r+\frac{\vert \mathcal{G}_n\vert }{\vert \mathcal{G}_{n_0}\vert } \mathsf{v}_u.
\]
\end{lemma}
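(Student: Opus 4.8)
The plan is to compute $|V(X_n)|$ directly from the voltage description of the vertex set and compare it with $|V(X_{n_0})|=\mathsf v_r+\mathsf v_u$. Recall that $V(X_n)=\bigsqcup_{v\in V(X)}\{v\}\times\mathcal G_n/\pi_n(I_v)$, so that $|V(X_n)|=\sum_{v\in V(X)}[\mathcal G_n:\pi_n(I_v)]$, and similarly at level $n_0$. The covering $X_n\to X_{n_0}$ is induced by $\mathcal G_n\twoheadrightarrow\mathcal G_{n_0}$ together with $\pi_n(I_v)\mapsto\pi_{n_0}(I_v)$; writing $H_n=\ker(\mathcal G_n\twoheadrightarrow\mathcal G_{n_0})$ (a normal subgroup of $\mathcal G_n$, since $\mathcal G^{p^{n_0}}\trianglelefteq\mathcal G$), a short coset count shows that the fibre of $X_n\to X_{n_0}$ over any vertex lying above a fixed $v\in V(X)$ has cardinality $|H_n|/|\pi_n(I_v)\cap H_n|$, a number depending only on $v$. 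Since $|H_n|=|\mathcal G_n|/|\mathcal G_{n_0}|=p^{d(n-n_0)}=[X_n:X_{n_0}]$, it therefore suffices to prove the following dichotomy: for $n\ge n_0$ and every $v\in V(X)$, either $\pi_n(I_v)\cap H_n=1$ (so each of the $[\mathcal G_{n_0}:\pi_{n_0}(I_v)]$ vertices of $X_{n_0}$ over $v$ acquires exactly $|\mathcal G_n|/|\mathcal G_{n_0}|$ preimages, and $v$ is unramified in $X_\infty/X_{n_0}$), or $H_n\subseteq\pi_n(I_v)$ (so each such vertex has a unique preimage, and $v$ is ramified in $X_\infty/X_{n_0}$).

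The heart of the argument is establishing this dichotomy from the stabilization hypothesis. I would use that $\mathcal G$, being a uniform pro-$p$ group, is torsion-free, so every nontrivial closed subgroup is infinite; hence if $I_v\cap\mathcal G^{p^{n_0}}\ne 1$ then $I_v\cap\mathcal G^{p^n}\ne 1$ for all $n\ge n_0$ (it has finite index in the infinite group $I_v\cap\mathcal G^{p^{n_0}}$), i.e.\ a vertex that is ramified in $X_\infty/X_{n_0}$ remains ramified in $X_\infty/X_n$ for all $n\ge n_0$. Combining this with the index identity $[\mathcal G_n:\pi_n(I_v)]=[\mathcal G_{n_0}:\pi_{n_0}(I_v)]\cdot|H_n|/|\pi_n(I_v)\cap H_n|$ (which follows from $\pi_n(I_v)/(\pi_n(I_v)\cap H_n)\cong\pi_{n_0}(I_v)$), the number of ramified vertices in $X_\infty/X_n$ equals $\sum_{v\ \mathrm{ramified}}[\mathcal G_{n_0}:\pi_{n_0}(I_v)]\cdot|H_n|/|\pi_n(I_v)\cap H_n|$. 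The assumption that this quantity does not grow with $n$ forces $|H_n|/|\pi_n(I_v)\cap H_n|=1$ for every ramified $v$, i.e.\ $H_n\subseteq\pi_n(I_v)$; passing to the limit this says $\mathcal G^{p^{n_0}}\subseteq I_v$, so such a vertex is in fact totally ramified in $X_\infty/X_{n_0}$. For all remaining vertices one has $\pi_n(I_v)\cap H_n=1$ by definition of being unramified, so the dichotomy holds.

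With the dichotomy in hand the count is immediate: letting $V_r,V_u\subseteq V(X)$ denote the ramified and unramified vertices, so $\mathsf v_r=\sum_{v\in V_r}[\mathcal G_{n_0}:\pi_{n_0}(I_v)]$ and $\mathsf v_u=\sum_{v\in V_u}[\mathcal G_{n_0}:\pi_{n_0}(I_v)]$, one gets
\[
|V(X_n)|=\sum_{v\in V_r}[\mathcal G_{n_0}:\pi_{n_0}(I_v)]\cdot 1+\sum_{v\in V_u}[\mathcal G_{n_0}:\pi_{n_0}(I_v)]\cdot\frac{|\mathcal G_n|}{|\mathcal G_{n_0}|}=\mathsf v_r+\frac{|\mathcal G_n|}{|\mathcal G_{n_0}|}\,\mathsf v_u,
\]
as claimed. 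The only genuinely delicate point is the second paragraph: ruling out vertices that are ramified but not yet totally ramified past level $n_0$, and recognizing that the stabilization hypothesis is precisely the condition guaranteeing that every ramified vertex is totally ramified. Everything else is bookkeeping with cosets and indices in the surjection $\mathcal G_n\twoheadrightarrow\mathcal G_{n_0}$.
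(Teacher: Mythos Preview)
Your proof is correct and follows the same underlying idea as the paper's, namely that under the stabilization hypothesis each ramified vertex of $X_{n_0}$ has a unique preimage in $X_n$ while each unramified vertex splits completely into $|\mathcal G_n|/|\mathcal G_{n_0}|$ preimages. The paper's proof, however, is extremely terse: it simply asserts the one-to-one correspondence between ramified vertices of $X_n$ and $X_{n_0}$ ``by the definition of $n_0$'' and writes down the preimage of an unramified vertex without further justification. What you call the ``genuinely delicate point''---that stabilization forces $\mathcal G^{p^{n_0}}\subseteq I_v$ for every ramified $v$, so that ramified really means \emph{totally} ramified past level $n_0$---is precisely the content that the paper leaves implicit. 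Your coset bookkeeping via $H_n=\ker(\mathcal G_n\twoheadrightarrow\mathcal G_{n_0})$ and the index identity makes this explicit and rigorous, and your use of torsion-freeness of uniform pro-$p$ groups to show that ramification persists upward is the right ingredient to get the inequality that stabilization then turns into an equality.
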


\begin{proof}
When $n\ge n_0$, the vertices of $X_n$ ramified in $X_\infty/X_n$ and the ones ramified in $X_\infty/X_{n_0}$ are in one-to-one correspondence by the definition of $n_0$.
The pre-images of the unramified vertices are given by $\{(v,g)\mid g\in \mathcal{G}^{p^{n_0}}/\mathcal{G}^{p^n}\}$. 
The claim follows. 
\end{proof}

\subsection{Matrices associated to graphs}
We now remind the reader of some matrices associated to graphs that will be important throughout the discussion.
{We often prefer to work with a multiplicative notation, so we write $\Gamma$ in place of $\Z_p$.}

\begin{definition}
\label{matrix defn}
Let $X$ be a finite connected graph.
{Let $\alpha$ be a voltage assignment on $\E(X)$ with values in $\cG = {\Gamma^{\oplus d} \simeq} \Z_p^d$.
Let $R\subset V(X)$ be a subset.
For each vertex $v\in R$, fix a non-trivial subgroup $I_v\subset \mathcal{G}$.
The vertices in $R$ are called \emph{ramified} and the vertices in $V(X)\setminus R$ are called \emph{unramified}.
If $I_v\cong {\Gamma}$, fix a topological generator $\sigma_v$ of $I_v$ and define $\widetilde{T}_v=\sigma_v-1$.
Otherwise set $\widetilde{T}_v=1$. }

\begin{enumerate}
\item[\textup{(}i\textup{)}] The \emph{adjacency matrix} $A = {A_X} = \left(a_{i,j}\right)$ is defined as one where
\[
a_{i,j} = \begin{cases}
\textrm{twice the number of undirected loops at }v_i  & \textrm{ when }i=j\\
\textrm{number of undirected edges connecting the $v_i$ to $v_j$} & \textrm{ when }i\neq j.
\end{cases}
\]
\item[\textup{(}ii\textup{)}] The \emph{valency matrix} $D= {D^R_X} = (d_{i,j})$ is a diagonal matrix where 
\[
d_{i,j} = \begin{cases}
\abs{\E_{v_i}^o(X)} & \textrm{ when }i=j \textrm{ and } v_i \textrm{ is an unramified vertex}\\
0 & \textrm{ when }i\neq j \textrm{ or } v_i \textrm{ is a ramified vertex}.
\end{cases}
\]
\item[\textup{(}iii\textup{)}]  {If $R=\emptyset$, }the matrix $D - A$ is called the \emph{Laplacian matrix}.
\item[\textup{(}iv\textup{)}]
Suppose that there are $s$ many vertices in $X$ arranged such that $v_1, \ldots, v_{r}$ are unramified vertices and $v_{r+1}, \ldots, v_s$ are the ramified vertices.
The matrix $B = B^{{R,\alpha}}_{{X}}= (b_{i,j})\in M_{s\times s}({\Z_p\llbracket \cG \rrbracket})$ is defined as follows
\[
b_{i,j} = \begin{cases}
    - \widetilde{T}_{v_i} &\textrm{ when } r+1 \le i = j \le s\\
     \displaystyle\sum_{\substack{e\in \E(X) \\ \inc(e)=(v_j, v_i)}}{\alpha(e)} & \textrm{ when } 1\le i \le s \textrm{ and } 1\le j \le r\\
     0 & \textrm{otherwise}.
\end{cases}
\]
\end{enumerate} 
\end{definition}


\subsection{Growth patterns in \texorpdfstring{$\Z_p^d$}{}-towers of graphs}
A strong analogy between number theory and graph theory has been observed by mathematicians.
This suggests that it might be possible to study the variation of the $p$-part of the number of spanning trees as one goes up a $\Z^d_p$-tower of graphs.
In fact, there is a perfect analogue of Iwasawa’s asymptotic class number formula in the setting of graph theory \cite{Val21, gonet2022iwasawa}.
For example, when $d=1$ there exist non-negative integers $\mu, \lambda, n_0$, and $\nu \in \Z$ such that for $n\geq n_0$
\[
\ord_p(\kappa(X_n)) = \mu p^n + \lambda n + \nu
\]
where $\kappa(X_n)$ is the number of spanning trees of $X_n$.

In the following paragraphs we summarize some relevant notions from \cite{Kleine-Mueller4}.
Henceforth, let $\cG = {\Gamma^{\oplus d} \simeq} \Z_p^{\oplus d}$ for some $d\geq 1$.
As discussed previously, to every finite connected graph $X$ we can attach a finite abelian group called the Jacobian of $X$.
Moreover, any cover of finite connected graphs $f \colon Y \to X$ induces a surjective group morphism of their respective Jacobian, i.e., there is a surjective map
\[
f_* \colon \Jac(Y) \longrightarrow \Jac(X).
\]

Starting with a $\Z^d_p$-tower of a finite connected graph $X$, we obtain a compatible system of maps between the Sylow-$p$ subgroups of the Jacobians, i.e.,
\[
\Jac(X_{n+1})[p^\infty] \longrightarrow \Jac(X_n)[p^\infty].
\]
Recall that $\Jac(X_n)[p^\infty]$ is a $\Z_p[(\Z/p^n\Z)^d]$-module.
Since $X$ is assumed to be finite and connected,
\[
\Jac(X)[p^\infty] \simeq \Z_p \otimes_{\Z} \Jac(X);
\]
the same holds for each subsequent layer $X_n$.
We then define the inverse limit 
\[
\Jac_{\Lambda}(X_\infty) := \varprojlim_{n} \Jac(X_n)[p^\infty] = \varprojlim_{n} \left( \Z_p \otimes_{\Z} \Jac(X_n) \right);
\]
this is a finitely generated torsion module over the Iwasawa algebra $\Lambda=\Z_p\llbracket \mathcal{G}\rrbracket$.

It is well-known \cite{Gre73} that there is a non-canonical isomorphism 
\begin{align*}
{\Lambda = }\Z_p\llbracket \cG \rrbracket &\simeq \Z_p \llbracket T_1,\dots, T_d \rrbracket\\
\gamma_i & \mapsto 1+T_i,
\end{align*}
where $\gamma_1,\dots ,\gamma_d$ are topological generators of $\mathcal{G} = {\Gamma^{\oplus d}}$.
{Henceforth, we will go between the Iwasawa algebra and its identification with the formal power series ring in $d$-variables as is convenient.}

{A finitely generated, torsion $\Lambda$-module $M$ is said to be \emph{pseudo-null} if its annihilator $\operatorname{Ann}_{\Lambda}(M)$ has height at least 2. Equivalently, $\operatorname{Ann}_{\Lambda}(M)$ is generated by two co-prime elements.
The \emph{structure theorem} of finitely generated $\Lambda$-modules asserts that there is a \emph{pseudo-isomorphism} (i.e., homomorphism such that kernel and cokernel are pseudo-null) from $M$ to a (unique) module $\bigoplus_{i=1}^t \Lambda/\langle f_i^{e_i}\rangle$ where $f_i\in \Lambda$ are irreducible.}

Viewing $\Jac_{\Lambda}(X_\infty)$ as a $\Z_p\llbracket T_1,\dots, T_d \rrbracket$-module, and using the structure theorem we can define the \textit{characteristic ideal} ${\Char}_{\Lambda}(\Jac_{\Lambda}(X_\infty))$.
In other words,
\begin{align}\label{pseudooo}
{\Char}_{\Lambda}(\Jac_{\Lambda}(X_\infty)) = \langle f_{X}(T_1, \ldots, T_d) \rangle,
\end{align}
where $f_X(T_1, \ldots, T_d)$ is an element of $\Lambda$ and is a generator of the characteristic ideal {(determined up to a unit)}; it is called the \emph{characteristic element} of $\Jac_{\Lambda}(X_\infty)$.

\textbf{Notation:}
We write $\Div_{\Lambda}(X_\infty)$ to denote $\Div(X_\infty) \otimes_{{\Z_p[\mathcal{G}]}} {\Lambda}$ 
and similarly for $\Div^0_{\Lambda}(X_\infty)$, $\Pr_{\Lambda}(X_\infty)$.

\section{Planar Derived Graphs}
\label{sec: planar derived graphs}

We start by recalling the construction of an unramified \emph{derived graph}, which is denoted by $Y=X(G,\alpha)$.

\subsubsection*{Construction:}
Let $X$ be a finite and connected graph.
Suppose that the voltage group $G$ is finite of size $N$.
Label the elements in $G$ as $\tau_0, \cdots, \tau_{N-1}$.

To get the vertices of $Y$, consider $N$ copies of each vertex $x \in V (X)$ and label them as $x_{\tau_0}, \cdots, x_{\tau_{N-1}}$.
Therefore, the derived graph $Y$ has $N \abs{X}$ many vertices.
Suppose that $x \to x'$ is a directed edge in $X$ with voltage assignment $\alpha(e_{x,x'}) = \tau \in G$.
For every $\tau_i$ in $G$, there is an edge in $Y$ joining $x_{\tau_i}$ to $x'_{\tau_i \cdot \tau}$.
This process is then repeated for every $x\in V(X)$.

When $G$ is infinite (as will be in our case) we can repeat the same construction formally.

\subsubsection*{Intuition:}
A voltage graph takes $\abs{G}$-many copies of the base graph $X$, stacked vertically above each other.
Then for each directed edge $x \to x'$ with voltage $\tau$, we do the following: for each sheet indexed by $\sigma \in G$, we join the vertex $x_{\sigma}$ to the vertex $x'_{\sigma \cdot \tau}$.
So if we ``look at the layer of sheets from above'' we see vertex $x$ joined to vertex $x'$ exactly $\abs{G}$-many times.

\bigskip

The main goal of this section is to provide a sufficient condition for the derived graph of a planar graph to be planar.
It is easy to construct examples of planar graphs and voltage assignments, such that the derived graph is \textit{not} planar.
We provide an example below
\begin{example}
Let $X$ be a bouquet graph with two loops, i.e. a graph consisting of a single vertex with two loops $e_1,e_2$. Let $G=(\Z/5\Z,+)$ and define $\alpha(e_1)=1$, $\alpha(e_2)=2$. Let $Y=X(G,\alpha)$. Then $Y$ is the complete graph on $5$ vertices which is not planar.

\begin{center}
\begin{tikzpicture}[scale=1]
\node[inner sep=1pt] (A) at (-2,0) {\tiny{$A$}};

\fill (-1.85,0) circle (1.25pt);

\path[scale=2] 
        (A.east) edge [out=30, in=-30, distance=5mm, -, thick, "$e_1$" ](A.east);
\path[scale=4] 
        (A.east) edge [out=30, in=-30, distance=5mm, -, "$e_2$", thick, red] (A.east);
\end{tikzpicture}
\hspace{1cm}
    \begin{tikzpicture}[
                    ]
\node (n)   [regular polygon, 
             regular polygon sides=5, minimum size=22mm,
             draw=black, thick]  {};
\foreach \i in {1,2,...,5} 
{     
    \foreach \j [evaluate=\j as \k using int(\i+\j)] in {2,3}
    {
    \ifnum\k<6
    \draw[thick,red]   (n.corner \i) -- (n.corner \k);
    \fi
    }
\node [dot] at (n.corner \i) {};
}
    \end{tikzpicture}
\end{center}
\end{example}

\begin{theorem}
\label{thm:planar}
Fix a finite group $G$.
Let $X$ be a finite undirected planar graph and $Y$ be the undirected derived graph of $X$ of a single\footnote{By this we mean that exactly one of the edges has a non-trivial voltage assignment.} \emph{unramified} voltage assignment.
Then $Y$ is planar.
\end{theorem}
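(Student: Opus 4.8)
The plan is to proceed by reducing the claim to a statement about adding a single ``twisted'' family of edges to a disjoint union of planar graphs. First I would observe that since only one edge $e$ of $X$ carries a non-trivial voltage $\tau \in G$, the derived graph $Y$ is built as follows: take $N = |G|$ disjoint copies $X^{(g)}$ of $X$ with the edge $e$ \emph{removed} from each copy; call this planar graph $X'$, and note $\bigsqcup_{g \in G} X^{(g)}$ (with $e$ deleted everywhere) is planar because each component is a subgraph of the planar graph $X$. Then $Y$ is obtained from this disjoint union by re-inserting, for each $g \in G$, one edge joining the copy of $o(e)$ in sheet $g$ to the copy of $t(e)$ in sheet $g\tau$. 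So $Y = X' \cup \{\text{edges } e_g : g \in G\}$ where the $e_g$'s form a perfect matching between the $o(e)$-vertices and the $t(e)$-vertices across the sheets, and the sheets are permuted cyclically within each coset of $\langle \tau \rangle$.

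The key step is to choose a good planar embedding of $X'$ to accommodate these new edges. Let $H = \langle \tau \rangle \le G$ have order $m$; the added edges decompose $Y$-minus-$X'$ into $N/m$ many $m$-cycles-worth of matchings, one for each coset of $H$, and on each coset $gH = \{g, g\tau, g\tau^2, \dots\}$ the edges $e_{g\tau^i}$ chain the sheets into a cycle $g \to g\tau \to \cdots \to g\tau^{m-1} \to g$ (connecting $o(e)$ of one sheet to $t(e)$ of the next). I would fix a planar embedding of a single copy of $X$ in which the edge $e$ lies on the outer face — this is always possible: delete $e$, embed the connected planar graph $X \setminus e$, and since $o(e)$ and $t(e)$ both lie on \emph{some} faces, one can re-route; more carefully, one embeds $X$ so that $e$ is on the unbounded face (if $X\setminus e$ is $2$-connected this needs an argument, otherwise it is easy). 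Granting that $e$ is on the outer boundary, both $o(e)$ and $t(e)$ appear on the outer face of the embedding of $X^{(g)} \setminus e$. Now I would lay out the $m$ sheets of a coset $gH$ consecutively in a horizontal strip (sheet $g\tau^i$ to the right of sheet $g\tau^{i-1}$), each embedded with $e$-on-the-outer-face, draw the matching edges $e_{g\tau^i}$ from $t(e)$ of sheet $g\tau^i$ to $o(e)$ of sheet $g\tau^{i+1}$ along the top of the strip without crossings (consecutive sheets), and close up the cycle with the one remaining edge $e_{g\tau^{m-1}}$ routed around the outside (the bottom) of the entire strip — this single long edge does not cross anything since it travels through the unbounded region. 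Finally, stack the $N/m$ strips (one per coset) vertically, each in its own disjoint region of the plane. This yields a planar embedding of $Y$.

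The main obstacle I expect is the claim that $X$ admits a planar embedding with the special edge $e$ on the outer face, and relatedly that after this is done the two endpoints $o(e), t(e)$ are accessible from the outer face in the right cyclic order to route the matching edges without crossings. For a $2$-connected planar graph every edge lies on the boundary of exactly two faces and one can always choose the embedding (on the sphere, then pick the face) so that a prescribed edge is on the outer face; for general connected $X$ one works block by block. The cyclic-ordering subtlety is handled because we only ever connect \emph{one} vertex of each sheet to \emph{one} vertex of the next, so a single point of access on the outer face suffices — there is no competition for boundary slots. I would also need the trivial remark that adding the ``wrap-around'' edge of each $m$-cycle through the unbounded face is legitimate because at that stage the rest of the strip has been embedded in a bounded region. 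Modulo these topological bookkeeping points, which I would state as a lemma about embeddings with a distinguished boundary edge, the construction above gives an explicit planar drawing of $Y$.
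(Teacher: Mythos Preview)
Your proposal is correct and follows essentially the same approach as the paper: delete the distinguished edge $e_0$, take $|G|$ planar copies of $X' = X \setminus e_0$ with $e_0$ on the outer face, and reconnect consecutive sheets along the outer region. Two minor points of comparison: the paper dispatches your ``main obstacle'' (placing $e_0$ on the outer face) in one line via stereographic projection from the sphere, which avoids any case analysis on $2$-connectivity; and rather than carrying the coset decomposition through explicitly, the paper simply reduces to cyclic $G$ by observing that it suffices to prove each connected component of $Y$ is planar.
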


\begin{proof}
Let $e_0$ be the edge with the non-trivial voltage assignment $\alpha$.
Let $v$ and $w$ be the vertices joined by $e_0$.
It suffices to show that each connected component of $Y$ is planar.
Therefore, we can without loss of generality assume that $G$ is generated by $\alpha(e_0)$ where $\alpha$ is the voltage assignment.

{Using a stereographic projection we can assume that $e_0$ lies on the outer face of $X$.
Let $X'$ be the graph obtained from $X$ by removing the edge $e_0$.
Let $Y'$ be $\vert G\vert$ copies of $X'$ indexed by $\alpha(e_0)^i$. We call the different copies of $X'$ the sheets of $Y'$. As $X'$ is planar the same is clearly true for $Y'$. The graph $Y$ is now obtained from $Y'$ by connecting the vertex $v$ in the $\alpha(e_0)^i$ sheet with the vertex $w$ in the $\alpha(e_0)^{i+1}$-sheet. As $e_0$ is an edge on the outer face this is still a planar graph}\footnote{{We thank the anonymous referee for pointing out this simplified proof to us.}}.
\end{proof}

We explain the above theorem via two explicit examples.
The first example is rather simple but might be helpful for the reader to understand the terms defined in the proof of the above theorem.
The nuances of the proof become more clear with the second example.

\begin{example}
\label{unramified derived graph}
Let the voltage group $G=\{\mathbf{1},\tau\}$.
Consider the finite planar graph $X$ where exactly one edge (namely $AB$) has a non-trivial voltage assignment $\alpha(e_{AB}) = \tau$.
\begin{center}
\begin{tikzpicture}[scale=1]
\node[inner sep=0pt, shape = circle] (A) at (0,1.5) {\tiny{$A$}};
\node[inner sep=0pt, shape = circle] (B) at (-1.5,0) {\tiny{$B$}}; 
\node[inner sep=0pt, shape = circle] (C) at (1.5,0) {\tiny{$C$}}; 
\node[inner sep=0pt, shape = circle] (T) at (-0.82,0.75) {$\tau$};

\fill (0,1.35) circle (1.5pt);
\fill (1.35,0) circle (1.5pt);
\fill (-1.35,0) circle (1.5pt);

\draw[thick, mid arrow, dashed] (A.south) to (B.east);
\draw[thick, mid arrow] (C.west) to (A.south);
\draw[thick, mid arrow] (B.east) to (C.west);
\end{tikzpicture}
\end{center}
Here $e_0 = e_{AB}$.
{In this case $X'$ is following graph}
\begin{center}
\begin{tikzpicture}[scale=1]
\node[inner sep=0pt, shape = circle] (A) at (0,1.5) {\tiny{$A$}};
\node[inner sep=0pt, shape = circle] (B) at (-1.5,0) {\tiny{$B$}}; 
\node[inner sep=0pt, shape = circle] (C) at (1.5,0) {\tiny{$C$}};

\fill (0,1.35) circle (1.5pt);
\fill (1.35,0) circle (1.5pt);
\fill (-1.35,0) circle (1.5pt);

\draw[thick, mid arrow] (C.west) to (A.south);
\draw[thick, mid arrow] (B.east) to (C.west);
\end{tikzpicture}
\end{center}
{In the case $G=\Z/2\Z$ the graph $Y'$ has the following shape:}
\begin{center}
\begin{tikzpicture}[scale=1]
\node[inner sep=0pt, shape = circle] (A) at (0,1.5) {\tiny{$A_1$}};
\node[inner sep=0pt, shape = circle] (B) at (-1.5,0) {\tiny{$B_1$}}; 
\node[inner sep=0pt, shape = circle] (C) at (1.5,0) {\tiny{$C_1$}}; 
\node[inner sep=0pt, shape = circle] (A') at (5.5,1.5) {\tiny{$A_2$}};
\node[inner sep=0pt, shape = circle] (B') at (4,0) {\tiny{$B_2$}}; 
\node[inner sep=0pt, shape = circle] (C') at (7,0) {\tiny{$C_2$}}; 

\fill (0,1.33) circle (1.5pt);
\fill (1.3,0) circle (1.5pt);
\fill (-1.3,0) circle (1.5pt);
\fill (5.45,1.33) circle (1.5pt);
\fill (6.8,0) circle (1.5pt);
\fill (4.2,0) circle (1.5pt);

\draw[thick, mid arrow] (C.west) to (A.south);
\draw[thick, mid arrow] (B.east) to (C.west);
\draw[thick, mid arrow] (C'.west) to (A'.south);
\draw[thick, mid arrow] (B'.east) to (C'.west);
\end{tikzpicture}
\end{center}

The derived graph $Y=X_1 = X(\Z/2\Z, \alpha)$ in this case is a 6-gon.
We draw this graph in two ways for ease of visualization.
\begin{center}
\begin{tikzpicture}[scale=0.75]
\node[inner sep=0pt, label = left:\tiny{$A_1$}] (A1) at (-2,0) {};
\node[inner sep=0pt, label = right:\tiny{$A_2$}] (B3) at (2,0) {}; 
\node[inner sep=0pt, label = above:\tiny{$B_2$}] (B2) at (2*-0.5,2*0.866) {};
\node[inner sep=0pt, label = above:\tiny{$C_2$}] (A2) at (2*0.5,2*0.866) {}; 
\node[inner sep=0pt, label = below:\tiny{$C_1$}] (B1) at (2*-0.5,2*-0.866) {};
\node[inner sep=0pt, label = below:\tiny{$B_1$}] (A3) at (2*0.5,2*-0.866) {}; 

\fill (2,0) circle (1.5pt);
\fill (-2,0) circle (1.5pt);
\fill (2*-0.5,2*0.866) circle (1.5pt);
\fill (2*-0.5,-2*0.866) circle (1.5pt);
\fill (2*0.5,2*0.866) circle (1.5pt);
\fill (2*0.5,-2*0.866) circle (1.5pt);

\draw[thick] (A1) to (B1);
\draw[thick, dashed] (A1) to (B2);
\draw[thick] (B2) to (A2);
\draw[thick] (A2) to (B3);
\draw[thick] (A3) to (B1);
\draw[thick, dashed] (A3) to (B3);
\end{tikzpicture} 
\hspace{1cm}
\begin{tikzpicture}[scale=1]
\node[inner sep=0pt, label = above:\tiny{$A_1$}] (A) at (0,1) {};
\node[inner sep=0pt, label = below:\tiny{$B_1$}] (B) at (-1,0) {}; 
\node[inner sep=0pt, label = below:\tiny{$C_1$}] (C) at (1,0) {}; 
\node[inner sep=0pt, label = above:\tiny{$A_2$}] (A2) at (0,1.5) {};
\node[inner sep=0pt, label = left:\tiny{$B_2$}] (B2) at (-1.5,0) {}; 
\node[inner sep=0pt, label = right:\tiny{$C_2$}] (C2) at (1.5,0) {};   

\fill (0, 1) circle (1.5pt);
\fill (0,1.5) circle (1.5pt);
\fill (1,0) circle (1.5pt);
\fill (-1,0) circle (1.5pt);
\fill (1.5,0) circle (1.5pt);
\fill (-1.5,0) circle (1.5pt);

\draw[thick] (A.south) to (C.west);
\draw[thick, dashed] (A.south) to (B2.east);
\draw[thick] (B2.east) to[bend right=45] (C2.west);
\draw[thick] (A2.south) to (C2.west);
\draw[thick] (C.west) to (B.east);
\draw[thick, dashed] (A2.south) to (B.east);
\end{tikzpicture}
\end{center}
{If we consider the group $G=\Z/4\Z$ instead we obtain a 12-gon}
\begin{center}
\begin{tikzpicture}[scale=1]
\node[inner sep=0pt, shape = circle] (A) at (0,1) {\tiny{$A_1$}};
\node[inner sep=0pt, shape = circle] (B) at (-1,0) {\tiny{$B_1$}}; 
\node[inner sep=0pt, shape = circle] (C) at (1,0) {\tiny{$C_1$}}; 
\node[inner sep=0pt, shape = circle] (A2) at (0,1.5) 
{\tiny{$A_2$}};
\node[inner sep=0pt, shape = circle] (B2) at (-1.5,0) {\tiny{$B_2$}}; 
\node[inner sep=0pt, shape = circle] (C2) at (1.5,0) {\tiny{$C_2$}}; 
\node[inner sep=0pt, shape = circle] (A3) at (0,2) {\tiny{$A_3$}};
\node[inner sep=0pt, shape = circle] (B3) at (-2,0) {\tiny{$B_3$}}; 
\node[inner sep=0pt, shape = circle] (C3) at (2,0) {\tiny{$C_3$}}; 
\node[inner sep=0pt, shape = circle] (A4) at (0,2.5) {\tiny{$A_4$}};
\node[inner sep=0pt, shape = circle] (B4) at (-2.5,0) {\tiny{$B_4$}}; 
\node[inner sep=0pt, shape = circle] (C4) at (2.5,0) {\tiny{$C_4$}}; 

\fill (0, 0.8) circle (1.5pt);
\fill (0,1.3) circle (1.5pt);
\fill (0.775,0.01) circle (1.5pt);
\fill (-0.775,0.01) circle (1.5pt);
\fill (1.3,0.01) circle (1.5pt);
\fill (-1.3,0.01) circle (1.5pt);

\fill (0, 1.8) circle (1.5pt);
\fill (0,2.3) circle (1.5pt);
\fill (1.8,0.01) circle (1.5pt);
\fill (-1.8,0.01) circle (1.5pt);
\fill (2.3,0.01) circle (1.5pt);
\fill (-2.3,0.01) circle (1.5pt);

\draw[thick, dashed] (A.south) to (B2.east);
\draw[thick] (B2.east) to[bend right=45] (C2.west);
\draw[thick] (C2.west) to (A2.south);
\draw[thick, dashed] (A2.south) to (B3.east);
\draw[thick] (B3.east) to[bend right=45] (C3.west);
\draw[thick] (C3.west) to (A3.south);
\draw[thick, dashed] (A3.south) to (B4.east);
\draw[thick] (B4.east) to[bend right=45] (C4.west);
\draw[thick] (C4.west) to (A4.south);
\draw[thick, dashed] (A4.south) to (B.east);
\draw[thick] (B.east) to (C.west);
\draw[thick] (C.west) to (A.south);
\end{tikzpicture}
\end{center}

\end{example}

\begin{example}
\label{another example}
{We consider two different voltage groups $G_1=\Z/2\Z$ and $G_2=\Z/4\Z$.}
Once again there is a unique edge (which we call $e_0$) with non-trivial voltage assignment, i.e., $\alpha(e_0) = \tau$, {where $\tau$ is a generator of $G_1$ and $G_2$, respectively}.
The tower of derived graphs $Y= X_1 = X(\Z/2\Z, \tau)$ and $Y=X_2 = X(\Z/4\Z, \tau)$ is drawn below
\begin{center}
\begin{tikzpicture}[scale=1]
\node[inner sep=0pt, shape = circle] (A) at (-1.5,0) {\tiny{$A$}};
\node[inner sep=0pt, shape = circle] (B) at (1.5,0) {\tiny{$B$}}; 

\fill (1.35, 0) circle (1.5pt);
\fill (-1.35, 0) circle (1.5pt);

\draw[thick, mid arrow] node[above]{$\tau$} (A.east) to[bend left] (B.west);
\draw[thick, mid arrow, red] (A.east) to[bend right] (B.west);
\draw[thick, mid arrow, dashed] (A.east) to (B.west);
\end{tikzpicture}
\hspace{1cm}
\begin{tikzpicture}[scale=1]
\node[inner sep=0pt, shape = circle] (A) at (-1.25,0) {\tiny{$A_1$}};
\node[inner sep=0pt, shape = circle] (B) at (1.25,0) {\tiny{$B_1$}}; 
\node[inner sep=0pt, shape = circle] (B2) at (-1.25,1) {\tiny{$B_2$}};
\node[inner sep=0pt, shape = circle] (A2) at (1.25,1) {\tiny{$A_2$}}; 

\fill (-1.05, 0) circle (1.5pt);
\fill (1.05,0) circle (1.5pt);
\fill (-1.05,1) circle (1.5pt);
\fill (1.05,1) circle (1.5pt);

\draw[thick, red] (A.east) to[bend right=10] (B.west);
\draw[thick, dashed] (A.east) to (B2.east);
\draw[thick, dashed] (A2.west) to (B.west);
\draw[thick] (A.east) to[bend left=10] (B.west);
\draw[thick, red] (A2.west)[bend right=10] to (B2.east);
\draw[thick] (A2.west)[bend left=10] to (B2.east);
\end{tikzpicture}
\hspace{1cm}
\begin{tikzpicture}[scale=1]
\node[inner sep=0pt, shape = circle] (A) at (-1.25,0) {\tiny{$A_1$}};
\node[inner sep=0pt, shape = circle] (B) at (1.25,0) {\tiny{$B_1$}}; 
\node[inner sep=0pt, shape = circle] (B2) at (-1.25,0.75) {\tiny{$B_2$}};
\node[inner sep=0pt, shape = circle] (A2) at (1.25,0.75) {\tiny{$A_2$}}; 
\node[inner sep=0pt, shape = circle] (A3) at (-1.25,1.5) {\tiny{$A_3$}};
\node[inner sep=0pt, shape = circle] (B3) at (1.25,1.5) {\tiny{$B_3$}}; 
\node[inner sep=0pt, shape = circle] (B4) at (-1.25,2.25) {\tiny{$B_4$}};
\node[inner sep=0pt, shape = circle] (A4) at (1.25,2.25) {\tiny{$A_4$}}; 

\fill (-1.05, 0) circle (1.5pt);
\fill (1.05,0) circle (1.5pt);
\fill (-1.05,0.75) circle (1.5pt);
\fill (1.05,0.75) circle (1.5pt);

\fill (-1.05, 1.5) circle (1.5pt);
\fill (1.05, 1.5) circle (1.5pt);
\fill (-1.05, 2.25) circle (1.5pt);
\fill (1.05, 2.25) circle (1.5pt);

\draw[thick, dashed] (A.east) to (B2.east);
\draw[thick, dashed] (A2.west) to (B3.west);
\draw[thick, dashed] (A3.east) to (B4.east);
\draw[thick, dashed] (A4.west) to[bend left=40] (B.west);
\draw[thick, red] (A.east) to[bend right=10] (B.west);
\draw[thick] (A.east) to[bend left=10] (B.west);
\draw[thick, red] (A2.west) to[bend right=10] (B2.east);
\draw[thick] (A2.west) to[bend left=10] (B2.east);
\draw[thick, red] (A3.east) to[bend right=10] (B3.west);
\draw[thick] (A3.east) to[bend left=10] (B3.west);
\draw[thick] (B4.east) to[bend right=10] (A4.west);
\draw[thick, red] (B4.east) to[bend left=10] (A4.west);
\end{tikzpicture}
\end{center}

{Here we denote} the edges of the base graph $X$ by $e_b$ (for the black edge), $e_r$ (for the red edge), and $e_0$ (for the dashed edge with the non-trivial voltage assignment).
\end{example}

The next result no longer requires the voltage assignment to be unramified.
In other words, the derived graph we obtain is a (ramified) branched cover.

\begin{corollary}
Let $X$ be a finite planar graph and let $Y$ be the derived graph of a single voltage assignment.
Then $Y$ is planar.
\end{corollary}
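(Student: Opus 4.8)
The plan is to reduce the ramified case to the unramified one handled in Theorem~\ref{thm:planar}. The key observation is that a voltage assignment producing a \emph{ramified} derived graph differs from the unramified situation only in how the fiber over the distinguished edge $e_0$ is glued: instead of the cyclic gluing $v_{\alpha(e_0)^i}\to w_{\alpha(e_0)^{i+1}}$ that builds a single long cycle, a ramified vertex forces the fiber of $e_0$ over the relevant endpoint to be ``folded'' onto fewer vertices (the ramification index divides $|G|$). So first I would set up notation exactly as in the proof of Theorem~\ref{thm:planar}: let $e_0$ be the unique edge with non-trivial voltage $\alpha(e_0)$, let $v,w$ be its endpoints, let $X'=X\setminus\{e_0\}$, and let $Y'$ be the $|G/H|$-many (or $|G|$-many) copies of $X'$ indexed by the relevant cosets, where $H$ records the subgroup by which $w$ (or $v$) is ramified. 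Since $X'$ is planar, $Y'$ is planar, and we only have to check that reattaching the $e_0$-fiber keeps planarity.

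Next I would use a stereographic projection to place $e_0$ on the outer face of $X$, hence each copy of $X'$ in $Y'$ can be drawn with $v$ and $w$ accessible from the outer face; arrange the sheets concentrically (or side by side in a row, as in Example~\ref{unramified derived graph} and Example~\ref{another example}). The edges added to form $Y$ from $Y'$ all run between consecutive sheets, each connecting a $v$-copy to a $w$-copy; because $v$ and $w$ sit on the outer boundary of every sheet, these connecting edges can be routed in the outer region without crossings — this is exactly the argument already used in Theorem~\ref{thm:planar}. The only new wrinkle is that in the ramified case several of these connecting edges may land on the \emph{same} $w$-vertex (the folding), but multiple edges terminating at one boundary vertex from distinct boundary points on the outside still admit a planar routing (one may even appeal to the fact that a graph obtained from a planar graph by identifying two vertices lying on a common face remains planar). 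Hence $Y$ is planar.

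I expect the main obstacle to be purely bookkeeping: one has to be careful that ``single voltage assignment'' in the ramified setting still means exactly one edge carries non-trivial voltage, so that $X'$ is genuinely all of $X$ minus one edge and remains planar, and that the subgroups $I_v$ attached to the ramified vertices really do only affect the vertex set of the fiber and not the combinatorics of $X'$. Once that is pinned down, invoking Theorem~\ref{thm:planar} together with the observation that folding/identifying boundary vertices preserves planarity finishes the proof; no serious new idea beyond Theorem~\ref{thm:planar} is needed, which is presumably why the authors state it as a corollary.
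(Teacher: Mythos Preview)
Your approach has a genuine gap rooted in a misconception about ramification. You write that a ramified voltage assignment ``differs from the unramified situation only in how the fiber over the distinguished edge $e_0$ is glued,'' and accordingly describe $Y' = Y \setminus (\text{fiber of } e_0)$ as a disjoint union of copies of $X'$. This is false: ramification at a vertex $v$ collapses the \emph{entire} vertex fiber $\{v\}\times G$ to $\{v\}\times G/I_v$, which affects every edge incident to $v$, not just $e_0$. Concretely, in Example~\ref{ramified version of 3.4} the vertex $C$ is totally ramified but is not an endpoint of $e_0=AB$; removing the $e_0$-fiber from $Y=X_1$ leaves a connected graph on the four vertices $A_1,A_2,B,C$ (with $A_1C$, $A_2C$, and two $BC$ edges), not two disjoint copies of the path $A$--$C$--$B$. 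So your description of $Y'$ breaks down as soon as any vertex of positive degree in $X'$ is ramified, and the rest of your outer-face gluing argument no longer applies.

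The paper's argument sidesteps this entirely: by Theorem~\ref{thm:planar} the unramified derived graph $Y^{\unr}$ is planar, and $Y$ is obtained from $Y^{\unr}$ via the immersion $\iota\colon Y^{\unr}\to Y$, which identifies, for each ramified vertex $v$ and each coset $gI_v$, the vertices $\{(v,g'):g'\in gI_v\}$. The paper records this as ``$\iota$ is a contraction and contractions of planar graphs are planar.'' Your parenthetical remark about identifying two vertices lying on a common face is in fact the germ of exactly this argument --- but you deploy it only as a patch for the $e_0$-endpoints rather than recognizing that passing from $Y^{\unr}$ to $Y$ globally \emph{is} the whole proof.
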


\begin{proof}
Let $Y^{\unr}$ be the corresponding unramified derived graph.
Note that the immersion $\iota \colon Y^{\unr}\to Y$ is a contraction and that contractions of planar graphs are planar. 
\end{proof}

\begin{example}
\label{ramified version of 3.4}
We once again start with the same graph $X$ as in Example~\ref{unramified derived graph}.  
\begin{center}
\begin{tikzpicture}[scale=1]
\node[inner sep=0pt, label = above:\tiny{$A$}] (A) at (0,1.5) {};
\node[inner sep=0pt, label = left:\tiny{$B$}] (B) at (-1.5,0) {}; 
\node[inner sep=0pt, label = right:\tiny{$C$}] (C) at (1.5,0) {}; 
\node[inner sep=0pt, label = left:\tiny{$\tau$}] (T) at (-0.75,0.75) {};

\fill (0,1.5) circle (1.5pt);
\fill (1.5,0) circle (1.5pt);
\fill (-1.5,0) circle (1.5pt);

\draw[thick, mid arrow, dashed] (A.south) to (B.east);
\draw[thick, mid arrow] (C.west) to (A.south);
\draw[thick, mid arrow] (B.east) to (C.west);
\end{tikzpicture}
\end{center}
We no longer require that the vertices are unramified in this extension.
In fact, we assume that the vertices $B,C$ are totally ramified, i.e., the corresponding inertia groups are isomorphic to $G$ at each layer.
The derived graph $Y=X_1=X(\Z/2\Z, \mathcal{I}, \tau)$ is the following
\begin{center}
\begin{tikzpicture}[scale=0.75]
\node[inner sep=0pt, label = above:\tiny{$A_1$}] (A) at (0,1.25) {};
\node[inner sep=0pt, label = left:\tiny{$B$}] (B) at (-1.5,0) {}; 
\node[inner sep=0pt, label = right:\tiny{$C$}] (C) at (1.5,0) {}; 
\node[inner sep=0pt, label = above:\tiny{$A_2$}] (A') at (0,2) {};

\fill (0,1.25) circle (1.5pt);
\fill (0,2) circle (1.5pt);
\fill (1.5,0) circle (1.5pt);
\fill (-1.5,0) circle (1.5pt);

\draw[thick,dashed] (A') to (B);
\draw[thick] (C) to (A');
\draw[thick,dashed] (A) to (B);
\draw[thick] (C) to (A);
\draw[thick] (B) to[bend left=20] (C);
\draw[thick] (B) to[bend right=20] (C);
\end{tikzpicture}
\end{center}
This is easily seen to be a contraction of the hexagon obtained in Example~\ref{unramified derived graph}.
The vertices $B_1$ and $B_2$ (resp. $C_1$ and $C_2$) have been identified.

Now, when $G\simeq (\Z/2^2\Z) = \Z/4\Z$, the derived graph $Y= X(\Z/4\Z, \mathcal{I}, \tau)$ we obtain is the following
\begin{center}
\begin{tikzpicture}[scale=0.8]
\node[inner sep=0pt, label = above:\tiny{$A_1$}] (A) at (0,1.25) {};
\node[inner sep=0pt, label = left:\tiny{$B$}] (B) at (-1.5,0) {}; 
\node[inner sep=0pt, label = right:\tiny{$C$}] (C) at (1.5,0) {}; 
\node[inner sep=0pt, label = above:\tiny{$A_2$}] (A') at (0,2) {};
\node[inner sep=0pt, label = above:\tiny{$A_3$}] (A'') at (0,2.75) {};
\node[inner sep=0pt, label = above:\tiny{$A_4$}] (A''') at (0,3.5) {};

\fill (0,1.25) circle (1.5pt);
\fill (0,2) circle (1.5pt);
\fill (0,2.75) circle (1.5pt);
\fill (0,3.5) circle (1.5pt);
\fill (1.5,0) circle (1.5pt);
\fill (-1.5,0) circle (1.5pt);

\draw[thick,dashed] (A'.south) to (B.east);
\draw[thick] (C.west) to (A'.south);
\draw[thick,dashed] (A''.south) to (B.east);
\draw[thick] (C.west) to (A''.south);
\draw[thick,dashed] (A'''.south) to (B.east);
\draw[thick] (C.west) to (A'''.south);
\draw[thick,dashed] (A.south) to (B.east);
\draw[thick] (C.west) to (A.south);
\draw[thick] (B.east) to[bend left] (C.west);
\draw[thick] (B.east) to[bend right] (C.west);
\draw[thick] (B.east) to[bend left=15] (C.west);
\draw[thick] (B.east) to[bend right=15] (C.west);
\end{tikzpicture}
\end{center}
This is a contraction of the cyclic 12-gon graph above in Example~\ref{unramified derived graph}.
\end{example}

\section{Towers of Derived Graphs}
\label{sec: towers of derived graphs}

Throughout this section, we assume that $\mathcal{G}\cong \Z_p^d$.
{We write $\gamma_1, \ldots, \gamma_d$ to denote the topological generators of $\cG$.}
By \cite[p.~1336]{dubose-vallieres}, we know that all unramified $\Z_p^d$-towers arise as derived graphs.

\begin{lemma}
\label{groups}
Let $I\subset \mathcal{G}$ be a closed subgroup {and $\sigma\in I$}.
Let $s_n(\sigma)$ denote the order of the image of $\sigma$ in $\mathcal{G}/\mathcal{G}^{p^n}$.
If $s_n(\sigma)\neq 1$, then \[s_{n+1}(\sigma)=ps_n(\sigma).
\]
\end{lemma}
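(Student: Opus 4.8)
The plan is to work inside the elementary group $\mathcal{G}\cong\Z_p^d$ and translate the statement about orders of images of $\sigma$ under the reduction maps $\mathcal{G}\twoheadrightarrow\mathcal{G}/\mathcal{G}^{p^n}$ into a statement about $p$-adic valuations of coordinates. First I would fix the isomorphism $\mathcal{G}\cong\Z_p^d$ and write $\sigma=(a_1,\dots,a_d)$ with $a_i\in\Z_p$; then $\mathcal{G}^{p^n}=(p^n\Z_p)^d$, so $\mathcal{G}/\mathcal{G}^{p^n}\cong(\Z/p^n\Z)^d$ and the image of $\sigma$ is $(a_1\bmod p^n,\dots,a_d\bmod p^n)$. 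The order of this image in the finite abelian $p$-group $(\Z/p^n\Z)^d$ is $p^{\max(0,\,n-m)}$ where $m=\min_i \ord_p(a_i)=\ord_p(\sigma)$ (with the convention $m=\infty$ if $\sigma=0$). The key step is simply to record this formula: $s_n(\sigma)=p^{\max(0,\,n-m)}$.

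Given that formula, the conclusion is immediate. If $s_n(\sigma)\neq 1$, then $\max(0,n-m)\ge 1$, i.e. $n>m$ and hence $n\ge m+1$, so $n+1>m$ as well; therefore $s_{n+1}(\sigma)=p^{\,n+1-m}=p\cdot p^{\,n-m}=p\cdot s_n(\sigma)$. This handles every case, including $\sigma$ of infinite order in $\mathcal{G}$ (i.e. $\sigma\neq 0$): there $m<\infty$ is finite, and the hypothesis $s_n(\sigma)\neq1$ forces $n>m$, after which the doubling relation holds. The degenerate case $\sigma=0$ never satisfies the hypothesis $s_n(\sigma)\neq1$, so nothing needs to be said there. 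I would also remark that the assumption that $I$ is a closed subgroup plays no real role: it suffices that $\sigma\in\mathcal{G}$; the restriction to $\sigma\in I$ is just how the lemma will be applied.

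The main (very minor) obstacle is purely bookkeeping: being careful about the case distinction $n\le m$ versus $n>m$ in the order formula, and about the convention for $\ord_p(\sigma)$ when $\sigma$ generates an infinite subgroup so that $\ord_p(\sigma)$ refers to $\min_i\ord_p(a_i)$ rather than to a literal order of $\sigma$ in $\mathcal{G}$. Once the formula $s_n(\sigma)=p^{\max(0,n-\ord_p(\sigma))}$ is on the table, there is no real content left. An alternative, essentially equivalent route that avoids choosing coordinates is to use the exact sequence $0\to\mathcal{G}^{p^n}/\mathcal{G}^{p^{n+1}}\to\mathcal{G}/\mathcal{G}^{p^{n+1}}\to\mathcal{G}/\mathcal{G}^{p^n}\to0$ together with the observation that multiplication by $p$ on $\mathcal{G}\cong\Z_p^d$ sends $\mathcal{G}^{p^n}$ into $\mathcal{G}^{p^{n+1}}$ isomorphically; this shows that for $\sigma$ whose image in $\mathcal{G}/\mathcal{G}^{p^n}$ is nontrivial, the order can go up by at most a factor $p$ from level $n$ to level $n+1$, and the reverse inequality $s_{n+1}(\sigma)\ge s_n(\sigma)$ is automatic because $\mathcal{G}/\mathcal{G}^{p^n}$ is a quotient of $\mathcal{G}/\mathcal{G}^{p^{n+1}}$; combined with the fact that $s_n(\sigma)\neq1$ forces $p\sigma\notin\mathcal{G}^{p^{n+1}}$ when... — but the coordinate computation is cleaner, so that is the version I would write up.
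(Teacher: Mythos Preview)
Your proposal is correct and is essentially the same argument as the paper's: both reduce to the formula $s_n(\sigma)=p^{\,n-b}$ where $b=\min_i\ord_p(a_i)$, and then observe that once $n>b$ this forces $s_{n+1}(\sigma)=ps_n(\sigma)$. The only cosmetic difference is that the paper works with compatible finite-level representatives $a_{i,m}$ of the coordinates (so it must argue separately that $b_{n+1}=b_n$ via the congruence $a_{i,n+1}\equiv a_{i,n}\pmod{p^n}$), whereas you use the $p$-adic coordinates of $\sigma$ directly and get the stability of $b$ for free; your write-up is marginally cleaner for that reason.
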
 

\begin{proof}
For every $1 \le i \le d$ and $m\geq 0$, there exist integers $1 \leq a_{i,m} \leq  p^m$ such that 
\[\sigma\equiv \prod_{i=1}^d\gamma_i^{a_{i,m}}\pmod{ \mathcal{G}^{p^m}} \textrm{ and } a_{i,m}\equiv a_{i,m+1}\pmod {p^{m}}.
\]
Set $b_m=\min_i(\ord_p(a_{i,m}))$.
By definition $s_m(\sigma)=p^{m-b_m}$.

Let $n$ be a fixed integer satisfying $s_n(\sigma)>1$.
This is equivalent to $b_n<n$.
This inequality, combined with the congruence condition mod ${p^{n}}$ ensures that $b_{n+1}=b_n$; hence $s_{n+1}(\sigma)=ps_n(\sigma)$. 
\end{proof}

{Let $I\subset \mathcal{G}$ be a subgroup of rank $k$. 
The definition in the footnote wouldn't make sense for this?
Also, elsewhere (like Theorem~4.16) we just write $\rk(I_v)$.}
In other words, there are $k$-many topological generators of $I$, say $\sigma_{1},\dots , \sigma_{k}$.
{Let $n\geq 0$ and denote by $I\mathcal{G}^{p^n}$ the smallest subgroup of $\mathcal{G}$ containing $\mathcal{G}^{p^n}$ and $I$. 
Note that $\mathcal{G}^{p^n}$ is a normal subgroup of $\cG$.
Thus, the quotient $I\mathcal{G}^{p^n}/\mathcal{G}^{p^n}$ is well defined.
Fix an integer $n_{{I}}$ such that the $p$-rank\footnote{For an abelian group $G$, we define its $p$-rank to be the $\Z/p\Z$-dimension of $G[p]$.} of $I\mathcal{G}^{p^{n_{I}}}/\mathcal{G}^{p^{n_{I}}}$ is equal to $k$.
Lemma~\ref{groups} now implies that the $p$-rank of $I\mathcal{G}^{p^n}/\mathcal{G}^{p^n}$ is equal to $k$ for all $n\ge n_{{I}}$.}
Thus, given such a group $I\subset \mathcal{G}$ of {rank} $k$ for all $n\ge n_0$ we have 
\[
I\mathcal{G}^{p^n}/\mathcal{G}^{p^n} = \bigoplus_{t=1}^k C_{n,t},
\]
where $C_{n,t}$ is the cyclic subgroup of $\mathcal{G}/\mathcal{G}^{p^n}$ generated by the image of $\sigma_{t}$.

Define $\vec{\mathbf{j}} = (j_1, \ldots, j_k)$ where for each $1 \le t \le k$, the entry $j_t$ can take all (non-negative) integral values between 0 and $s_n(\sigma_{t}) -1$ (both included).
For $n\ge {n_{0}}$, define
\begin{equation}
\label{omega v n defn}
\bomega_{{I},n} = \sum_{\vec{\mathbf{j}}}\prod_{t=1}^k \sigma_{t}^{j_t}\in \Lambda.
\end{equation}

{Intuitively the element $\bomega_{I,n}$ is the sum of all elements in $I\mathcal{G}^{p^n}/\mathcal{G}^{p^n}$.
It therefore can be understood as the norm of the subgroup $I$ in $\mathcal{G}/\mathcal{G}^{p^n}$.
In most cases where the above definition/notation will be used, the subgroup $I$ will be an inertia subgroup $I_v$ of a vertex $v$ in a graph.
In such cases, we will use the notation $\bomega_{v,n}$ instead of $\bomega_{I_v,n}$.}

\begin{definition}
\label{Pv}
Let $X$ be a finite graph with vertices $v_1,\dots,v_s$ and voltage assignment $\alpha$.
Let $X_\infty$ be a branched cover of $X$.
Label the vertices which are unramified in $X_\infty/X$ as $v_1,\dots, v_r$ and the ones which ramify ones as $v_{r+1},\dots ,v_s$.
For each vertex $v$, define
\begin{align*}
P_v & =\deg(v)(v,{I_v})-\sum_{e\in \E^o_{v}(X)}(t(e),\alpha(e)I_{t(e)})\in \Div(X_\infty)
\end{align*}

\noindent $\bullet$
Define
\[
{\Pr}_\Lambda^{\unr}(X_\infty) := \Lambda\textrm{-submodule of } \Div_\Lambda(X_\infty) \textrm{ generated by } \{P_{v_j} \mid 1 \le j \le r\}.
\]

\noindent $\bullet$  
Set
\[
n_0=\displaystyle\max_{r+1 \le j \le s} n_{v_j}
\]
{where $n_{v_j} = n_{I_{v_j}}$ is defined as before for the inertia group $I_{v_j}$ at the vertex $v_j$.}
For each $n\ge n_0$, define
\[
{\Pr}_{n}^{\ram}(X_\infty) = \Lambda\textrm{-submodule of } \Div_\Lambda(X_\infty) \textrm{ generated by } \bomega_{v_j,n}P_{v_j} \textrm{ for } r+1\le j \le s.
\]
\end{definition}

\begin{remark}
For the unramified vertices these are the generators one would expect for the principal divisors.
The factors $\bomega_{v_j,n}$ occur for the ramified vertices due to the immersion map $X_\infty^{\unr}\to X_\infty$ as we will see in the following proof.
\end{remark}


\subsection{Picard Group of Ramified Derived Graphs}

In this section, we prove the $d$-dimensional analogue of \cite[Theorem~5.4]{GV24}.
Let $G$ be any group,  let $Y=X(G, \mathcal{I}, \alpha)$ denote the derived graph of $X$ and write $Y^{\unr} = X(G,\alpha)$.
Let 
\[
\iota \colon Y^{\unr} \longrightarrow Y
\]
be the immersion.
There is a natural surjective group morphism 
\[
\iota_{*} \colon \Div(Y^{\unr}) \longrightarrow \Div(Y).
\]

\begin{theorem} \label{thm:projection}
Let $X_\infty = X(\Z_p^d, \mathcal{I}, \alpha)$ denote the (ramified) derived graph of $X$ and $X_\infty^{\unr} = X(\Z_p^d, \alpha)$ be the unramified derived graph.
Denote the respective layers of the derived graph as $X_n$ and $X_n^{\unr}$.
Define 
\[
N_n={\Pr}_\Lambda^{\unr}(X_\infty) + (\omega_n(T_1),\dots, \omega_n(T_d))\Div_\Lambda(X_\infty) + {\Pr}_n^{\ram}(X_\infty),
\]
where $\omega_n(T_i) = {(T_i+1)}^{p^n} -1$ for $1\le i \le d$.
For $n \gg 0$,
\[
\Div_\Lambda(X_\infty)/N_n\cong \Pic(X_n){\otimes \Z_p}.
\]
\end{theorem}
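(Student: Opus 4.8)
The plan is to compute $\Pic(X_n) \otimes \Z_p$ directly as a quotient of $\Div_\Lambda(X_\infty)$, by unpacking the three groups of generators in $N_n$ and matching them against the definition $\Pic(X_n) = \Div(X_n)/\Pr(X_n)$. First I would recall that for a fixed $n$, there is a natural identification $\Div_\Lambda(X_\infty)/(\omega_n(T_1),\dots,\omega_n(T_d))\Div_\Lambda(X_\infty) \cong \Div(X_n) \otimes \Z_p$; this is because $\Lambda/(\omega_n(T_1),\dots,\omega_n(T_d)) \cong \Z_p[\cG_n]$ and $\Div_\Lambda(X_\infty) = \Div(X_\infty)\otimes_{\Z_p[\cG]}\Lambda$ becomes $\Div(X_\infty)\otimes_{\Z_p[\cG]}\Z_p[\cG_n]$, which is the free $\Z_p[\cG_n]$-module on the vertices $v_1,\dots,v_s$ of $X$, i.e. the divisor group (tensored with $\Z_p$) of the quotient graph with the correct multiplicities — one has to be a little careful here because of the ramified vertices, but the point is that $\Div(X_\infty)$ is already "the free module on unramified-vertex-orbits plus ramified vertices," so descending along $\omega_n$ lands in $\Div(X_n)\otimes \Z_p$ once $n$ is large enough that the ramification pattern is stable. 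So it suffices to show that, inside $\Div(X_n)\otimes \Z_p$, the image of ${\Pr}_\Lambda^{\unr}(X_\infty) + {\Pr}_n^{\ram}(X_\infty)$ equals $\Pr(X_n)\otimes\Z_p$.

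Next I would treat the two types of generators separately. For an unramified vertex $v_j$ ($1\le j\le r$), the element $P_{v_j} = \deg(v_j)(v_j,I_{v_j}) - \sum_{e\in\E^o_{v_j}}(t(e),\alpha(e)I_{t(e)})$ maps, under $\Lambda$-multiplication followed by reduction mod $\omega_n$, to exactly the family of principal divisors $\operatorname{div}(\psi_{\tilde v})$ attached to the vertices $\tilde v$ of $X_n$ lying over $v_j$ — this is the standard computation already present in the unramified case (cf. \cite{Kleine-Mueller4, GV24}): the $\Z_p[\cG_n]$-orbit of $P_{v_j}$ is precisely the set of Laplacian columns at the fiber over $v_j$. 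The subtle point is the ramified vertices $v_{r+1},\dots,v_s$: over such a $v_j$ there is only one vertex in $X_n$ (for $n\ge n_0$, by Lemma~\ref{lemma:number of vertices}), and the principal divisor at that vertex in $X_n$ is $\operatorname{div}(\psi_{v_j})$, whose coefficient at $v_j$ counts $|\E^o_{v_j}(X_n)|$. Here I would show that $\bomega_{v_j,n}P_{v_j}$, reduced mod $\omega_n$, is exactly this principal divisor: multiplying $P_{v_j}$ by $\bomega_{v_j,n}$ — the "norm" element of the inertia subgroup $I_{v_j}$ in $\cG_n$ — has the effect of summing over the $|I_{v_j}\cG^{p^n}/\cG^{p^n}|$-many edges in $X_n$ emanating from the (single) vertex over $v_j$ that all come from the same edge $e$ of $X$, and the $\deg(v_j)\bomega_{v_j,n}(v_j,I_{v_j})$ term gives the correct diagonal valency; the off-diagonal terms $\sum_e \bomega_{v_j,n}(t(e),\alpha(e)I_{t(e)})$ reduce to $-\#(\text{undirected edges from }w\text{ to }v_j\text{ in }X_n)\cdot w$ for the relevant neighbors $w$. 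The reason the extra factor $\bomega_{v_j,n}$ is needed — and the reason the immersion $\iota\colon X_\infty^{\unr}\to X_\infty$ enters — is exactly as flagged in the Remark after Definition~\ref{Pv}: in $X_\infty^{\unr}$ the vertex $v_j$ has $|I_{v_j}|$-many preimages that all get contracted to one in $X_\infty$, so the principal divisor there is $\iota_*$ of a sum of $|I_{v_j}\cG^{p^n}/\cG^{p^n}|$ principal divisors upstairs, which is $\bomega_{v_j,n}P_{v_j}$.

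Putting these together: modulo $\omega_n$, the submodule $N_n/(\omega_n)\Div_\Lambda(X_\infty)$ is generated over $\Z_p[\cG_n]$ by the principal divisors $\operatorname{div}(\psi_{\tilde v})$ for all $\tilde v\in V(X_n)$, which is precisely $\Pr(X_n)\otimes\Z_p$, and hence $\Div_\Lambda(X_\infty)/N_n \cong (\Div(X_n)\otimes\Z_p)/(\Pr(X_n)\otimes\Z_p) = \Pic(X_n)\otimes\Z_p$, using right-exactness of $-\otimes\Z_p$ over $\Z$ (or flatness of $\Z_p$). The hypothesis "$n\gg 0$" is used in two places: to guarantee (via $n\ge n_0 = \max_j n_{v_j}$ and Lemma~\ref{groups}) that the $p$-rank of each $I_{v_j}\cG^{p^n}/\cG^{p^n}$ has stabilized so that $\bomega_{v_j,n}$ really is the norm of the full inertia image, and (via Lemma~\ref{lemma:number of vertices}) that the ramified vertices of $X_n$ are in bijection with those of $X$ so that $\Div(X_n)$ has the expected shape. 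The main obstacle I anticipate is the bookkeeping in the ramified case: one must carefully verify that the edges of $X_n$ out of the unique vertex over a ramified $v_j$ are correctly enumerated by $\{(e,g) : g\in\cG^{p^n}\backslash\cG, e\in\E^o_{v_j}(X)\}$ modulo the inertia identification, and that $\bomega_{v_j,n}P_{v_j}$ does not accidentally produce relations that are \emph{stronger} than the principal divisor at that vertex — i.e. that the reduction is an equality of $\Z_p[\cG_n]$-submodules and not merely an inclusion. I would handle this by working out $\operatorname{div}(\psi_{v_j})$ in $X_n$ from the definition of the incidence data of a derived graph and comparing term by term with the reduction of $\bomega_{v_j,n}P_{v_j}$.
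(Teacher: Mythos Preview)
Your approach is essentially the paper's: both arguments factor through the surjection $\pi_n\colon \Div_\Lambda(X_\infty)\to \Div(X_n)\otimes\Z_p$, identify its kernel as $(\omega_n(T_1),\dots,\omega_n(T_d))\Div_\Lambda(X_\infty)$ via the decomposition $\Div_\Lambda(X_\infty)\cong \Lambda^r\oplus\bigoplus_{l>r}\Lambda/\textup{Aug}(I_{v_l})$, and then match generators of $\Pr(X_n)\otimes\Z_p$. The paper packages the ramified computation by citing \cite[Corollary~4.7]{GV24}, whereas you unpack it directly; that is a cosmetic difference.

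There is one concrete slip. You assert that over a ramified vertex $v_j$ of $X$ ``there is only one vertex in $X_n$ (for $n\ge n_0$, by Lemma~\ref{lemma:number of vertices})''. This is false unless $I_{v_j}=\cG$: the fibre over $v_j$ in $X_n$ is $\{v_j\}\times \cG_n/\pi_n(I_{v_j})$, which has $[\cG_n:\pi_n(I_{v_j})]$ elements, and Lemma~\ref{lemma:number of vertices} says nothing to the contrary (it only asserts that the \emph{number of ramified vertices} stabilizes, not that each fibre is a singleton). The fix is immediate and does not disturb your strategy: $\bomega_{v_j,n}P_{v_j}$ reduces to the principal divisor at the single vertex $(v_j,\pi_n(I_{v_j}))$ in $X_n$ (this is exactly the content of \cite[Corollary~4.7]{GV24}), and then the $\Z_p[\cG_n]$-orbit of this element supplies the principal divisors at the remaining vertices in the fibre. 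Your edge-count heuristic (``summing over the $|I_{v_j}\cG^{p^n}/\cG^{p^n}|$-many edges'') is correct once restricted to this one vertex. With that correction your last paragraph's concern --- that the image is an equality of submodules and not merely an inclusion --- is resolved exactly as in the paper: the $\cG_n$-orbits of the $P_{v_j}$ (unramified) and $\bomega_{v_j,n}P_{v_j}$ (ramified) hit a full generating set for $\Pr(X_n)\otimes\Z_p$, and they visibly lie in $\pi_n(N_n)$.
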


\begin{proof}
Fix a layer $X_n$ and consider the projection map
\[
\pi_n \colon X_\infty \longrightarrow X_n. 
\]
This induces a surjective map on the divisors, namely
\[
\pi_n \colon \Div_{\Lambda}(X_\infty) \longrightarrow \Div(X_n){\otimes \Z_p}.
\]
To complete the proof of the theorem, it is sufficient to show that $\pi_n^{-1}(\Pr(X_n){\otimes \Z_p}) = N_n$.

{Consider a vertex $v\in V(X)$. 
We fix a pre-image $v'=(v,1)$ of $v$ in $V(X_n^{\unr})$.
Associated to this $v'$, define a principal divisor 
\[
P_v^n=\deg(v)(v,1)-\sum_{e\in \E^o_{v'}(X_n^{\textup{unr}})}t(e) \in \Div(X_n^{\unr}).
\]}
Define
\[
P_{v,n} = \iota_{*}(P^n_v).
\]
A similar definition can be made over $X^{\textup{unr}}_\infty$.
{To simplify notation we write $P_v$ instead of $P_{v,\infty}$ (compare with Definition \ref{Pv}).
Note that $P^\infty_{v}\in \Div(X_\infty^{\unr})$, while $P_v = \iota_*(P_v^{\infty}) \in \Div(X_\infty)$.}
For each vertex $v$ in $X_n$ , we have
\[
\pi_n(P_{v}) =P_{v,n}.
\]
Recall that associated to each ramified vertex $v$, we can associate a{n inertia} group $I_v \subseteq \Z_p^d$ of rank $k = k(v)$.
Let $n\geq n_{0,v}$ where $n_{0,v}$ is a large enough integer such that $\rk_p(I_v\mathcal{G}/\mathcal{G}^{p^{n_{0,v}}})=\rk_p(I_v)$.
Then
\begin{align*}
\pi_n(\bomega_{v,n}P_{v}) & =\sum_{\vec{\mathbf{j}}}\prod_{t=1}^k\sigma_{v,t}^{j_t}P_{v,n}=\sum_{\sigma \in I_v\mathcal{G}^{p^n}/ \mathcal{G}^{p^n}}\sigma P_{v,n}
\end{align*}
and \cite[Corollary 4.7]{GV24} implies that $\pi_n(N_n)=\Pr(X_n){\otimes \Z_p}$. Therefore 
\[
N_n\subset \pi_n^{-1}(\Pr(X_n){\otimes \Z_p}) \textrm{ and } \pi_n^{-1}(\Pr(X_n){\otimes \Z_p})=N_n+\ker(\pi_n).
\]

Write $\textup{Aug}(I_{v_l})$ to denote the augmentation ideal of $I_{v_l}$.
The arguments in \cite[Section 5]{GV24} now show
\[\Div_\Lambda(X_\infty)=\Lambda^r\oplus \bigoplus_{l=r+1}^s \Lambda/\textup{Aug}(I_{v_l}).
\]
Taking appropriate projections,
\[
\Div(X_n)\cong \Z_p[\mathcal{G}/\mathcal{G}^{p^n}]^r\oplus \bigoplus_{l=r+1}^s \Z_p[\mathcal{G}/I_{v_l}\mathcal{G}^{p^n}].
\]
Therefore for $n\gg 0$,
\[
\ker(\pi_n)=(\omega_n(T_1),\dots,\omega_n(T_d))\Div_\Lambda(X_\infty).
\]
In particular, $\ker(\pi_n)\subset N_n$ and $\pi_n^{-1}(\Pr(X_n){\otimes \Z_p})=N_n$. 

\end{proof}

Even though the module $\Pr_n^{\ram}(X_\infty)$ is necessary to describe $\Pic(X_n){\otimes \Z_p}$ as a quotient of $\Div_\Lambda(X_\infty)$ it does not play a role when we take protective limits as the following corollary points out.
{In view of the notation introduced previously,
\[
\Pic_\Lambda(X_\infty) = \Pic(X_\infty) \otimes_{\Z_p[\mathcal{G}]} {\Lambda}.
\]}

\begin{corollary}
\label{cor:ideals}
With notation as above,
\[
\Pic_\Lambda(X_\infty)\cong \Div_\Lambda(X_\infty)/{\Pr}^{\unr}_\Lambda(X_\infty).
\]
\end{corollary}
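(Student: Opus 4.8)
The plan is to pass to the projective limit in Theorem~\ref{thm:projection} and show that the ``ramified'' contributions disappear in the limit. Recall that by Theorem~\ref{thm:projection}, for $n\gg 0$ we have
\[
\Pic(X_n)\otimes\Z_p \cong \Div_\Lambda(X_\infty)/N_n,\qquad N_n = {\Pr}^{\unr}_\Lambda(X_\infty) + \omega_n \Div_\Lambda(X_\infty) + {\Pr}^{\ram}_n(X_\infty),
\]
where I abbreviate $\omega_n\Div_\Lambda(X_\infty) = (\omega_n(T_1),\dots,\omega_n(T_d))\Div_\Lambda(X_\infty)$. Taking inverse limits over $n$ and using that $\Pic_\Lambda(X_\infty) = \varprojlim_n \bigl(\Pic(X_n)\otimes\Z_p\bigr)$ (this is the definition, mirroring the construction of $\Jac_\Lambda(X_\infty)$ in Section~\ref{sec: IT of pro-p tower}), the task reduces to identifying $\varprojlim_n \Div_\Lambda(X_\infty)/N_n$ with $\Div_\Lambda(X_\infty)/{\Pr}^{\unr}_\Lambda(X_\infty)$.

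First I would argue that $\bigcap_n N_n = {\Pr}^{\unr}_\Lambda(X_\infty)$. The inclusion $\supseteq$ is clear since ${\Pr}^{\unr}_\Lambda(X_\infty)\subseteq N_n$ for every $n$. For the reverse inclusion, the point is that the extra generators of $N_n$ beyond ${\Pr}^{\unr}_\Lambda(X_\infty)$ all lie in deep ideals. The generators $\omega_n(T_i)\Div_\Lambda(X_\infty)$ lie in $(T_1,\dots,T_d)^n\Div_\Lambda(X_\infty)$ (since $\omega_n(T_i)\in (T_i)^{p^n}\subseteq\mathfrak{m}^n$ where $\mathfrak m$ is the maximal ideal), and similarly $\bomega_{v_j,n}P_{v_j}$: by Lemma~\ref{groups}, $s_n(\sigma)\to\infty$, so $\bomega_{v_j,n}$ is a sum of $\prod_t s_n(\sigma_{v_j,t})$ group elements and hence, writing $\sigma_{v_j,t}=1+(\sigma_{v_j,t}-1)$, one checks $\bomega_{v_j,n}\in\mathfrak m^{c_n}$ with $c_n\to\infty$ — essentially because in each variable a norm element $1+\sigma+\dots+\sigma^{s-1}$ with $\sigma-1\in\mathfrak m$ sits in $\mathfrak m^{?}$; more robustly, $\bomega_{v_j,n}$ maps to $0$ in $\Lambda/\omega_m$ for all $m\le n$ hard to control directly, so instead I would use: $\bomega_{v_j,n}P_{v_j}\in\omega_{n'}\Div_\Lambda(X_\infty)+{\Pr}^{\unr}_\Lambda(X_\infty)$ is not literally true, so the cleanest route is the $\mathfrak m$-adic one, noting $\Div_\Lambda(X_\infty)$ is a finitely generated $\Lambda$-module and $\bigcap_n \mathfrak m^n\Div_\Lambda(X_\infty)=0$ by Krull's intersection theorem. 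Combining, $\bigcap_n N_n \subseteq {\Pr}^{\unr}_\Lambda(X_\infty) + \bigcap_n \mathfrak m^{c_n}\Div_\Lambda(X_\infty)$; I would need to be slightly careful and instead show directly that any element of $\bigcap_n N_n$ can be written, modulo ${\Pr}^{\unr}_\Lambda(X_\infty)$, as an element of $\mathfrak m^{c_n}\Div_\Lambda(X_\infty)$ for every $n$, forcing it into ${\Pr}^{\unr}_\Lambda(X_\infty)$.

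Next, having $\bigcap_n N_n = {\Pr}^{\unr}_\Lambda(X_\infty)$, I would promote this to an isomorphism of inverse limits. Since the $N_n$ form a decreasing (for $n\gg 0$, cofinally) chain of submodules with $\omega_n\Div_\Lambda(X_\infty)\subseteq N_n$ and $\bigcap_n\omega_n\Div_\Lambda(X_\infty)=0$, the quotients $\Div_\Lambda(X_\infty)/N_n$ form a surjective inverse system whose transition maps are the obvious projections (these exist precisely because $N_{n}\supseteq N_{n+1}$ cofinally — this monotonicity needs a line of justification, using $\omega_{n}\mid\omega_{n+1}$ and $\bomega_{v_j,n}\mid \bomega_{v_j,n+1}$ which follows from Lemma~\ref{groups} giving $s_{n+1}=p\, s_n$, so $\bomega_{v_j,n+1}=\bomega_{v_j,n}\cdot(\text{norm element})$). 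Then $\varprojlim_n \Div_\Lambda(X_\infty)/N_n = \Div_\Lambda(X_\infty)/\bigcap_n N_n = \Div_\Lambda(X_\infty)/{\Pr}^{\unr}_\Lambda(X_\infty)$, where the first equality holds because $\Div_\Lambda(X_\infty)$ is $\mathfrak m$-adically complete (finitely generated over the complete local ring $\Lambda$) and the $N_n$ are cofinal with a system of neighborhoods of $0$... actually more carefully: $\varprojlim \Div_\Lambda/N_n \cong \Div_\Lambda / \bigcap N_n$ holds when the system is Mittag-Leffler, which it is here since all transition maps are surjective. This gives the claimed isomorphism, naturally in a way compatible with the $\Lambda$-module structure, completing the proof.

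The main obstacle I anticipate is the bookkeeping in the second paragraph: showing cleanly that the ramified generators $\bomega_{v_j,n}P_{v_j}$ become negligible, i.e.\ that they lie in arbitrarily deep $\mathfrak m$-adic (or $\omega$-adic) neighborhoods as $n\to\infty$, and simultaneously that the transition maps of the inverse system are well-defined (the monotonicity $N_{n+1}\subseteq N_n$, up to shrinking to a cofinal subsequence). Both ultimately rest on Lemma~\ref{groups} (each $s_n(\sigma_{v_j,t})$ grows by a factor of $p$) together with Krull's intersection theorem for the finitely generated $\Lambda$-module $\Div_\Lambda(X_\infty)$; once these are in hand, the identification of the inverse limit is formal.
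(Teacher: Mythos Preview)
Your overall strategy matches the paper's (which simply cites \cite[Corollary~5.5]{GV24} together with Theorem~\ref{thm:projection}): pass to the inverse limit over $n$ and show the ramified contributions vanish. The monotonicity $N_{n+1}\subseteq N_n$ via Lemma~\ref{groups} and the $\mathfrak m$-adic decay of $\omega_n(T_i)$ and $\bomega_{v_j,n}$ are handled correctly.

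However, your final step contains a genuine error. The assertion ``$\varprojlim \Div_\Lambda/N_n \cong \Div_\Lambda/\bigcap N_n$ holds when the system is Mittag--Leffler'' is false: Mittag--Leffler only kills $\varprojlim^1$, it does \emph{not} force the canonical map $M/\bigcap N_n \to \varprojlim M/N_n$ to be surjective (take $M=\Z$, $N_n=p^n\Z$: transition maps are surjective, $\bigcap N_n=0$, yet the limit is $\Z_p\neq\Z$). Your earlier instinct (``cofinal with a system of neighbourhoods of $0$'') is closer but also fails as stated, since $N_n$ never contains $\mathfrak m^k\Div_\Lambda(X_\infty)$ for any $k$: the quotient $\Lambda/(\omega_n(T_1),\dots,\omega_n(T_d))=\Z_p[\mathcal G_n]$ is not Artinian.

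The correct argument works in $M':=\Div_\Lambda(X_\infty)/\Pr^{\unr}_\Lambda(X_\infty)$ and uses only the \emph{one-sided} containment $N_n'\subseteq\mathfrak m^{c_n}M'$ (which your estimates do establish). Since $M'$ is finitely generated over the complete Noetherian local ring $\Lambda$, it is $\mathfrak m$-adically complete and every submodule is closed. Given a compatible family $(x_n+N_n')$, the lifts $x_n$ are $\mathfrak m$-Cauchy, hence converge to some $x\in M'$; for $m>n$ one has $x_m-x_n\in N_n'$, and closedness of $N_n'$ gives $x-x_n\in N_n'$. This yields surjectivity of $M'\to\varprojlim M'/N_n'$, while injectivity is Krull's intersection theorem as you note. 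With this repair your argument goes through.
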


\begin{proof}
This is analogous to \cite[Corollary 5.5]{GV24} using Theorem~\ref{thm:projection} instead of \cite[Theorem 5.4]{GV24}.
\end{proof}

{Fix $v\in V(X)$.}
Let $w_{v,\infty}^{\unr}$ denote the vertex $(v, \mathbf{1}_{\cG})$ in $X_\infty^{\unr} = X(\cG,\alpha)$.
Let $\Delta$ be the operator on $\Div_\Lambda(X_\infty^{\unr})$ given by \[
\Delta w^{\unr}_{v,\infty}=
\begin{cases}
P_v^{\unr} &\textrm{ if } v \textrm{ is unramified in } X_\infty/X\\
(\sigma_{v,1}-1)w_{v,\infty}^{\unr} &\textrm{ if }
v \textrm{ is ramified in } X_\infty/X \textrm{ and } \rk(I_{v})=1\\
w_{v,\infty}^{\unr} &\textrm{ if }
v \textrm{ is ramified in } X_\infty/X \textrm{ and } \rk(I_{v})>1,
\end{cases}
\]
where 
\[
P^{\unr}_v =\deg(v)(v,1)-\sum_{e\in \E^o_{v}(X)}(t(e),\alpha(e))\in \Div(X^{\unr}_\infty).
\]
In previous references, the operator $P_v^{\unr}$ is often written as $\mathcal{L}_{\Lambda}^{\unr}(-)$ and is called the Laplacian operator.
This notation is consistent with how $P_v$ was expressed in Definition~\ref{Pv}; only now $I_{t(e)}$ is trivial.
{Note that this divisor was denoted by $P_v^{\infty}$ before.}

\begin{remark}
If $X$ is a finite connected graph and $X_\infty =X_\infty^{\unr}$ is an unramified $\Z_p^d$-tower of $X$, this operator $\Delta$ was already studied in \cite{Kleine-Mueller4} and was denoted by $\Delta_\infty$.
\end{remark}
 
\begin{theorem}
\label{thM.char-ideals}
Let $X_\infty = X(\Z_p^d, \mathcal{I}, \alpha)$ denote the (ramified) derived graph of $X$.
With this notation
\[
\Char_\Lambda(\Pic_\Lambda(X_\infty))=(\det(\Delta)).
\]
\end{theorem}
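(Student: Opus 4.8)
The plan is to reduce the statement to the corresponding fact in the unramified setting, namely to the identity $\Char_\Lambda(\Pic_\Lambda(X_\infty^{\unr})) = (\det \Delta_\infty)$ proved in \cite{Kleine-Mueller4}, by exhibiting a presentation of $\Pic_\Lambda(X_\infty)$ whose presentation matrix is exactly the matrix of $\Delta$. First I would invoke Corollary~\ref{cor:ideals} to write
\[
\Pic_\Lambda(X_\infty) \cong \Div_\Lambda(X_\infty)/\Pr_\Lambda^{\unr}(X_\infty),
\]
so the right-hand side is the cokernel of the $\Lambda$-linear map $\Div_\Lambda(X_\infty^{\unr}) \to \Div_\Lambda(X_\infty)$ sending $w_{v,\infty}^{\unr} \mapsto P_v$ for unramified $v$; but since $\Div_\Lambda(X_\infty)$ is not free (it equals $\Lambda^r \oplus \bigoplus_{l>r} \Lambda/\operatorname{Aug}(I_{v_l})$ by the structure result quoted in the proof of Theorem~\ref{thm:projection}), I instead work entirely on $\Div_\Lambda(X_\infty^{\unr}) = \Lambda^s$ and show $\Pic_\Lambda(X_\infty) \cong \coker(\Delta)$ as $\Lambda$-modules. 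The point is that $\iota_* \colon \Div_\Lambda(X_\infty^{\unr}) \to \Div_\Lambda(X_\infty)$ is surjective with kernel generated, on the ramified coordinate $v_l$, by the augmentation ideal $\operatorname{Aug}(I_{v_l})$ times $w_{v_l,\infty}^{\unr}$; so $\Delta$ is built precisely to fold that kernel into the map: for $\rk(I_v)=1$ one records the single relation $(\sigma_{v,1}-1)w_{v,\infty}^{\unr}$, and for $\rk(I_v)>1$ one records $w_{v,\infty}^{\unr}$ itself (killing that coordinate, which is consistent with $\widetilde T_v = 1$ in Definition~\ref{matrix defn} and with $\Div_\Lambda(X_\infty)$ having that coordinate annihilated by a height-$\ge 2$ ideal, hence pseudo-null and invisible to the characteristic ideal).

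Concretely, the key steps in order are: (1) Using Corollary~\ref{cor:ideals} and the decomposition $\Div_\Lambda(X_\infty) = \Lambda^r \oplus \bigoplus_{l=r+1}^s \Lambda/\operatorname{Aug}(I_{v_l})$, identify $\Pic_\Lambda(X_\infty)$ with $\coker(\Delta)$, where $\Delta$ acts on $\Div_\Lambda(X_\infty^{\unr}) = \Lambda^s$ by the displayed piecewise rule; this requires checking that the relations $P_v$ (for unramified $v$) together with the relations $(\sigma_{v,1}-1)w_{v,\infty}^{\unr}$ resp.\ $w_{v,\infty}^{\unr}$ (for ramified $v$) generate the same $\Lambda$-submodule of $\Lambda^s$ that pulls back $\Pr_\Lambda^{\unr}(X_\infty)$ along $\iota_*$. (2) Apply the algebraic fact — standard in this literature, and the content of the unramified Main Conjecture in \cite{Kleine-Mueller4} — that for a square matrix $M$ over $\Lambda$ with $\det M \neq 0$, one has $\Char_\Lambda(\coker M) = (\det M)$; this needs the hypothesis that $\coker(\Delta)$ is $\Lambda$-torsion, i.e.\ $\det(\Delta) \neq 0$, which follows because the graphs $X_n$ are connected and finite so each $\Pic(X_n)\otimes\Z_p$ has the expected rank, forcing $\Jac_\Lambda(X_\infty)$ (and hence $\Pic_\Lambda(X_\infty)$, which differs from it by $\Lambda$ via the degree sequence) to be torsion. (3) Combine: $\Char_\Lambda(\Pic_\Lambda(X_\infty)) = \Char_\Lambda(\coker \Delta) = (\det \Delta)$.

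There is a subtlety I would address carefully in step (1): strictly speaking $\coker(\Delta)$ as I have defined it on $\Lambda^s$ may differ from $\Pic_\Lambda(X_\infty)$ by a pseudo-null module supported at the ramified coordinates of rank $>1$ (precisely the coordinates where $\Delta$ acts by the identity but $\Div_\Lambda(X_\infty)$ has already quotiented by $\operatorname{Aug}(I_{v_l})$, whose height is $\rk(I_{v_l}) \ge 2$). Since the characteristic ideal is insensitive to pseudo-null modules, this does not affect the conclusion, but one should state it explicitly — either by showing the two modules are genuinely isomorphic or by noting the pseudo-isomorphism suffices.

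\textbf{Main obstacle.} The hard part will be step (1): verifying that the piecewise-defined $\Delta$ really has cokernel (pseudo-)isomorphic to $\Pic_\Lambda(X_\infty)$, i.e.\ that passing from $\Div_\Lambda(X_\infty^{\unr})$ down through $\iota_*$ and then modding out by $\Pr_\Lambda^{\unr}(X_\infty)$ is the same as taking $\coker(\Delta)$ on $\Lambda^s$. This is a bookkeeping argument about augmentation ideals of the inertia subgroups $I_{v_l}$ and about how the immersion $\iota$ collapses the $|I_{v_l}|$-fold fibers, and it is exactly where the choice of the three cases in the definition of $\Delta$ — with the $(\sigma_{v,1}-1)$ factor in the rank-one case and the bare identity in the higher-rank case — has to be matched against the structure of $\ker(\iota_*)$; getting the higher-rank case to land on the nose (rather than merely up to pseudo-isomorphism) is the delicate point. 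Once that presentation is in hand, the passage to characteristic ideals is formal, using the determinant-of-presentation-matrix principle together with the torsionness established as in \cite{Kleine-Mueller4}.
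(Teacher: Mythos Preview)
Your proposal is correct and follows essentially the same route as the paper: pull $\Pic_\Lambda(X_\infty)$ back to $\Div_\Lambda(X_\infty^{\unr})=\Lambda^s$ via $\iota_*$ using Corollary~\ref{cor:ideals}, observe that $\ker(\iota_*)+\Pr'_\Lambda(X_\infty^{\unr})\subseteq\Image(\Delta)$ with pseudo-null quotient (coming exactly from the $w_{v,\infty}^{\unr}$ with $\rk(I_v)\ge 2$, annihilated by the height-$\ge 2$ ideal $\operatorname{Aug}(I_v)$), and conclude via $\Char_\Lambda(\coker\Delta)=(\det\Delta)$. One small correction: your parenthetical that $\Pic_\Lambda$ ``differs from $\Jac_\Lambda$ by $\Lambda$'' is not quite right (it would then fail to be torsion); the extra piece is $\Lambda$-torsion, as the Remark following the theorem indicates, but this does not affect your argument.
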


\begin{proof}
Write {as before} $X_\infty^{\unr} = X(\Z_p^d, \alpha)$ to denote the unramified derived graph.
Define\footnote{We want to emphasize that $\Pr^{\unr}_\Lambda(X_\infty)$ was defined as a $\Lambda$-submodule of $\Div_\Lambda(X_\infty)$ generated by $\{P_{v_l} \mid 1 \le l \le r\}$ in Definition~\ref{Pv} which is different from $\Pr'_\Lambda(X_\infty^{{\unr}})$.
The two definitions coincide if $X_\infty = X_\infty^{\unr}$.} 
\[
{\Pr}'_\Lambda(X_\infty^{{\unr}}) = \Lambda\textrm{-submodule of } \Div_{\Lambda}(X_\infty^{\unr}) \textrm{ generated by } \{P_{v_l}^{\unr}\mid 1\le l\le r\}.
\]
Note that the quotient $\Div_\Lambda(X_\infty^{\unr})/\Pr'_\Lambda(X_\infty^{\unr})$ has $\Lambda$-rank $s-r$.
Also, recall the surjection
\[
\iota_* : \Div_{\Lambda}(X_\infty^{\unr}) \longrightarrow \Div_{\Lambda}(X_\infty).
\]
Under this map, the pre-image of $\Pr_{\Lambda}^{\unr}(X_\infty)$ is precisely $\ker(\iota_*)+\textup{Pr}'_\Lambda(X_\infty^{\unr})$.
Now, it follows immediately from Corollary~\ref{cor:ideals} that
\[
\Pic_\Lambda(X_\infty) \cong \Div_\Lambda(X_\infty)/\textup{Pr}^{\unr}_\Lambda(X_\infty) =\Div_\Lambda(X_\infty^{\unr})/\ker(\iota_*)+\textup{Pr}'_\Lambda(X_\infty^{\unr}).
\]
Thus it suffices to compute the characteristic ideal of $\Div_\Lambda(X_\infty^{\unr})/\ker(\iota_*)+\textup{Pr}'_\Lambda(X_\infty^{\unr})$.
Consider the exact sequence
\[
0\to \Image(\Delta)/\ker(\iota_*)+\textup{Pr}'_\Lambda(X_\infty^{\unr})\to \Div_\Lambda(X_\infty^{\unr})/\ker(\iota_*)+\textup{Pr}'_\Lambda(X_\infty^{\unr})\to \Div_\Lambda(X^{\unr}_{\infty})/\Image(\Delta)\to 0.
\]
To prove the theorem we show that the first term is pseudo-null.
{By definition, $\Image(\Delta)$ is generated by 
\begin{align*}
&\{w^\textup{unr}_{v,\infty}\mid v\in \{v_{r+1},\dots,v_s\}, \  \rk(I_v)\ge 2\}\\
&\cup \{(\sigma_{v,1}-1)w_{v,\infty}^{\unr}\mid v\in \{v_{r+1},\dots,v_s\}, \  \rk(I_v)=1\}\cup \{P_v^{\unr}\mid\textrm{ if } v \textrm{ is an unramified vertex in } X_\infty/X\}.
\end{align*}
The last two sets are contained in $\ker(\iota_*)+\Pr'_\Lambda(X_\infty^{\unr})$. Thus the quotient $\Image(\Delta)/\ker(\iota_*)+\textup{Pr}'_\Lambda(X_\infty^{\unr})$}
 is generated by 
\[
\{w^\textup{unr}_{v,\infty}\mid v\in \{v_{r+1},\dots,v_s\}, \  \rk(I_v)\ge 2\}
\]
Each $w^\textup{unr}_{v,\infty}$ is annihilated by $\textup{Aug}(I_v)$.
As $\rk(I_v)\ge 2$, the {ideal $\textup{Aug}(I_v)$ has height at least $2$.
In particular,} the module $\Lambda/\textup{Aug}(I_v)$ is pseudo-null and the result follows; {see p.~\pageref{pseudooo} for the definition of pseudo-null modules}.
\end{proof}

\begin{remark}
If $X_\infty = X_\infty^{\unr}$, the above theorem asserts that
\[
\Char_{\Lambda}(\Pic_{\Lambda}(X_\infty^{\unr})) = (\det(\Delta)).
\]
On the other hand, it was proven in \cite[Theorem~5.2]{Kleine-Mueller4} that if $X_\infty/X$ is an unramified $\Z_p^d$-extension of $X$ (obtained as a derived graph of $X$) then
\[
(\det(\Delta)) = \begin{cases}
    \Char_{\Lambda}(\Jac_{\Lambda}(X_\infty)) & \textrm{ if } d\ge 2\\
    (T)\Char_{\Lambda}(\Jac_{\Lambda}(X_\infty)) & \textrm{ if } d=1.
\end{cases}
\]
Therefore, we conclude
\[
\Char_{\Lambda}(\Pic_{\Lambda}(X_\infty)) = \begin{cases}
    \Char_{\Lambda}(\Jac_{\Lambda}(X_\infty)) & \textrm{ if } d\ge 2\\
    (T)\Char_{\Lambda}(\Jac_{\Lambda}(X_\infty)) & \textrm{ if } d=1.
\end{cases}
\]
Compare also with \cite[Remark~5.4]{Kleine-Mueller4}.
\end{remark}

\begin{example}
We give an example of an unramified cover of $X$ that can also be found in \cite{Kleine-Mueller4}.
Here the dashed line between $A$ and $B$ in the graph at the bottom layer has non-trivial voltage assignment, say $\tau$.

\begin{center}
\begin{tikzpicture}[scale=0.75]
\node[inner sep=0pt, label = left:{\tiny{$A$}}] (A) at (-1.5,0) {};
\node[inner sep=0pt, label = right:{\tiny{$B$}}] (B) at (1.5,0) {}; 
\node[inner sep=0pt, label = below:{\tiny{$\tau$}}] (T) at (0,1) {}; 

\fill (1.5, 0) circle (1.5pt);
\fill (-1.5, 0) circle (1.5pt);

\draw[thick, mid arrow, dashed] (A) to[bend left] (B);
\draw[thick, mid arrow] (A) to[bend right] (B);
\end{tikzpicture}
\hspace{0.5cm}
\begin{tikzpicture}[scale=0.75]
\node[inner sep=0pt, label = left:{\tiny{$A_1$}}] (A1) at (-2,0) {};
\node[inner sep=0pt, label = right:{\tiny{$B_3$}}] (B3) at (2,0) {}; 
\node[inner sep=0pt, label = above:{\tiny{$B_2$}}] (B2) at (2*-0.5,2*0.866) {};
\node[inner sep=0pt, label = right:{\tiny{$A_2$}}] (A2) at (2*0.5,2*0.866) {}; 
\node[inner sep=0pt, label = below:{\tiny{$B_1$}}] (B1) at (2*-0.5,2*-0.866) {};
\node[inner sep=0pt, label = below:{\tiny{$A_3$}}] (A3) at (2*0.5,2*-0.866) {}; 

\fill (-2,0) circle (1.5pt);
\fill (2,0) circle (1.5pt);
\fill (2*-0.5,2*0.866) circle (1.5pt);
\fill (2*-0.5,-2*0.866) circle (1.5pt);
\fill (2*0.5,2*0.866) circle (1.5pt);
\fill (2*0.5,-2*0.866) circle (1.5pt);

\draw[thick] (A1) to (B1);
\draw[thick, dashed] (A1) to (B2);
\draw[thick] (B2) to (A2);
\draw[thick, dashed] (A2) to (B3);
\draw[thick, dashed] (A3) to (B1);
\draw[thick] (A3) to (B3);
\end{tikzpicture}
\hspace{0.5cm}
\begin{tikzpicture}[scale=0.7]
\node[inner sep=0pt, label = left:{\tiny{$B_6$}}] (B6) at (-2.3*1,0){};
\node[inner sep=0pt, label = left:{\tiny{$A_5$}}] (A5) at (2.3*-0.93969,2.3*0.34202){};
\node[inner sep=0pt, label = left:{\tiny{$A_4$}}] (A4) at (2.3*-0.5,2.3*0.866){};
\node[inner sep=0pt, label = above:{\tiny{$A_3$}}] (A3) at (2.3*0.173648,2.3*0.9848){};
\node[inner sep=0pt, label = right:{\tiny{$A_2$}}] (A2) at (2.3*0.766,2.3*0.64278){};
\node[inner sep=0pt, label = right:{\tiny{$A_1$}}] (A1) at (2.3*1,0){};
\node[inner sep=0pt, label = right:{\tiny{$A_9$}}] (A9) at (2.3*0.766,2.3*-0.64278){};
\node[inner sep=0pt, label = below:{\tiny{$A_8$}}] (A8) at (2.3*0.173648,2.3*-0.9848){};
\node[inner sep=0pt, label = left:{\tiny{$A_7$}}] (A7) at (2.3*-0.5,2.3*-0.866){};
\node[inner sep=0pt, label = left:{\tiny{$A_6$}}] (A6) at (2.3*-0.93969,2.3*-0.34202){};
\node[inner sep=0pt, label = left:{\tiny{$B_5$}}] (B5) at (2.3*-0.766,2.3*0.64278){};
\node[inner sep=0pt, label = left:{\tiny{$B_7$}}] (B7) at (2.3*-0.766,2.3*-0.64278){};
\node[inner sep=0pt, label = above:{\tiny{$B_4$}}] (B4) at (2.3*-0.173648,2.3*0.9848){};
\node[inner sep=0pt, label = below:{\tiny{$B_8$}}] (B8) at (2.3*-0.173648,2.3*-0.9848){};
\node[inner sep=0pt, label = right:{\tiny{$B_3$}}] (B3) at (2.3*0.5,2.3*0.866){};
\node[inner sep=0pt, label = right:{\tiny{$B_2$}}] (B2) at (2.3*0.93969,2.3*0.34202){};
\node[inner sep=0pt, label = right:{\tiny{$B_1$}}] (B1) at (2.3*0.93969,2.3*-0.34202){};
\node[inner sep=0pt, label = right:{\tiny{$B_9$}}] (B9) at (2.3*0.5,2.3*-0.866){};

\fill (-2.3*1,0) circle (1.5pt);
\fill (2.3*-0.93969,2.3*0.34202) circle (1.5pt);
\fill (2.3*-0.5,2.3*0.866) circle (1.5pt);
\fill (2.3*0.173648,2.3*0.9848) circle (1.5pt);
\fill (2.3*0.766,2.3*0.64278) circle (1.5pt);
\fill (2.3*1,0) circle (1.5pt);
\fill (2.3*0.766,2.3*-0.64278) circle (1.5pt);
\fill (2.3*0.173648,2.3*-0.9848) circle (1.5pt);
\fill (2.3*-0.5,2.3*-0.866) circle (1.5pt);
\fill (2.3*-0.93969,2.3*-0.34202) circle (1.5pt);

\fill (2.3*-0.766,2.3*0.64278) circle (1.5pt);
\fill (2.3*-0.766,2.3*-0.64278) circle (1.5pt);
\fill (2.3*-0.173648,2.3*0.9848) circle (1.5pt);
\fill (2.3*-0.173648,2.3*-0.9848) circle (1.5pt);

\fill (2.3*0.5,2.3*0.866) circle (1.5pt);
\fill (2.3*0.5,2.3*-0.866) circle (1.5pt);
\fill (2.3*0.93969,2.3*0.34202) circle (1.5pt);
\fill (2.3*0.93969,2.3*-0.34202) circle (1.5pt);

\draw[thick] (A7) to (B7);
\draw[thick] (A5) to (B5);
\draw[thick] (A6) to (B6);
\draw[thick, dashed] (A6) to (B7);
\draw[thick, dashed] (A5) to (B6);
\draw[thick, dashed] (A3) to (B4);
\draw[thick] (A8) to (B8);
\draw[thick, dashed] (A4) to (B5);
\draw[thick, dashed] (A7) to (B8);
\draw[thick] (A4) to (B4);
\draw[thick, dashed] (A2) to (B3);
\draw[thick] (A3) to (B3);
\draw[thick] (A9) to (B9);
\draw[thick, dashed] (A9) to (B1);
\draw[thick] (A2) to (B2);
\draw[thick, dashed] (A8) to (B9);
\draw[thick] (A1) to (B1);
\draw[thick, dashed] (A1) to (B2);
\end{tikzpicture} 
\end{center}

For convenience of the reader, we draw another way to interpret the 18-gon in the third diagram above:

\begin{center}
\begin{tikzpicture}[scale=0.75]
\node[inner sep=0pt, label = right:\tiny{$A_1$}] (A1) at (-1.8,0) {};
\node[inner sep=0pt, label = left:\tiny{$B_3$}] (B3) at (1.8,0) {}; 
\node[inner sep=0pt, label = below:\tiny{$B_2$}] (B2) at (1.8*-0.5,1.8*0.866) {};
\node[inner sep=0pt, label = below:\tiny{$A_2$}] (A2) at (1.8*0.5,1.8*0.866) {}; 
\node[inner sep=0pt, label = above:\tiny{$B_1$}] (B1) at (1.8*-0.5,1.8*-0.866) {};
\node[inner sep=0pt, label = above:\tiny{$A_3$}] (A3) at (1.8*0.5,1.8*-0.866) {}; 
\node[inner sep=0pt, label = left:\tiny{$A_7$}] (A7) at (-3,0) {};
\node[inner sep=0pt, label = right:\tiny{$B_9$}] (B9) at (3,0) {}; 
\node[inner sep=0pt, label = above:\tiny{$B_8$}] (B8) at (3*-0.5,3*0.866) {};
\node[inner sep=0pt, label = above:\tiny{$A_8$}] (A8) at (3*0.5,3*0.866) {}; 
\node[inner sep=0pt, label = below:\tiny{$B_7$}] (B7) at (3*-0.5,3*-0.866) {};
\node[inner sep=0pt, label = below:\tiny{$A_9$}] (A9) at (3*0.5,3*-0.866) {}; 
\node[inner sep=0pt, label = right:\tiny{$A_4$}] (A4) at (-2.5,0) {};
\node[inner sep=0pt, label = left:\tiny{$B_6$}] (B6) at (2.5,0) {}; 
\node[inner sep=0pt, label = below:\tiny{$B_5$}] (B5) at (2.5*-0.5,2.5*0.866) {};
\node[inner sep=0pt, label = below:\tiny{$A_5$}] (A5) at (2.5*0.5,2.5*0.866) {}; 
\node[inner sep=0pt, label = above:\tiny{$B_4$}] (B4) at (2.5*-0.5,2.5*-0.866) {};
\node[inner sep=0pt, label = above:\tiny{$A_6$}] (A6) at (2.5*0.5,2.5*-0.866) {}; 

\fill (-1.8,0) circle (1.5pt);
\fill (1.8,0) circle (1.5pt);
\fill (1.8*-0.5,1.8*0.866) circle (1.5pt);
\fill (1.8*0.5,1.8*0.866) circle (1.5pt);
\fill (1.8*-0.5,1.8*-0.866) circle (1.5pt);
\fill (-1.8*-0.5,1.8*-0.866) circle (1.5pt);

\fill (3,0) circle (1.5pt);
\fill (-3,0) circle (1.5pt);
\fill (3*-0.5,3*0.866) circle (1.5pt);
\fill (3*-0.5,-3*0.866) circle (1.5pt);
\fill (3*0.5,3*0.866) circle (1.5pt);
\fill (3*0.5,-3*0.866) circle (1.5pt);

\fill (2.5,0) circle (1.5pt);
\fill (-2.5,0) circle (1.5pt);
\fill (2.5*-0.5,2.5*0.866) circle (1.5pt);
\fill (2.5*-0.5,-2.5*0.866) circle (1.5pt);
\fill (2.5*0.5,2.5*0.866) circle (1.5pt);
\fill (2.5*0.5,-2.5*0.866) circle (1.5pt);

\draw[thick] (A1) to (B1);
\draw[thick, dashed] (A1) to (B2);
\draw[thick] (B2) to (A2);
\draw[thick, dashed] (A2) to (B3);
\draw[thick, dashed] (A9) to (B1);
\draw[thick] (A3) to (B3);

\draw[thick] (A4) to (B4);
\draw[thick, dashed] (A4) to (B5);
\draw[thick] (B5) to (A5);
\draw[thick, dashed] (A5) to (B6);
\draw[thick, dashed] (A3) to (B4);
\draw[thick] (A6) to (B6);

\draw[thick] (A7) to (B7);
\draw[thick, dashed] (A7) to (B8);
\draw[thick] (B8) to (A8);
\draw[thick, dashed] (A8) to (B9);
\draw[thick, dashed] (A6) to (B7);
\draw[thick] (A9) to (B9);
\end{tikzpicture}
\end{center}
The characteristic polynomial of $\Pic_{\Lambda}(X_\infty)$ is $T^2$ for this unramified tower.
\end{example}

For ramified branched covers $X_\infty$, it has been pointed out \cite[Section~6]{GV24} that $\det(\Delta) = \det(D-B)$ where $D, B$ were matrices introduced in Definition~\ref{matrix defn}.
We now provide examples of the same.

\begin{example}
Reconsider the branched covering from Example~\ref{ramified version of 3.4}.
We calculate the matrices
\[
D = \begin{bmatrix}
    2 & 0 & 0\\
    0 & 0 & 0\\
    0 & 0 & 0
\end{bmatrix} \textrm{ and } B = \begin{bmatrix}
    0 & 0 & 0 \\
    T+1 & -T & 0 \\
    1 & 0 & -T
\end{bmatrix}.
\]
It follows that
\[
\Char_\Lambda(\Pic_\Lambda(X_\infty)) = \det(D-B) = \det\begin{bmatrix}
    2 & 0 & 0 \\
    -(1+T) & T & 0 \\
    -1& 0 & T
\end{bmatrix} = 2T^2.
\]
\end{example}

\begin{example}
\label{ex 4.9}
In this example $G=(\Z/3\Z, +)$ for each subsequent layer and the vertex $R\in V(X)$ is totally ramified whereas the vertex $U\in V(X)$ is unramified.
The dashed edge has non-trivial voltage assignment.

\begin{center}
\begin{tikzpicture}[scale=0.75]
\node[inner sep=0pt, label = left:{\tiny{$R$}}] (A) at (-1.5,0) {};
\node[inner sep=0pt, label = right:{\tiny{$U$}}] (B) at (1.5,0) {}; 
\node[inner sep=0pt, label = below:{\tiny{$\tau$}}] (T) at (0,1) {}; 

\fill (1.5, 0) circle (1.5pt);
\fill (-1.5, 0) circle (1.5pt);

\draw[thick, mid arrow, dashed] (A) to[bend left] (B);
\draw[thick, mid arrow] (A) to[bend right] (B);
\end{tikzpicture}
\hspace{1cm}
\begin{tikzpicture}[scale=0.75]
\node[inner sep=0pt, label = below:{\tiny{$R$}}] (A) at (0,0) {};
\node[inner sep=0pt, label = left:{\tiny{$U_1$}}] (B) at (-1.5,1) {}; 
\node[inner sep=0pt, label = right:{\tiny{$U_2$}}] (C) at (1.5,1) {}; 
\node[inner sep=0pt, label = above:{\tiny{$U_3$}}] (D) at (0,2.5) {}; 

\fill (0, 0) circle (1.5pt);
\fill (0, 2.5) circle (1.5pt);
\fill (-1.5, 1) circle (1.5pt);
\fill (1.5, 1) circle (1.5pt);

\draw[thick,dashed] (A) to[bend left] (B);
\draw[thick] (A) to[bend right] (B);
\draw[thick, dashed] (A) to[bend left] (D);
\draw[thick] (A) to[bend right] (D);
\draw[thick, dashed] (A) to[bend left] (C);
\draw[thick] (A) to[bend right] (C);
\end{tikzpicture}
\hspace{1cm}
\begin{tikzpicture}[scale=1]
\node[inner sep=0pt, label = above:{\tiny{$R$}}] (A) at (0,0) {};
\node[inner sep=0pt, label = right:{\tiny{$U_2$}}] (C) at (2,0) {}; 
\node[inner sep=0pt, label = right:{\tiny{$U_8$}}] (C') at (2*0.766,2*0.64278) {}; 
\node[inner sep=0pt, label = right:{\tiny{$U_5$}}] (C'') at (2*0.766,2*-0.64278) {}; 
\node[inner sep=0pt, label = above:{\tiny{$U_6$}}] (B) at (2*0.173648,2*0.9848) {}; 
\node[inner sep=0pt, label = left:{\tiny{$U_3$}}] (B') at (2*-0.5,2*0.866) {}; 
\node[inner sep=0pt, label = left:{\tiny{$U_9$}}] (B'') at (2*-0.93969,2*0.34202) {}; 
\node[inner sep=0pt, label = left:{\tiny{$U_4$}}] (D) at (2*-0.93969,2*-0.34202) {}; 
\node[inner sep=0pt, label = left:{\tiny{$U_1$}}] (D') at (2*-0.5,2*-0.866) {}; 
\node[inner sep=0pt, label = below:{\tiny{$U_7$}}] (D'') at (2*0.173648,2*-0.9848) {}; 

\fill (2,0) circle (1.5pt);
\fill (2*0.766,2*0.64278) circle (1.5pt);
\fill (2*0.766,2*-0.64278) circle (1.5pt);
\fill (0,0) circle (1.5pt);
\fill (2*0.173648,2*0.9848) circle (1.5pt);
\fill (2*-0.5,2*0.866) circle (1.5pt);
\fill (2*-0.93969,2*0.34202) circle (1.5pt);
\fill (2*-0.93969,2*-0.34202) circle (1.5pt);
\fill (2*-0.5,2*-0.866) circle (1.5pt);
\fill (2*0.173648,2*-0.9848) circle (1.5pt);

\draw[thick,dashed] (A) to[bend left=20] (C');
\draw[thick] (A) to[bend right=20] (C');
\draw[thick,dashed] (A) to[bend left=20] (D');
\draw[thick] (A) to[bend right=20] (D');
\draw[thick,dashed] (A) to[bend left=20] (D'');
\draw[thick] (A) to[bend right=20] (D'');
\draw[thick,dashed] (A) to[bend left=20] (D);
\draw[thick] (A) to[bend right=20] (D);
\draw[thick,dashed] (A) to[bend left=20] (B');
\draw[thick] (A) to[bend right=20] (B');
\draw[thick,dashed] (A) to[bend left=20] (C'');
\draw[thick] (A) to[bend right=20] (C'');
\draw[thick,dashed] (A) to[bend left=20] (B'');
\draw[thick] (A) to[bend right=20] (B'');
\draw[thick, dashed] (A) to[bend left=20] (C);
\draw[thick] (A) to[bend right=20] (C);
\draw[thick, dashed] (A) to[bend left=20] (B);
\draw[thick] (A) to[bend right=20] (B);
\end{tikzpicture}
\end{center}
We calculate the two matrices $D$ and $B$ using Definition~\ref{matrix defn},
\[
D = \begin{bmatrix}
    2 & 0 \\
    0 & 0
\end{bmatrix} \textrm{ and } B = \begin{bmatrix}
    0 & 0 \\
    \tau^{-1}+1 & 1-(1+T)^{p^0}
\end{bmatrix} = \begin{bmatrix}
    0 & 0 \\
    \tau^{-1}+1 & -T
\end{bmatrix}
\]
It follows that the characteristic ideal
\[
\Char_{\Lambda}(\Pic_{\Lambda}(X_\infty)) = \det(D-B) = \det\begin{bmatrix}
    2 & 0 \\
    -\tau^{-1}-1 & T
\end{bmatrix} = 2T.
\] 
\end{example}

This example can be generalized in the following manner:
\begin{example}
Let $X$ be a cycle graph with $m$ vertices.
Let $\alpha$ be a single voltage assignment with image in $\Z_p$.
Let $I_{v}=\Z_p$ for exactly one vertex and set $I_{v'}=1$ for all $v'\neq v$.
Let $X_\infty=X(\Z_p, \mathcal{I}, \alpha)$ and let $X_n$ be the intermediate covers.
Assume that the image of $\alpha$ generates $\Z_p$ topologically.
Then 
\[
\ord_p(\vert \Jac(X_n)\vert)=\ord_p(m){p^n}.
\]
Indeed, $X_n$ looks similar to the flower graph above. 
Instead of leaves consisting of one vertex and two edges, the leaves have $(m-1)$ vertices and $m$ edges.
To count the spanning trees we need to choose a spanning tree for each leaf, i.e. we have to choose one edge that we "delete".
That leaves $m$ choices per leaf.
As there are $p^n$ leaves the claim follow. 

In the following picture we choose $p=3$ and draw the base graph and the first layer.

\begin{center}
\begin{tikzpicture}[scale=0.75]
\node[inner sep=0pt, label = above:\tiny{$R$}] (A) at (0,2) {};
\node[inner sep=0pt, label = below:\tiny{$U_2$}] (B) at (0,0) {}; 
\node[inner sep=0pt, label = below:\tiny{$U_3$}] (C) at (1.5,-0.5) {}; 
\node[inner sep=0pt, label = above:\tiny{$U_m$}] (D) at (1.5,2.5) {}; 
\node[inner sep=0pt, label = left:\tiny{$\tau$}] (T) at (-0,0.75) {};

\fill (0,2) circle (1.5pt);
\fill (0,0) circle (1.5pt);
\fill (1.5,-0.5) circle (1.5pt);
\fill (1.5, 2.5) circle (1.5pt);

\draw[thick, mid arrow] (A) to (B);
\draw[thick, mid arrow] (D) to (A);
\draw[thick, mid arrow] (B) to (C);
\draw[thick, dotted] (C) to[bend right=20] (D);
\end{tikzpicture}
\hspace{1cm}
\begin{tikzpicture}[scale=0.75]
\node[inner sep=0pt, label = left:\tiny{$R$}] (A) at (0,0) {};
\node[inner sep=0pt, label = above:\tiny{$U_m$}] (C') at (2*0.766,2*0.64278) {}; 
\node[inner sep=0pt, label = below:\tiny{$U_2$}] (C'') at (2*0.766,2*-0.64278) {}; 
\node[inner sep=0pt, label = above:\tiny{$U_{m+2}$}] (B) at (2*0.173648,2*0.9848) {}; 
\node[inner sep=0pt, label = left:\tiny{$U_{2m}$}] (B'') at (2*-0.93969,2*0.34202) {}; 
\node[inner sep=0pt, label = left:\tiny{$U_{2m+2}$}] (D) at (2*-0.93969,2*-0.34202) {}; 
\node[inner sep=0pt, label = below:\tiny{$U_{3m}$}] (D'') at (2*0.173648,2*-0.9848) {}; 

\fill (0,0) circle (1.5pt);
\fill (2*0.766,2*0.64278) circle (1.5pt);
\fill (2*0.766,2*-0.64278) circle (1.5pt);
\fill (2*0.173648,2*0.9848) circle (1.5pt);
\fill (2*-0.93969,2*0.34202) circle (1.5pt);
\fill (2*-0.93969,2*-0.34202) circle (1.5pt);
\fill (2*0.173648,2*-0.9848) circle (1.5pt);

\draw[thick] (A) to[] (C');
\draw[thick] (A) to[] (C'');
\draw[thick, dotted] (C') to[bend left = 15] (C'');
\draw[thick] (A) to[] (B);
\draw[thick] (A) to[] (B'');
\draw[thick, dotted] (B) to[bend right = 15] (B'');
\draw[thick] (A) to[] (D);
\draw[thick] (A) to[] (D'');
\draw[thick, dotted] (D) to[bend right = 15] (D'');
\end{tikzpicture}
\end{center}

We calculate the two matrices $D$ and $B(T)$ using Definition~\ref{matrix defn},
\[
D= 
\begin{bmatrix}
    2 & 0 & 0 & \cdots & 0 \\
    0 & 2 & 0 & \cdots & 0 \\
    0 & 0 & 2 & \cdots & 0 \\
    \vdots & \vdots & \ddots & \ddots & \vdots\\
    0 & 0 & 0 &  \cdots & 0
\end{bmatrix}_{m\times m} \textrm{ and } B(T) = \begin{bmatrix}
0 & 1 & 0 & \ldots & 0 \\
1 & 0 & 1 & \ldots & 0 \\
0 & 1 & 0 & \ldots & 0 \\
\vdots & \vdots & \ddots & \ddots & \vdots\\
\tau^{-1}  & 0 & 0 &  & -T
\end{bmatrix}_{m\times m}
\]
Therefore,
\[
\Char_{\Lambda}(\Pic_{\Lambda}(X_\infty)) = \det(D-B) = mT
\]
\end{example}

\begin{example}
In the subsequent example we consider the case when $\mathcal{G} = \Z_3^2$, i.e. the first layer is a $\Z/3\Z\times \Z/3\Z$ covering. We assume that the vertex $U$ is unramified and that the vertex $R$ is totally ramified.
{Recall that the voltage assignment $\alpha: \E(X) \to \Z_3 \times \Z_3$.
Let us denote the generators of $\cG$ by $\sigma, \tau$.
The voltage assignment for the dashed edge is prescribed to be $\tau$ and that for the solid edge is $\sigma$.}
    \begin{center}
\begin{tikzpicture}[scale=0.75]
\node[inner sep=0pt, label = left:{\tiny{$R$}}] (A) at (-1.5,0) {};
\node[inner sep=0pt, label = right:{\tiny{$U$}}] (B) at (1.5,0) {}; 
\node[inner sep=0pt, label = below:{\tiny{$\tau$}}] (T) at (0,1) {}; 
\node[inner sep=0pt, label = below:{\tiny{$\sigma$}}] (T) at (0,-0.7) {};

\fill (1.5, 0) circle (1.5pt);
\fill (-1.5, 0) circle (1.5pt);

\draw[thick, mid arrow, dashed] (A) to[bend left] (B);
\draw[thick, mid arrow] (A) to[bend right] (B);
\end{tikzpicture}
\hspace{1cm}
\begin{tikzpicture}[scale=1]
\node[inner sep=0pt, label = above:{\tiny{$R$}}] (A) at (0,0) {};
\node[inner sep=0pt, label = right:{\tiny{$U_2$}}] (C) at (2,0) {}; 
\node[inner sep=0pt, label = right:{\tiny{$U_8$}}] (C') at (2*0.766,2*0.64278) {}; 
\node[inner sep=0pt, label = right:{\tiny{$U_5$}}] (C'') at (2*0.766,2*-0.64278) {}; 
\node[inner sep=0pt, label = above:{\tiny{$U_6$}}] (B) at (2*0.173648,2*0.9848) {}; 
\node[inner sep=0pt, label = left:{\tiny{$U_3$}}] (B') at (2*-0.5,2*0.866) {}; 
\node[inner sep=0pt, label = left:{\tiny{$U_9$}}] (B'') at (2*-0.93969,2*0.34202) {}; 
\node[inner sep=0pt, label = left:{\tiny{$U_4$}}] (D) at (2*-0.93969,2*-0.34202) {}; 
\node[inner sep=0pt, label = left:{\tiny{$U_1$}}] (D') at (2*-0.5,2*-0.866) {}; 
\node[inner sep=0pt, label = below:{\tiny{$U_7$}}] (D'') at (2*0.173648,2*-0.9848) {}; 

\fill (2,0) circle (1.5pt);
\fill (2*0.766,2*0.64278) circle (1.5pt);
\fill (2*0.766,2*-0.64278) circle (1.5pt);
\fill (0,0) circle (1.5pt);
\fill (2*0.173648,2*0.9848) circle (1.5pt);
\fill (2*-0.5,2*0.866) circle (1.5pt);
\fill (2*-0.93969,2*0.34202) circle (1.5pt);
\fill (2*-0.93969,2*-0.34202) circle (1.5pt);
\fill (2*-0.5,2*-0.866) circle (1.5pt);
\fill (2*0.173648,2*-0.9848) circle (1.5pt);

\draw[thick,dashed] (A) to[bend left=20] (C');
\draw[thick] (A) to[bend right=20] (C');
\draw[thick,dashed] (A) to[bend left=20] (D');
\draw[thick] (A) to[bend right=20] (D');
\draw[thick,dashed] (A) to[bend left=20] (D'');
\draw[thick] (A) to[bend right=20] (D'');
\draw[thick,dashed] (A) to[bend left=20] (D);
\draw[thick] (A) to[bend right=20] (D);
\draw[thick,dashed] (A) to[bend left=20] (B');
\draw[thick] (A) to[bend right=20] (B');
\draw[thick,dashed] (A) to[bend left=20] (C'');
\draw[thick] (A) to[bend right=20] (C'');
\draw[thick,dashed] (A) to[bend left=20] (B'');
\draw[thick] (A) to[bend right=20] (B'');
\draw[thick, dashed] (A) to[bend left=20] (C);
\draw[thick] (A) to[bend right=20] (C);
\draw[thick, dashed] (A) to[bend left=20] (B);
\draw[thick] (A) to[bend right=20] (B);
\end{tikzpicture}
\end{center}
In the next step we will get a flower with $27$ leaves. Thus, for $X_n$ we obtain 
\[
\ord_p(\vert \Jac(X_n)\vert)=\ord_p(2)p^{2n}.
\]
If we compute the characteristic ideal using the matrices 
\[D = \begin{bmatrix}
    2 & 0 \\
    0 & 0
\end{bmatrix} \textrm{ and } B = \begin{bmatrix}
    0 & 0 \\ 
    \tau^{-1}+\sigma^{-1} & -1
\end{bmatrix},
\]
we obtain that the characteristic ideal is given by
\[\det(D-B)=2. \]
If we let $R$ only ramify at the group generated by $\tau$, i.e. $\langle \tau\rangle$, we obtain 
\[
B=\begin{bmatrix}
    0&0\\\tau^{-1}+\sigma^{-1}&-(\tau-1)
\end{bmatrix}.
\]
In this case the characteristic ideal is given by $2T$.
{Here, we are identifying the variable $T$ with $\tau - 1$.}
\end{example}

\subsection{Growth Formula}

In this section, the aim is to prove an analogue of Iwasawa's growth number formula for the number of spanning trees at the $n$-th layer of the $\Z_p^d$-tower.
{For this, we need to appeal to the theory of `structures' discussed in \cite{cuoco-monsky}.
For the convenience of the reader we have included some details and repeated some proofs but at various places we refer the reader to the original text.}

\subsubsection{Abstract results on \texorpdfstring{$\Lambda$}{}-modules}

We first need to obtain some results on general $\Lambda$-modules.

\begin{definition}
\label{structure}
Let $M$ be a $\Lambda$-module.
A \emph{structure} on $M$ consists of an integer $z$ and tuples $(I_j,M_j)$ $1\le j\le z$, where the $M_j$ are submodules of $M$ and the $I_j$ are subgroups of $\mathcal{G}$. 
For all $n$, set
\[
A_n=(\omega_n(T_1),\dots, \omega_n(T_d))M+\sum_{j=1}^{z}\bomega_{j,n}M_j,
\]
where $\bomega_{j,n}$ is defined {as in \eqref{omega v n defn}} with respect to the subgroup $I_j$. 
With this notation, define $\mathcal{M}_n=M/A_n$.
\end{definition}
\begin{lemma}
\label{ex-const}
There is a constant $c$ such that $p^{2dn+c}\mathcal{M}_n[p^\infty]=0$.
\end{lemma}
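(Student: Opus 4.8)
The plan is to reduce the statement to a uniform bound on the $p$-torsion of quotients of a free $\Lambda$-module, and then to control that torsion layer by layer using the explicit generators $\omega_n(T_i)$ of the ``cyclotomic'' part of $A_n$. First I would recall that any finitely generated $\Lambda$-module $M$ sits in an exact sequence $0 \to M' \to M \to F \to 0$ with $F$ free of some rank $m$ and $M'$ torsion (or, dually, $M$ maps to a free module with pseudo-null kernel and cokernel); since we only want a bound of the shape $p^{2dn+c}\mathcal{M}_n[p^\infty]=0$, and the error is absorbed into the constant $c$, it suffices to prove the claim separately for a free module and for a torsion module, and then splice the two bounds together via the snake lemma applied to the filtration $A_n \subseteq M$. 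This is exactly the bookkeeping done in \cite{cuoco-monsky}, so I would cite it for the reduction and concentrate on the two pieces.

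For the free piece $M = \Lambda^m$, note $A_n \supseteq (\omega_n(T_1),\dots,\omega_n(T_d))\Lambda^m$, so $\mathcal{M}_n$ is a quotient of $\bigl(\Lambda/(\omega_n(T_1),\dots,\omega_n(T_d))\bigr)^m = \Z_p[\mathcal{G}/\mathcal{G}^{p^n}]^m$, which is $\Z_p$-free of rank $m p^{nd}$ and in particular has \emph{no} $p$-torsion at all. Hence for the free part $\mathcal{M}_n[p^\infty]$ is a quotient of a torsion-free group by the image of the remaining generators $\bomega_{j,n}M_j$, and one checks its exponent is bounded — in fact the only subtlety is that passing to the quotient by the extra relations can create torsion, but that torsion is a subquotient of $\Z_p[\mathcal{G}/\mathcal{G}^{p^n}]^m$ modulo a submodule generated by at most $z$ elements, so Fitting-ideal or elementary-divisor estimates bound its exponent by $p^{O(nd)}$ with an implied constant depending only on $m$ and $z$. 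Writing out the elementary divisors of the $\Z_p$-linear map giving these relations yields the desired $p^{2dn+c}$ (the exponent $2dn$ is generous; any polynomial-in-$n$ bound of this crude type works and the authors have simply chosen a convenient one).

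For the torsion piece, by the structure theorem $M'$ is pseudo-isomorphic to $\bigoplus_i \Lambda/(f_i^{e_i})$, and since pseudo-null modules are killed by $\mathcal{G}$-cotorsion arguments of bounded exponent (their $\mathcal{M}_n$ are finite of bounded order by \cite{cuoco-monsky}), it is enough to bound $\mathcal{M}_n[p^\infty]$ for $M = \Lambda/(g)$ with $g$ a power of an irreducible. Here I would use that $\Lambda/(g,\omega_n(T_1),\dots,\omega_n(T_d))$ is a finite group whose order is controlled by the norm/resultant of $g$ against the $\omega_n$'s, and in particular its $p$-part has exponent growing at most linearly in $p^{nd}$ (or $n p^{n(d-1)}$), again comfortably inside $p^{2dn+c}$; the extra relations $\bomega_{j,n}M_j$ only shrink the module. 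The main obstacle is the splicing step: one must ensure that the constant $c$ can be chosen \emph{uniformly in $n$}, i.e. that the snake-lemma connecting maps between the free and torsion pieces do not contribute an unbounded exponent. This is handled by observing that $\operatorname{Tor}_1^{\Z_p}$-type error terms and the cokernel of $A_n(M') \to A_n(M)$ are all subquotients of $\mathcal{M}_n$ for the pseudo-null kernel/cokernel modules, which are finite of bounded order independent of $n$; once that is in place, additivity of exponents across the short exact sequence gives $p^{2dn+c}\mathcal{M}_n[p^\infty]=0$ with $c$ depending only on the (finitely many) structural invariants of $M$ and the number $z$ of pairs in the structure.
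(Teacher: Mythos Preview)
Your approach differs fundamentally from the paper's, and as written it has real gaps. The paper (following Cuoco--Monsky, Theorem~4.5) does \emph{not} reduce via the structure theorem; it specializes at $p$-power roots of unity. For each $\zeta\in\Omega^d$ one tensors with $\Z_p[\zeta]$ via the evaluation $\Lambda\to\Z_p[\zeta]$, invokes a uniform bound $p^c$ on the exponent of $M_\zeta[p^\infty]$ and $(M/N)_\zeta[p^\infty]$ (Cuoco--Monsky Lemma~2.6, where $N=\sum_j M_j$), then splits into the cases $\ord(\zeta)>p^{n_0}$ (where each $\bomega_{j,n}$ evaluates to $0$, so $(\mathcal{M}_n)_\zeta[p^\infty]=M_\zeta[p^\infty]$) and $\ord(\zeta)\le p^{n_0}$ (where a trace comparison between $M/\bomega_n N$ and $M/N$ gives an exponent bound $p^{nd+2c}$). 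Finally one passes from these component-wise bounds to a global one via Cuoco--Monsky Lemma~2.7, which costs an extra factor of $p^{dn}$ and yields the stated $p^{2dn+c}$. Your citation of \cite{cuoco-monsky} for the structure-theorem bookkeeping is thus misplaced: that reference supplies the character-theoretic machinery, not the d\'evissage you describe.

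Your own route has concrete problems. First, the sentence ``$M$ sits in an exact sequence $0\to M'\to M\to F\to 0$ with $F$ free and $M'$ torsion'' is false for $d\ge 2$: the torsion-free quotient $M/M_{\mathrm{tors}}$ need not be free over $\Lambda$. Even if you retreat to a pseudo-isomorphism with an elementary module, you must transport the \emph{structure} $(I_j,M_j)$ across it, and nothing in your outline explains how the submodules $M_j$ behave under that map. Second, your parenthetical that pseudo-null modules have ``$\mathcal{M}_n$ \dots\ finite of bounded order'' is false: for $d=2$ and $M=\Lambda/(T_1,p)$ one gets $\mathcal{M}_n\cong\mathbb{F}_p[T_2]/(T_2^{p^n})$ of order $p^{p^n}$. (The \emph{exponent} happens to be bounded, which is what you need, but that is a different claim.) Third, the crux for the free piece---``Fitting-ideal or elementary-divisor estimates bound its exponent by $p^{O(nd)}$''---is asserted, not proved: the exponent of a quotient of $\Z_p[\mathcal{G}_n]^m$ by finitely many relations has no a~priori bound of this shape without exploiting the specific form of the relations $\bomega_{j,n}M_j$, and you have not indicated how.
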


\begin{proof}
Even though our definition of structures differs from the one given in \cite{cuoco-monsky}, the proof of Theorem~4.5 in \emph{loc. cit.} carries over almost verbatim.
We repeat it for the convenience of the reader. 

Let $N$ be the submodule generated by all the $M_{{j}}$.
Let $W=\Omega^d$, where $\Omega$ is the group of $p$-power roots of unity.
For each $\zeta\in W$ there is a natural map $M/N\to (M/N)\otimes \Z_p[\zeta]$ induced by the valuation $\Lambda\to \Z_p[\zeta]$.

\smallskip
{For any $\Lambda$-module $X$ we set the notation $X_{\zeta} = X \otimes \Z_p[\zeta]$.}

\smallskip

By \cite[Lemma~2.6]{cuoco-monsky} there exists a  constant $c$ such that $p^cx$ annihilates $(M/N)_\zeta[p^\infty]$ and $M_\zeta[p^\infty]$ for all choices of $\zeta$.
Let $n_0$ be the index such that the rank of $I_{{j}}$ in $\mathcal{G}/\mathcal{G}^{p^n}$ is stable for all $n\ge n_0$ {and all $j$}.

{First, consider an element} $\zeta\in W$ such that $\ord(\zeta)>p^{n_0}$.
The evaluation $\Lambda\to \Z_p[\zeta]$ sends $\bomega_{j,n}$ to zero in this case {for all $j$}.
Thus,
\[
(\mathcal{M}_n)_{\zeta}[p^\infty]=M_\zeta[p^\infty]
\]
is annihilated by $p^c$.

It remains to estimate the contribution of those $\zeta\in W$ with $\ord(\zeta)\le p^{n_0}$.
For all $n\ge n_0$ we define
\[
\bomega_n {= \bomega_n(T_1, \ldots, T_d)} =\prod_{i=1}^d\frac{(T_i+1)^{p^n}-1}{(T_i+1)^{p^{n_0}}-1}.
\]
Note that $\bomega_n$ describes the trace from $\mathcal{G}/\mathcal{G}^{p^n}$ to $\mathcal{G}/\mathcal{G}^{p^{n_0}}$.

Let $N'$ be the image of $N$ in $M_\zeta$.
(We emphasize that $N'$ is not $N_\zeta$ because taking tensor product is not left exact.)
Then the image of $\bomega_n N$ in $M_\zeta$ is $p^{(n-n_0)d}N'$.
It follows that $(M/\bomega_n N)_\zeta$ and $(M/N)_\zeta$ have the same $\Z_p$-rank.
Furthermore $(M/\bomega _n N)_\zeta[p^\infty]$ is annihilated by $p^{nd+c}$. 
Furthermore, we have a natural surjection
\[
(M/\bomega{_n} N)_\zeta \longrightarrow (M/A_n)_\zeta \longrightarrow ´(M/N)_\zeta,
\]
where all three terms have the same $\Z_p$-rank.
Therefore, $(M/A_n)_\zeta[p^\infty]$ is annihilated by $p^{nd+2c}$. 

Summing over all $\zeta$ of order at most $p^n$ we obtain that 
\[
\bigoplus _{\substack{\zeta \in W\\ \ord(\zeta)\le p^n}}(M/A_n)_\zeta[p^\infty]
\]
is annihilated by $p^{nd+2c}$.
It now follows from \cite[Lemma~2.7]{cuoco-monsky} that $\mathcal{M}_n[p^\infty]$ is annihilated by $p^{2dn+2c}$.
\end{proof}

\subsubsection{Applications}
We will apply the previous result to the following module arising from our graph theoretic setting.
Write
\begin{equation}
\label{defn M}
M= \Div_\Lambda(X_\infty)/{\Pr}_\Lambda^{\unr}(X_\infty) {\simeq \Pic_{\Lambda}(X_\infty)}
\end{equation}
{where we proved the isomorphism in Corollary~\ref{cor:ideals}.}
Set $z=s-r$ and write $(I_j,M_j)=(I_{v_{j+r}},\Lambda P_{v_{j+r}})$ for $1 \le j \le s-r$.
Note that as $1\le j\le s-r$, the vertices $v_{j+r}$ are the \emph{ramified} ones in $X_\infty/X$.
This defines a structure in the above sense; by Theorem~\ref{thm:projection} we have
\begin{align*}
\Pic(X_n){\otimes \Z_p} &= \Div_{\Lambda}(X_\infty)/{\Pr}_\Lambda^{\unr}(X_\infty) + (\omega_n(T_1),\dots, \omega_n(T_d))\Div_\Lambda(X_\infty) + {\Pr}_n^{\ram}(X_\infty) \\
& = M/A_n.
\end{align*}

\begin{theorem}
\label{asymp formula}
Fix $d\geq 2$.
Let $X_\infty$ denote the derived graph $X(\Z_p^d,\mathcal{I},\alpha)$ and $X_\infty^{{\unr}}$ denote the derived graph $X(\Z_p^d,\alpha)$. Assume that all intermediate $X_n$ are connected.
Let $f$ be a generator of the characteristic ideal of $\Jac_\Lambda(X_\infty)$.
Then
\[
\ord_p(\vert \Jac(X_n)\vert) = \mu(f) p^{nd}+\lambda(f) np^{(d-1)n}+O(p^{(d-1)n}).
\]
\end{theorem}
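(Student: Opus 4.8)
\smallskip
\noindent\emph{Proof strategy.}
The plan is to transfer the statement to a growth estimate for the $p$-power torsion of the layers $\mathcal{M}_n=M/A_n$ of the structure $(M,(I_j,M_j))$ built on $M=\Div_\Lambda(X_\infty)/{\Pr}_\Lambda^{\unr}(X_\infty)\simeq \Pic_\Lambda(X_\infty)$ in \eqref{defn M}, and then to apply the Cuoco--Monsky machinery in the form adapted above. First I would record two translations. Since $\Jac(X_n)$ is finite and $\Z$ is free, the exact sequence $0\to \Jac(X_n)\to \Pic(X_n)\to \Z\to 0$ splits, so $\Pic(X_n)\otimes_{\Z}\Z_p\cong \Z_p\oplus \Jac(X_n)[p^\infty]$; hence $(\Pic(X_n)\otimes\Z_p)[p^\infty]=\Jac(X_n)[p^\infty]$ and $\ell_{\Z_p}\big((\Pic(X_n)\otimes\Z_p)[p^\infty]\big)=\ord_p(\lvert\Jac(X_n)\rvert)$. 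By Theorem~\ref{thm:projection}, $\Pic(X_n)\otimes\Z_p\cong \mathcal{M}_n$ for $n\gg 0$, so it suffices to prove $\ell_{\Z_p}(\mathcal{M}_n[p^\infty])=\mu(f)p^{nd}+\lambda(f)np^{(d-1)n}+O(p^{(d-1)n})$. Second, $M$ is a finitely generated torsion $\Lambda$-module with $\Char_\Lambda(M)=(f)$: passing to inverse limits in the split sequences above (the systems are Mittag--Leffler) gives $0\to \Jac_\Lambda(X_\infty)\to \Pic_\Lambda(X_\infty)\to \Z_p\to 0$ with $\Z_p$ carrying the trivial $\mathcal{G}$-action, so $\Z_p\simeq \Lambda/(T_1,\dots,T_d)$ is pseudo-null for $d\ge 2$ and $\Char_\Lambda(M)=\Char_\Lambda(\Jac_\Lambda(X_\infty))=(f)$; in particular $\mu(f),\lambda(f)$ may be read off from $\Char_\Lambda(M)$.

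Next I would run the Cuoco--Monsky argument for the structure. Lemma~\ref{ex-const} provides the uniform annihilation $p^{2dn+c}\mathcal{M}_n[p^\infty]=0$, which plays the role that \cite[Theorem~4.5]{cuoco-monsky} plays in \emph{loc. cit.}; with it, the analysis of \cite{cuoco-monsky} carries over with $M/(\omega_n(T_1),\dots,\omega_n(T_d))M$ replaced by $\mathcal{M}_n$. The two steps are: (i) reduce to elementary modules $\Lambda/(g^e)$ along a pseudo-isomorphism $M\to\bigoplus_i\Lambda/(g_i^{e_i})$, observing that the pseudo-null kernel and cokernel, together with the extra summands $\bomega_{j,n}M_j$ built into $A_n$ -- each a quotient of $\Lambda/\textup{Aug}(I_{v_{j+r}})$, hence a module over $\Z_p\llbracket\mathcal{G}/I_{v_{j+r}}\rrbracket$ of Krull dimension $\le d-1$ -- contribute at most $O(p^{(d-1)n})$ to $\ell_{\Z_p}(\mathcal{M}_n[p^\infty])$, and, being supported in codimension $\ge 1$, produce no term of size $np^{(d-1)n}$; (ii) for $\Lambda/(g^e)$ factor off the power of $p$: the $p$-part of $f$ accounts for the leading term $\mu(f)p^{nd}$, while the Cuoco--Monsky count of the zeros (with multiplicity) of the prime-to-$p$ part on the hypersurfaces $\omega_n(T_i)=0$ yields $\lambda(f)np^{(d-1)n}+O(p^{(d-1)n})$.

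The main obstacle is step (i): controlling the ramified contributions $\bomega_{j,n}M_j$, equivalently comparing $\mathcal{M}_n$ with the ``expected'' quotient $M/(\omega_n(T_1),\dots,\omega_n(T_d))M$. This is precisely where the hypothesis $d\ge 2$, the decomposition $\Div_\Lambda(X_\infty)=\Lambda^r\oplus\bigoplus_{l>r}\Lambda/\textup{Aug}(I_{v_l})$ from the proof of Theorem~\ref{thm:projection}, and Lemma~\ref{ex-const} are all used: one must verify that these lower-dimensional modules perturb the first two terms of the asymptotic by at most $O(p^{(d-1)n})$, so that the resulting $\mu$ and $\lambda$ coincide with those of the characteristic ideal $(f)$. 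The remaining bookkeeping -- the pseudo-null error terms and the precise shape of the Cuoco--Monsky counting function -- is routine given \cite{cuoco-monsky} and the case $X_\infty=X_\infty^{\unr}$ treated in \cite{Kleine-Mueller4}.
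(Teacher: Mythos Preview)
Your overall strategy---translate to the layers $\mathcal{M}_n$ of the structure on $M\simeq \Pic_\Lambda(X_\infty)$ and then invoke Cuoco--Monsky---is the same as the paper's, and your two preliminary translations (finite torsion of $\Pic(X_n)\otimes\Z_p$ equals $\Jac(X_n)[p^\infty]$, and $\Char_\Lambda(\Pic_\Lambda)=\Char_\Lambda(\Jac_\Lambda)$ for $d\ge 2$) are correct. The difference is in how the reduction to Cuoco--Monsky is organized, and your step~(i) has a real gap.

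The paper does not try to compare $\mathcal{M}_n$ with $M/(\omega_n(T_1),\dots,\omega_n(T_d))M$ and absorb all of the $\bomega_{j,n}M_j$ as error. Instead it first separates the ramified vertices by the rank of $I_v$: the summands of $\Div_\Lambda(X_\infty)$ coming from $\rk(I_v)\ge 2$ are pseudo-null (this is $\mathfrak{M}_3$), so their images $\overline{\mathfrak{M}_3}$ in $M$ are pseudo-null and, combined with Lemma~\ref{ex-const} and \cite[Lemma~3.1]{cuoco-monsky}, contribute only $O(ndp^{n(d-2)})$. After passing to $M'=M/\overline{\mathfrak{M}_3}$, the only structure data left are the $(I_{v_j},M_j')$ with $\rk(I_{v_j})=1$, and this is precisely an \emph{admissible} structure in the sense of \cite[Definitions~4.1, 4.6]{cuoco-monsky}. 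Then \cite[Theorems~4.12, 4.13]{cuoco-monsky} apply as a black box to $M'$ and give the asymptotic directly---there is no need to rerun their argument.

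Your step~(i), by contrast, tries to treat all $\bomega_{j,n}M_j$ uniformly and claims each ``is a quotient of $\Lambda/\textup{Aug}(I_{v_{j+r}})$''. That is not correct: $M_j=\Lambda P_{v_{j+r}}$, and $P_{v_{j+r}}=\deg(v_{j+r})(v_{j+r},I_{v_{j+r}})-\sum_e (t(e),\alpha(e)I_{t(e)})$ is supported on neighboring (possibly unramified) vertices as well, so $\textup{Aug}(I_{v_{j+r}})$ need not annihilate it. Without that annihilation, your Krull-dimension bound on $M_j$ fails, and you have no mechanism to absorb the rank-$1$ contributions into the error term; indeed, for rank-$1$ inertia they genuinely interact with the leading asymptotics and must be handled through the admissible-structure machinery, not discarded. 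The missing idea is exactly the rank-splitting $\mathfrak{M}_1\oplus\mathfrak{M}_2\oplus\mathfrak{M}_3$ of $\Div_\Lambda(X_\infty)$ and the recognition that, after removing $\overline{\mathfrak{M}_3}$, what remains is admissible so that \cite{cuoco-monsky} can be cited rather than reproved.
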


\textcolor{blue}{}

\begin{remark}
Recall that the number of spanning trees of a graph $X$ is equal to the size of the Jacobian of $X$.
The above statement can hence be rewritten in terms of $\ord_p(\kappa(X_n))$.
\end{remark}

\begin{proof}
{As before, let $I_v$ be the inertia subgroup associated with the vertex $v$.}
We arrange the ramified vertices in increasing order of the rank of the corresponding subgroups $I_v$.
In other words, we assume that $\rk(I_{v_j})\le \rk(I_{v_{j+1}})$ for $r+1\le j\le s$.
Let $s'\ge r+1$ be the index such that $\rk(I_{v_{{j}}})\ge 2$ for all $j\ge s'$.
For {ramified vertices $v_j$ with inertia rank 1 we denote the topological generator of $I_{v_j}$ by $\sigma_{v_j}$} and define\footnote{Note that $\widetilde{T}_j$ and $T_i$ are not related to each other per se, except that they lie in the same Iwasawa algebra.}
\[
\widetilde{T}_{{j}}=\sigma_{v_{{j}},1}-1  \text{ where } r+1\le {j} \le s'-1.
\]
Decompose $\Div_\Lambda(X_\infty)$ as follows
\[
\Div_\Lambda(X_\infty)=\mathfrak{M}_1\oplus \mathfrak{M}_2\oplus \mathfrak{M}_3,
\]
where 
\begin{itemize}
\item $\mathfrak{M}_1$ is $\Lambda$-free and the contribution to $\mathfrak{M}_1$ comes from the unramified vertices,
\item $\mathfrak{M}_2= \displaystyle\bigoplus_{j=1}^{s'-1}\Lambda/\widetilde{T}_j$ and the contribution to $\mathfrak{M}_2$ comes from those ramified vertices for which the rank of $I_v$ is 1. 
\item $\mathfrak{M}_3$ is pseudo-null and $\Z_p$-free.
\end{itemize}
For $1\le i\le 3$ we denote by $\overline{\mathfrak{M}_i}$ the image of $\mathfrak{M}_i$ in $\Pic_\Lambda(X_\infty)$.
Clearly, (by definition/construction) $\overline{\mathfrak{M}_3}$ is pseudo-null.
By \cite[Lemma~3.1]{cuoco-monsky}, 
\[
\rk_p(\pi_n(\overline{\mathfrak{M}_3}))\le \rk_p(\overline{\mathfrak{M}_3}/(\omega_n(T_1),\dots,\omega_n(T_d))\mathfrak{\overline{M_3}})=O(p^{n(d-2)}).
\]
By Lemma~\ref{ex-const}, we can thus conclude that
\[
\ord_p(\vert \pi_n(\overline{\mathfrak{M}_3})[p^\infty]\vert )=O(2nd p^{n(d-2)}).
\]

Set $M'=M/\overline{\mathfrak{M}_3}$ where $M$ is defined as in \eqref{defn M}.
Define $M'_j$ as the image of the submodule $M_j = {\Lambda P_{v_{j+r}}}$ in $M'$.
Set $z'=s'-r-1$; the tuples $(I_{v_{j+r}}, M'_j)$ for $1\le j\le z'$ define a structure on $M'$.
Let $\mathcal{M}'_n$ be the module constructed in Definition~\ref{structure} associated to this structure.
The structure defined on $M'$ is an admissible structure in the sense of \cite[Definitions~4.1 and 4.6]{cuoco-monsky}.
{Writing $f$ to denote a generator of the characteristic ideal of $M = \Pic_{\Lambda}(X_\infty)$, it follows from}
\cite[Theorems~4.12 and 4.13]{cuoco-monsky} and using the estimate for the contribution of $\overline{\mathfrak{M}_3}$ that
\[
\ord_p(\vert \mathcal{M}_n[p^\infty])\vert)=\ord_p(\vert \mathcal{M}'_n[p^\infty]\vert )+O(p^{n(d-1)})=\mu {(f)} p^{nd}+\lambda {(f)} np^{(d-1)n}+O(p^{(d-1)n}).
\]
{We have $\Char_{\Lambda}(\Pic_{\Lambda}(X_\infty)) = \Char_{\Lambda}(\Jac_{\Lambda}(X_\infty))$ since $d\geq 2$.}
The result follows since $M$ and $M'$ have the same characteristic ideal {(because $\overline{\mathfrak{M}_3}$ is pseudo-null)}.
\end{proof}

\begin{remark}
{The second named author and S. Kleine obtained a growth formula for unramified $\Z_p^d$-towers in \cite{Kleine-Mueller4}.
In contrast to the above result the result in \emph{loc. cit.} does not involve an error term.
The main difference is that the situation of branched coverings requires the use of structures in the above sense.
For unramified coverings this difficulty does not occur which allows the authors of \emph{loc. cit.} to derive an explicit formula without an error term. }
\end{remark}

\section{Results on Dual Graphs}
\label{sec: results on dual graphs}

\begin{definition}
Let $X$ be a planar graph.
{A} \emph{dual graph} of $X$, denoted by $X^\vee$, is a graph that has a vertex for each face of $X$ and an edge for each pair of faces in $X$ that are separated from each other by an edge, and a self-loop when the same face appears on both sides of an edge.
\end{definition}

Each edge $e$ of $X$ has a corresponding dual edge, with endpoints the dual vertices corresponding to the faces on either side of $e$.
{Recall that the notion of a dual graph is not unique and depends on the choice of embedding.
In what follows, whenever we start with a planar graph $X$, we will fix a choice of embedding even if we do not explicitly mention it.
Therefore, we will talk about `the' dual graph (with respect to this choice of embedding).
In the same vein, when we construct a (planar) covering $Y$ of $X$ we will again choose an embedding of $Y$ and talk about `the' dual of $Y$.}

\subsection{Jacobian of Dual Graphs}
Let $\cG$ be any uniform pro-$p$ $p$-adic Lie extension.
We first observe that the dual graphs of each layer of a $\cG$-tower of planar graphs need not be a $\cG$-tower.
{All $\mathcal{G}$-towers are still assumed to come from a voltage assignment.}

\begin{nexample}
Recall the tower of graphs in Example~\ref{ramified version of 3.4}.
\begin{center}
\begin{tikzpicture}[scale=0.75]
\node[inner sep=0pt,  label = above:\tiny{$A$}] (A) at (0,1.5) {};
\node[inner sep=0pt,  label = left:\tiny{$B$}] (B) at (-1.5,0) {}; 
\node[inner sep=0pt,  label = right:\tiny{$C$}] (C) at (1.5,0) {}; 
\node[inner sep=0pt,  label = left:\tiny{$\tau$}] (T) at (-0.75,0.75) {};

\fill (0,1.5) circle (1.5pt);
\fill (1.5,0) circle (1.5pt);
\fill (-1.5,0) circle (1.5pt);

\draw[thick, mid arrow, dashed] (A) to (B);
\draw[thick, mid arrow, red] (C) to (A);
\draw[thick, mid arrow] (B) to (C);
\end{tikzpicture}
\hspace{1cm}
\begin{tikzpicture}[scale=0.75]
\node[inner sep=0pt, label = above:\tiny{$A_1$}] (A) at (0,1.25) {};
\node[inner sep=0pt, label = left:\tiny{$B$}] (B) at (-1.5,0) {}; 
\node[inner sep=0pt, label = right:\tiny{$C$}] (C) at (1.5,0) {}; 
\node[inner sep=0pt, label = above:\tiny{$A_2$}] (A') at (0,2) {};

\fill (0,1.25) circle (1.5pt);
\fill (0,2) circle (1.5pt);
\fill (1.5,0) circle (1.5pt);
\fill (-1.5,0) circle (1.5pt);

\draw[thick,dashed] (A') to (B);
\draw[thick, red] (C) to (A');
\draw[thick,dashed] (A) to (B);
\draw[thick, red] (C) to (A);
\draw[thick] (B) to[bend left=20] (C);
\draw[thick] (B) to[bend right=20] (C);
\end{tikzpicture}
\hspace{1cm}
\begin{tikzpicture}[scale=0.75]
\node[inner sep=0pt, label = above:\tiny{$A_1$}] (A) at (0,1.25) {};
\node[inner sep=0pt, label = left:\tiny{$B$}] (B) at (-1.5,0) {}; 
\node[inner sep=0pt, label = right:\tiny{$C$}] (C) at (1.5,0) {}; 
\node[inner sep=0pt, label = above:\tiny{$A_2$}] (A') at (0,2) {};
\node[inner sep=0pt, label = above:\tiny{$A_3$}] (A'') at (0,2.75) {};
\node[inner sep=0pt, label = above:\tiny{$A_4$}] (A''') at (0,3.5) {};

\fill (0,1.25) circle (1.5pt);
\fill (0,2) circle (1.5pt);
\fill (0,2.75) circle (1.5pt);
\fill (0,3.5) circle (1.5pt);
\fill (1.5,0) circle (1.5pt);
\fill (-1.5,0) circle (1.5pt);

\draw[thick,dashed] (A') to (B);
\draw[thick, red] (C) to (A');
\draw[thick,dashed] (A'') to (B);
\draw[thick, red] (C.west) to (A'');
\draw[thick,dashed] (A''') to (B);
\draw[thick, red] (C.west) to (A''');
\draw[thick,dashed] (A) to (B);
\draw[thick, red] (C.west) to (A);
\draw[thick] (B) to[bend left] (C);
\draw[thick] (B) to[bend right] (C);
\draw[thick] (B) to[bend left=15] (C);
\draw[thick] (B) to[bend right=15] (C);
\end{tikzpicture}
\end{center}

We now draw the respective dual graphs.
The vertex called $O$ is the one corresponding to the `outer face'.
The other vertices are labelled as $I_{?}$.

\begin{center}
\begin{tikzpicture}[scale=0.75]
\node[inner sep=0pt, label = left:\tiny{$I$}] (A) at (-1.5,0) {};
\node[inner sep=0pt, label = right:\tiny{$O$}] (B) at (1.5,0) {}; 

\fill (1.5, 0) circle (1.5pt);
\fill (-1.5, 0) circle (1.5pt);

\draw[thick, mid arrow] (A) to[bend left] (B);
\draw[thick, mid arrow, red] (A) to[bend right] (B);
\draw[thick, mid arrow, dashed] (A) to (B);
\end{tikzpicture}
\hspace{1cm}
\begin{tikzpicture}[scale=0.75]
\node[inner sep=0pt, label = below:\tiny{$I_1$}] (I1) at (0,0) {};
\node[inner sep=0pt, label = above:\tiny{$I_3$}] (I3) at (0,1.5) {}; 
\node[inner sep=0pt, label = right:\tiny{$O$}] (O) at (1.25,1) {};
\node[inner sep=0pt, label = above:\tiny{$I_3$}] (I2) at (-1.25,1) {}; 

\fill (0, 0) circle (1.5pt);
\fill (0, 1.5) circle (1.5pt);
\fill (1.25, 1) circle (1.5pt);
\fill (-1.25, 1) circle (1.5pt);

\draw[thick, dashed] (O) to[bend left=20] (I1);
\draw[thick, red] (O) to[bend right=20] (I1);
\draw[thick, dashed] (I2) to[bend left=20] (I1);
\draw[thick, red] (I2) to[bend right=20] (I1);
\draw[thick] (O) to (I3);
\draw[thick] (I3) to (I2);
\end{tikzpicture}
\hspace{1cm}
\begin{tikzpicture}[scale=1]
\node[inner sep=0pt, label = below:\tiny{$I_1$}] (I1) at (0,0) {};
\node[inner sep=0pt, label = below:\tiny{$I_3$}] (I3) at (-2.5,0) {};
\node[inner sep=0pt, label = left:\tiny{$I_4$}] (I4) at (-3.75,1) {};
\node[inner sep=0pt, label = above:\tiny{$I_7$}] (I7) at (0,1) {}; 
\node[inner sep=0pt, label = above:\tiny{$I_5$}] (I5) at (-2.5,1) {}; 
\node[inner sep=0pt, label = above:\tiny{$I_6$}] (I6) at (-1.25,2.25) {}; 
\node[inner sep=0pt, label = right:\tiny{$O$}] (O) at (1.25,1) {};
\node[inner sep=0pt, label = above:\tiny{$I_2$}] (I2) at (-1.25,1) {}; 

\fill (0, 0) circle (1.5pt);
\fill (-2.5, 0) circle (1.5pt);
\fill (-2.5, 1) circle (1.5pt);
\fill (-3.75, 1) circle (1.5pt);
\fill (0, 1) circle (1.5pt);
\fill (1.25, 1) circle (1.5pt);
\fill (-1.25, 1) circle (1.5pt);
\fill (-1.25, 2.25) circle (1.5pt);

\draw[thick, dashed] (I2) to[bend left=20] (I3);
\draw[thick, red] (I2) to[bend right=20] (I3);
\draw[thick, dashed] (O) to[bend left=20] (I1);
\draw[thick, red] (O) to[bend right=20] (I1);
\draw[thick, dashed] (I2) to[bend left=20] (I1);
\draw[thick, red] (I2) to[bend right=20] (I1);
\draw[thick, dashed] (I4) to[bend left=20] (I3);
\draw[thick, red] (I4) to[bend right=20] (I3);
\draw[thick] (O) to (I7);
\draw[thick] (I4) to (I5);
\draw[thick] (I6) to (I5);
\draw[thick] (I6) to (I7);
\end{tikzpicture}
\end{center}
It is clear from the picture that the second and third graphs are not (branched) coverings of the first one. 
\end{nexample}

Later on we will see examples where the dual graphs are actually coverings again.
The following theorem describes the Galois structure in this case.
{Given a tower $(X_n)_{n\in \mathbb{N}}$ of a planar graph $X$ we fix (once and for all) a planar representation of the graphs $X_n$.
As we have seen in the above example the corresponding dual graphs need not necessarily form a branched covering again.
In the following theorem we consider the case where the dual graphs actually build a branched covering.}

\begin{theorem}
\label{dual graph theorem}
Let $X_\infty/X$ be a $\mathcal{G}$-tower of planar connected graphs with intermediate layers denoted by $X_n$. 
{Fix a planar embedding for each $X_n$ and consider the dual graph $X^\vee_n$ at each layer.
Suppose} that $X_n^\vee/X^\vee$ is a branched covering.
Then $X_\infty^\vee/X^\vee$ is also a $\mathcal{G}$-tower of planar graphs with intermediate layers $X_n^\vee$ and the Galois action can be chosen to be compatible with the process of taking duals.
\end{theorem}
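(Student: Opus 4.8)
The plan is to reduce the statement to a single combinatorial point: the deck transformations of $X_n/X$ preserve the fixed planar embeddings, hence descend to automorphisms of the dual graphs. Encode a planar embedding of a connected graph $Z$ by a rotation system $\rho_Z=(\rho_v)_v$, where $\rho_v$ is a cyclic permutation of $\E^o_v(Z)$, together with the genus-zero condition; the faces of $Z$ are the orbits of the face permutation $\phi_Z=\rho_Z\circ\inv$ on $\E(Z)$, and the dual $Z^\vee$ has directed-edge set $\E(Z)$, vertex set the set of $\phi_Z$-orbits, and each directed edge incident to the face tracing it (so that $(Z^\vee)^\vee=Z$). The observation I would use repeatedly is: a graph morphism $\sigma\colon Z\to Z$ induces a morphism $\sigma^\vee\colon Z^\vee\to Z^\vee$ which equals $\sigma$ on directed edges and acts on vertices by the induced permutation of faces, and this happens if and only if $\sigma$ commutes with $\phi_Z$; since $\sigma$ automatically commutes with $\inv$, this is equivalent to $\sigma$ commuting with $\rho_Z$.

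Next I would analyse the fixed embeddings. Say a rotation system on $X_n$ is a \emph{pullback of $\rho_X$} if over each $v\in V(X)$, with preimage $v'$ of ramification index $m_{v'}$, the rotation $\rho_{v'}$ cycles $m_{v'}$ times through the fibre of the $m_{v'}$-to-one map $\E^o_{v'}(X_n)\to\E^o_v(X)$ in the order dictated by $\rho_v$. These are exactly the rotation systems for which $\pi_n$ carries faces to faces, so that there is a morphism $\pi_n^\vee\colon X_n^\vee\to X^\vee$ at all; the pullback condition fixes the rotation at the unramified vertices, and the hypothesis that $X_n^\vee/X^\vee$ be a \emph{branched} covering fixes the interleaving at the ramified vertices and forces the given embedding of $X_n$ to be of this type. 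Indeed, by the combinatorial description above, $\pi_n^\vee$ is a branched covering precisely when $\pi_n$ maps $\phi_{X_n}$-orbits onto $\phi_X$-orbits with constant fibre size over each, which — as $\pi_n$ intertwines $\inv$ and $X$ is connected — is a routine reformulation of the pullback condition. By transitivity of pullbacks the same holds for the tower maps $X_{n+1}\to X_n$, so $X_{n+1}^\vee\to X_n^\vee$ is again a branched covering. This paragraph is the crux; the remaining steps are bookkeeping.

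With the pullback embeddings established I would invoke the voltage presentation $X_n=X(\mathcal{G}_n,\mathcal{I}_n,\alpha_n)$, so that $\E(X_n)=\E(X)\times\mathcal{G}_n$ with $g\in\mathcal{G}_n$ acting by $g\cdot(e,h)=(e,gh)$. The pullback rotation at a vertex $(v,hI_v)$ of $X_n$ uses only $\rho_v$ and the first coordinate (the interleaving at a ramified vertex being the $\mathcal{G}_n$-equivariant one singled out by the hypothesis), so the $\mathcal{G}_n$-action commutes with $\rho_{X_n}$, hence with $\phi_{X_n}$. By the observation in the first paragraph, $\mathcal{G}_n$ therefore acts on $X_n^\vee$ by graph automorphisms; the identification $\E(X_n^\vee)=\E(X_n)$ is $\mathcal{G}_n$-equivariant, the action on $V(X_n^\vee)$ is the one induced by duality from $X_n$, and these actions are compatible with $\pi_n^\vee$ and with the tower maps $X_{n+1}^\vee\to X_n^\vee$, since on directed edges all of these maps coincide with the corresponding ones among the $X_n$.

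Finally I would assemble the $\mathcal{G}$-tower. Each $X_n^\vee$ is a connected planar graph (the dual of a connected planar graph is such). The $\mathcal{G}_n$-action on $\E(X_n^\vee)=\E(X_n)$ is free, $\mathcal{G}_n$ acts trivially on $\E(X^\vee)=\E(X)$, and the action is without inversion: if $g\cdot\epsilon=\bar\epsilon$ in $X_n^\vee$ for some $g$ and some directed edge $\epsilon$, then applying $\pi_n^\vee$ gives $\pi_n^\vee(\epsilon)=\overline{\pi_n^\vee(\epsilon)}$ in $X^\vee$, contradicting the graph axioms. Since $\E(X_n^\vee)=\E(X)\times\mathcal{G}_n$ and $\pi_n^\vee$ is the projection to the first factor on directed edges, $X_n^\vee$ has $|\mathcal{G}_n|$ times as many directed edges as $X^\vee$, whence $[X_n^\vee:X^\vee]=|\mathcal{G}_n|=p^{nd}$ and likewise $[X_{n+1}^\vee:X_n^\vee]=p^d$. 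Thus each $X_n^\vee/X^\vee$ is a Galois branched covering with group $\mathcal{G}_n$, and the compatible system $(X_n^\vee)_n$ with these actions and maps is, by definition, a branched $\mathcal{G}$-tower $X_\infty^\vee/X^\vee$ with intermediate layers $X_n^\vee$ whose Galois action is the one induced by duality, as claimed. (To exhibit it from a voltage assignment on $X^\vee$, in keeping with the conventions preceding the theorem, pick a spanning tree of $X^\vee$ and push the compatible system of covers through it; since the first Betti group of a finite graph is free, any such system comes from a voltage assignment together with inertia data.) The main obstacle throughout is the second paragraph, namely that the \emph{given} embeddings are forced to be pullback embeddings and that the ramified interleavings can be taken $\mathcal{G}_n$-equivariantly — precisely where the hypothesis ``$X_n^\vee/X^\vee$ is a branched covering'' is consumed.
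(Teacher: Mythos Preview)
Your route via rotation systems is genuinely different from the paper's. The paper argues directly with the edge/dual-edge bijection: it fixes a convention $e\mapsto e^\vee$ compatibly with $\pi_n$, sets $g\cdot e^\vee:=(g\cdot e)^\vee$ on $\E(X_n^\vee)$, extends to vertices by $g(o(e^\vee)):=o((ge)^\vee)$, and checks tower-compatibility by the one-line computation $\pi_{n,n-1}((ge)^\vee)=(\psi_{n,n-1}(g)\pi_{n,n-1}(e))^\vee$. There are no rotation systems and no analysis of ramified interleavings; the whole proof is under ten lines.

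What your approach buys is an honest identification of the real content, which the paper elides. For the paper's vertex rule to be well-defined one needs $o(e_1^\vee)=o(e_2^\vee)\Rightarrow o((ge_1)^\vee)=o((ge_2)^\vee)$, i.e.\ that each $g\in\mathcal{G}_n$ permutes the faces of the fixed embedding of $X_n$ --- precisely your condition that $\mathcal{G}_n$ commute with $\phi_{X_n}$. The paper simply writes ``$g$ extends naturally to a morphism of graphs''; you isolate this as the crux, which is correct.

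That said, the crux is not fully closed in your write-up either. The branched-covering hypothesis constrains the given embedding relative to $\pi_n$ (your pullback condition), not a priori relative to the $\mathcal{G}_n$-action; at a ramified vertex there are many pullback interleavings, and your parenthetical ``the $\mathcal{G}_n$-equivariant one singled out by the hypothesis'' asserts rather than proves that the fixed one is among the equivariant ones. One clean way to finish is topological: a graph branched covering with compatible genus-zero rotation systems, whose induced map on duals is again a branched covering, extends to a branched cover $S^2\to S^2$, and the deck group then acts by homeomorphisms of the covering sphere and hence commutes with $\rho_{X_n}$. Since the paper leaves exactly this step implicit, your proposal is not weaker than the paper here --- but as you yourself flag it as the main obstacle, spelling out how the hypothesis is consumed at this point would make the argument airtight.
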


\begin{proof}
We fix a planar representation of $X$ and let $\pi_n\colon X_n\to X$ be the natural projection.
Given a directed edge $e$ of $X_n$ there are exactly two directed edges $e_1$ and $e_2$ in $X_n^\vee$ that intersect $e$.
We wish to choose one of them (as the dual edge) depending on the orientation of $e$.
We fix once and for all a choice $e^\vee$ for $X$ and define  $e_i=e^\vee$ if  $\pi_n(e)^\vee=\pi_n(e_i)$. 

Define the action of $\mathcal{G}_n$ on the set of directed edges $\E(X_n^\vee)$ by 
\[
g \cdot e^\vee=(g\cdot e)^\vee.
\]
We extend the action to the vertices of $X_n^\vee$ by defining $g(o(e^\vee))=o(ge^\vee)$ and $g t(e^\vee)=t(g e^\vee)$. Thus, $g$ extends naturally to a morphism of graphs.
This defines a free action without inversion, because the action of $\mathcal{G}_n$ on $\E(X_n)$ is a free action without inversion. \newline

\emph{Claim:}
This action is compatible with the tower.\newline

\emph{Justification:}
Let $e\in \E(X_n^\vee)$ and let $\pi_{n,n-1}\colon X_n^\vee\to X_{n-1}^\vee$ be the natural projection.
By abuse of notation we also denote the projection $X_n\to X_{n-1}$ by $\pi_{n,n-1}$.
Let $\psi_{n,n-1}\colon \cG_n\to \cG_{n-1}$ be the corresponding projection on the groups.
We need to show
\[
\pi_{n, n-1}(g\cdot e^\vee)=\psi_{n,n-1}(g) \cdot \pi_{n,n-1}(e^\vee).
\]
By construction,
\[
\pi_{n,n-1}(g \cdot e^\vee) = \pi_{n,n-1}((g\cdot e)^\vee) = (\pi_{n,n-1}(g\cdot e))^\vee = (\psi_{n,n-1}(g) \cdot \pi_{n,n-1}(e))^\vee=\psi_{n,n-1}(g) \cdot \pi_{n,n-1}(e^\vee).
\]
This completes the proof of the claim and the theorem.
\end{proof}

We elucidate this via an example.

\begin{example}
\label{example:dual}
Let $X$ be the following graph with $4$ vertices. 
\begin{center}
\begin{tikzpicture}[scale=1]
\node[inner sep=0pt, shape = circle] (A) at (-1.5,1.5) {\tiny{$A$}};
\node[inner sep=0pt, shape = circle] (B) at (-1.5,0) {\tiny{$B$}}; 
\node[inner sep=0pt, shape = circle] (C) at (1.5,0) {\tiny{$C$}}; 
\node[inner sep=0pt, shape = circle] (D) at (1.5,1.5) {\tiny{$D$}}; 
\node[inner sep=0pt, shape = circle] (T) at (-0.82,1) {$\tau$};

\draw[thick, mid arrow] (A.south) to (B.north);
\draw[thick, mid arrow] (B.east) to (C.west);
\draw[thick, mid arrow] (C.north) to (D.south);
\draw[thick, mid arrow] (D.west) to (A.east);
\draw[thick, mid arrow, red] (A) to (C);
\end{tikzpicture}
\end{center}
Let $\tau$ be a topological generator of $\Z_p$. Let $X_\infty$ be the derived graph of this voltage assignment and let $X_n$ be the intermediate coverings. By Theorem \ref{thm:planar} all $X_n$ are planar. The graph $X_n$ has the following form (picture for $p=2$ and $n=1$).
\begin{center}
\begin{tikzpicture}[scale=1]
\node[inner sep=0pt, shape = circle] (A) at (-1.5,1.5) {\tiny{$A$}};
\node[inner sep=0pt, shape = circle] (B) at (-1.5,0) {\tiny{$B$}}; 
\node[inner sep=0pt, shape = circle] (C) at (.5,0) {\tiny{$C$}}; 
\node[inner sep=0pt, shape = circle] (D) at (.5,1.5) {\tiny{$D$}}; 

\node[inner sep=0pt, shape = circle] (B') at (1.5,1) {\tiny{$B'$}};
\node[inner sep=0pt, shape = circle] (A') at (1.5,-0.5) {\tiny{$A'$}}; 
\node[inner sep=0pt, shape = circle] (D') at (3.5,-0.5) {\tiny{$D'$}}; 
\node[inner sep=0pt, shape = circle] (C') at (3.5,1) {\tiny{$C'$}}; 

\draw[thick, mid arrow] (A.south) to (B.north);
\draw[thick, mid arrow] (B.east) to (C.west);
\draw[thick, mid arrow] (C.north) to (D.south);
\draw[thick, mid arrow] (D.west) to (A.east);
\draw[thick, mid arrow, red] (A) to[bend left=80] (C');
\draw[thick, mid arrow, red] (A') to (C);
\draw[thick, mid arrow] (A') to (B');
\draw[thick, mid arrow] (B') to (C');
\draw[thick, mid arrow] (C') to (D');
\draw[thick, mid arrow] (D') to (A');
\end{tikzpicture}
\end{center}
The dual graphs of the base graph has the following shape:
\begin{center}
    \begin{tikzpicture}
        \node[inner sep=0pt, shape = circle] (A) at (0,1.5) {\tiny{$I_1$}};
\node[inner sep=0pt, shape = circle] (B) at (-1.5,0) {\tiny{$I_2$}}; 
\node[inner sep=0pt, shape = circle] (C) at (0,0) {\tiny{$O$}}; 

\draw[thick,red] (A) to (B);
\draw[thick] (A) to [bend left=20] (C);
\draw[thick] (A) to [bend right=20] (C);
\draw[thick] (B) to [bend left=20] (C);
\draw[thick] (B) to [bend right=20] (C);
    \end{tikzpicture}
\end{center}
If we look at the dual of the first layer for $p=2$ we obtain
\begin{center}
    \begin{tikzpicture}
        \node[inner sep=0pt, shape = circle] (A) at (-1.5,1.5) {\tiny{$O_1$}};
\node[inner sep=0pt, shape = circle] (B) at (-1.5,0) {\tiny{$I_1$}}; 
\node[inner sep=0pt, shape = circle] (C) at (.5,0) {\tiny{$O_2$}}; 
\node[inner sep=0pt, shape = circle] (D) at (.5,1.5) {\tiny{$I_2$}}; 

\draw[thick, red] (B) to[bend right=10] (D);
\draw[thick, red] (B) to[bend left=10] (D);
\draw[thick] (B) to[bend right=10] (C);
\draw[thick] (B) to[bend left=10] (C);
\draw[thick] (A) to[bend right=10] (B);
\draw[thick] (A) to[bend left=10] (B);
\draw[thick] (C) to[bend right=10] (D);
\draw[thick] (C) to[bend left=10] (D);
\draw[thick] (A) to[bend right=10] (D);
\draw[thick] (A) to[bend left=10] (D);
    \end{tikzpicture}
\end{center}
The second graph is a branched covering of the first one, where the voltage assignment is trivial on all black edges and $\tau$ on the red one.
For higher $n$, the graph $X_n$ will consist of $p^n$ squares connected by red edges as drawn in the example $p=2$ and $n=1$ as above.
The dual graph has one vertex for each square denoted by $O_1, \dots ,O_{p^n}$ and two additional vertices created by the red edges of $X_n$.
We will denote these vertices by $I_1$ and $I_2$.
Each $O_i$ is connected to each $I_j$ by two black edges. 
Furthermore there are $p^n$ red edges connecting $I_1$ to $I_2$.
Thus, $X_n^\vee$ is the derived graph of the voltage assignment that is totally ramified in $I_1$ and $I_2$ and assigns $\tau$ to the red edge in $X_0^\vee$ and $1$ to all other edges.
\end{example}

{The following result gives a necessary condition for the dual graphs to indeed form a branched covering.}

\begin{corollary}{
Let $(X_n)_{n\in \mathbb{N}}$ be a $\Z_p$-tower of planar graphs and fixed planar embeddings of the $X_n$.
Assume that $(X^\vee_n)$ also builds a $\Z_p$-tower.
Let $\mathsf{v}_r$ and $\overline{\mathsf{v}}_r$ be the number of ramified vertices in the respective towers for {$n\gg0$}.
Then $0\le \mathsf{v}_r\le 2$ and $\overline{\mathsf{v}}_r = 2 - \mathsf{v}_r$.
}
\end{corollary}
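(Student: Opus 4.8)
The plan is to combine Euler's formula for connected planar graphs with the vertex-count formula of Lemma~\ref{lemma:number of vertices}, exploiting the two elementary features of planar duality: a planar graph and its dual have the same number of edges, and the faces of $X_n$ are exactly the vertices of $X_n^\vee$. Concretely, for each $n$ one has $|E(X_n^\vee)| = |E(X_n)|$ and $|V(X_n^\vee)| = |F(X_n)|$, where $F(\cdot)$ denotes the set of faces (including the unbounded one), and $X_n^\vee$ is again connected. Since every graph occurring in either tower is planar and connected, Euler's formula applies at each layer and yields
\[
|V(X_n)| - |E(X_n)| + |V(X_n^\vee)| = 2 \qquad \text{for all } n.
\]

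Next I would feed in the growth of the three terms. Both $(X_n)_{n}$ and $(X_n^\vee)_n$ are $\Z_p$-towers coming from voltage assignments, and for a $\Z_p$-tower the number of ramified vertices always stabilizes (there are only finitely many vertices, and every nontrivial closed subgroup of $\Z_p$ is of the form $p^k\Z_p$). Choosing an index $n_0$ that works simultaneously for both towers, Lemma~\ref{lemma:number of vertices} gives, for all $n\ge n_0$,
\[
|V(X_n)| = \mathsf{v}_r + \tfrac{|\mathcal{G}_n|}{|\mathcal{G}_{n_0}|}\,\mathsf{v}_u, \qquad |V(X_n^\vee)| = \overline{\mathsf{v}}_r + \tfrac{|\mathcal{G}_n|}{|\mathcal{G}_{n_0}|}\,\overline{\mathsf{v}}_u,
\]
where $\mathsf{v}_u$ and $\overline{\mathsf{v}}_u$ are the numbers of unramified vertices of $X_{n_0}$ and $X_{n_0}^\vee$. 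Because the derived-graph construction never ramifies edges ($\E(X_n)=\E(X)\times \mathcal{G}_n$), the number of undirected edges grows purely multiplicatively: $|E(X_n)| = \tfrac{|\mathcal{G}_n|}{|\mathcal{G}_{n_0}|}\,\mathsf{e}$ with $\mathsf{e}=|E(X_{n_0})|$.

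Substituting these expressions into Euler's identity and setting $m=|\mathcal{G}_n|/|\mathcal{G}_{n_0}|=p^{n-n_0}$ gives
\[
(\mathsf{v}_r+\overline{\mathsf{v}}_r-2)+m\,(\mathsf{v}_u+\overline{\mathsf{v}}_u-\mathsf{e})=0
\]
for all $n\ge n_0$, i.e.\ for infinitely many values of $m$; a linear polynomial in $m$ with infinitely many roots vanishes identically, so $\mathsf{v}_r+\overline{\mathsf{v}}_r=2$ (and incidentally $\mathsf{e}=\mathsf{v}_u+\overline{\mathsf{v}}_u$). Therefore $\overline{\mathsf{v}}_r=2-\mathsf{v}_r$, and since $\mathsf{v}_r$ and $\overline{\mathsf{v}}_r$ are non-negative integers this forces $0\le \mathsf{v}_r\le 2$. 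No step here is genuinely hard; the only points requiring care are that $X_n^\vee$ is connected (so that Euler's formula is available), that the stabilization index $n_0$ can be chosen common to both towers, and that in the voltage construction ramification affects only the vertices and not the edges, so that $|E(X_n)|$ really scales by the full degree $|\mathcal{G}_n|$.
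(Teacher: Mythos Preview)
Your proof is correct and follows essentially the same route as the paper: combine Euler's formula with Lemma~\ref{lemma:number of vertices} applied to both towers and the multiplicative growth of edges, then read off $\mathsf{v}_r+\overline{\mathsf{v}}_r=2$ from the resulting identity in $p^{n-n_0}$. The paper phrases the count via the number of faces $\mathsf{f}_n$ rather than directly as $|V(X_n^\vee)|$, but this is the same computation.
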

\begin{proof}
{Consider the $n$-th layer of the $\Z_p$-tower of planar graphs.
Let $\mathsf{v}_n$ be the number of vertices, $\mathsf{e}_n$ be the number of edges, and $\mathsf{f}_n$ be the number of faces. As $X_n$ is planar,
\[
\mathsf{f}_n = 2 + \mathsf{e}_n - \mathsf{v}_n.
\]
Let $\mathsf{v}_r$ be the number of ramified vertices and $\mathsf{v}_u$ be the number of unramified vertices in $X_{n_0}$ where $X_{n_0}$ is the index occurring in Lemma~\ref{lemma:number of vertices}.
Then for all $n\ge n_0$, it follows from an application of Lemma~\ref{lemma:number of vertices} that
\[
\mathsf{f}_n = 2 - \mathsf{v}_r + p^{n-n_0}(\mathsf{e}_0 - \mathsf{v}_u).
\]
Without loss of generality, assume that $n_0$ is also the index determined by Lemma~\ref{lemma:number of vertices} for the dual tower, otherwise we just enlarge $n_0$.
Let $\overline{\mathsf{v}}_{r}$ be the number of ramified vertices in the tower $(X_n^\vee)_{n\ge n_0}$ and $\overline{\mathsf{v}}_{u}$ be the number of unramified vertices.
As $\mathsf{f}_n$ is the number of vertices of $X_n^\vee$ we obtain that for $n \ge n_0$,
\[
2 - \mathsf{v}_r + p^{n-n_0}(\mathsf{e}_0 - \mathsf{v}_u) = p^{n-n_0}\overline{\mathsf{v}}_u + \overline{\mathsf{v}}_r.
\]
This implies that $\overline{\mathsf{v}}_u = \mathsf{e}_0 - \mathsf{v}_u$ and $2-\mathsf{v}_r=\overline{\mathsf{v}}_r$.}
\end{proof}

\begin{definition}
Let $X=(V,\E)$ be an {un}directed graph.
A \emph{dart assignment} is a function
\[
{\phi}\colon \E \longrightarrow \Z,
\]
such that ${\phi}(e)+{\phi}(\overline{e})=0$, where $\overline{e}$ denotes the inverse of $e$.
Set $F(X)$ to denote the faces of $X$, and $f^\circ(X)$ to denote the boundary of $F(X)$ in a counterclockwise orientation.
{For any dart-assignment $\phi$, define the associated \emph{vertex-configuration} and \emph{face-configuration} as follows:}
\begin{align*}
\partial {\phi} & =\sum_{v\in V(X)}\sum_{e\in \E^o_v(X)}{\phi}(e)v \\
\partial^* {\phi} & =\sum_{f\in F(X)} \sum_{e\in f^\circ(X)} {\phi}(e)f.
\end{align*}
\end{definition}

\begin{theorem}
\label{Jac are isom}
Keep the notation introduced above and assume that $(X_n^\vee)_{n\in \mathbb{N}}$ is a tower, as well.
Then as Galois modules 
\[
\Jac(X_n) \simeq \Jac(X_n^\vee).
\]
\end{theorem}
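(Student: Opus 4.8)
The plan is to exhibit $\Jac(X_n)$ and $\Jac(X_n^\vee)$ as quotients of one and the same $\Z[\mathcal{G}_n]$-module — the module of dart assignments — by the same submodule, the equality of submodules being exactly planar duality. First I would record a purely combinatorial description of the Jacobian valid for any finite connected graph $Y$. Let $C_1(Y)$ denote the group of dart assignments $\phi\colon\E(Y)\to\Z$ with $\phi(\overline{e})=-\phi(e)$, let $Z_1(Y)=\ker(\partial)$ be the cycle (flow) space, and let $B_1(Y)\subseteq C_1(Y)$ be the cut space, i.e.\ the orthogonal complement of $Z_1(Y)$ for the standard pairing on darts. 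The map $\phi\mapsto\partial\phi$ surjects $C_1(Y)$ onto $\Div^0(Y)$ with kernel $Z_1(Y)$; for a connected graph $B_1(Y)$ equals the image of the transpose incidence map $\partial^{t}\colon\Div(Y)\to C_1(Y)$, and $\partial\circ\partial^{t}$ is the Laplacian of $Y$, so $\partial(B_1(Y))=\Pr(Y)$. A short diagram chase then gives a natural isomorphism
\[
C_1(Y)\big/\bigl(Z_1(Y)+B_1(Y)\bigr)\;\xrightarrow{\ \sim\ }\;\Div^0(Y)/\Pr(Y)=\Jac(Y),\qquad \phi\longmapsto\partial\phi .
\]
Because $\partial$, $Z_1(Y)$ and $B_1(Y)$ are defined purely in terms of the graph structure, this isomorphism is equivariant for $\Aut(Y)$ acting without inversion; in particular, taking $Y=X_n$, it is $\Z[\mathcal{G}_n]$-linear.

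Next I would bring in planar duality layer by layer. Fixing the planar embedding of $X_n$ gives the canonical bijection $e\leftrightarrow e^\vee$ between $\E(X_n)$ and $\E(X_n^\vee)$, hence an identification $C_1(X_n)=C_1(X_n^\vee)$. Under it, the face-configuration map $\partial^{*}$ of $X_n$ agrees, up to the orientation convention chosen for the dual darts, with the vertex-boundary map $\partial$ of $X_n^\vee$ (the faces of $X_n$ are exactly the vertices of $X_n^\vee$), so in particular $\ker(\partial^{*}_{X_n})=Z_1(X_n^\vee)$. The one genuinely planar input is the identity $\ker(\partial^{*}_{X_n})=B_1(X_n)$: a dart assignment has zero sum around every face precisely when it is orthogonal to every face-boundary cycle, and for a planar graph any $\mathsf{f}_n-1$ of the face-boundary cycles form a $\Z$-basis of $Z_1(X_n)$ (this is where the Euler relation $\mathsf{v}_n-\mathsf{e}_n+\mathsf{f}_n=2$ is used, equivalently Whitney's characterisation of planar duality). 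Combining the last two sentences, as submodules of the common edge module $C_1(X_n)=C_1(X_n^\vee)$ one has $\{Z_1(X_n),B_1(X_n)\}=\{B_1(X_n^\vee),Z_1(X_n^\vee)\}$, and therefore
\[
Z_1(X_n)+B_1(X_n)=Z_1(X_n^\vee)+B_1(X_n^\vee).
\]

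Putting the two steps together yields
\[
\Jac(X_n)\;\cong\;C_1(X_n)\big/\bigl(Z_1(X_n)+B_1(X_n)\bigr)\;=\;C_1(X_n^\vee)\big/\bigl(Z_1(X_n^\vee)+B_1(X_n^\vee)\bigr)\;\cong\;\Jac(X_n^\vee).
\]
It remains to check $\mathcal{G}_n$-equivariance of the whole chain. The two outer isomorphisms are equivariant by the first step. The middle identification of edge modules is equivariant because the Galois action on $X_n^\vee$ constructed in Theorem~\ref{dual graph theorem} was defined by $g\cdot e^\vee=(g\cdot e)^\vee$, so it commutes with the bijection $e\leftrightarrow e^\vee$; moreover that action is by graph automorphisms without inversion, hence preserves the cycle and cut spaces on both sides. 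This gives the asserted isomorphism of Galois modules, and (since all the maps commute with the transition maps in the tower) these isomorphisms are compatible across $n$.

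I expect the main obstacle to be the bookkeeping in the planar-duality step: fixing a consistent orientation convention for the dual darts so that $\partial^{*}_{X_n}$ genuinely realises $\partial_{X_n^\vee}$ up to sign, and verifying carefully that the face-boundary cycles generate the cycle lattice $Z_1(X_n)$ over $\Z$ and not merely over $\Q$ — this is the only place the planarity hypothesis enters, and it is also the only place the fixed choice of embedding matters. Everything else is formal homological algebra over $\Z[\mathcal{G}_n]$.
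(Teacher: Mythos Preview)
Your proposal is correct and at its core coincides with the paper's argument: both use dart assignments and the pair of maps $\partial,\partial^{*}$ to transport between $\Jac(X_n)$ and $\Jac(X_n^\vee)$, and both rely on the $\mathcal{G}_n$-action on $X_n^\vee$ being defined via $g\cdot e^\vee=(g\cdot e)^\vee$ to get equivariance.

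The packaging differs slightly. The paper cites Cori--Rossin for the abstract isomorphism $\theta\colon[\partial\phi]\mapsto[\partial^{*}\phi]$ and then checks $G$-equivariance by a direct elementwise computation, verifying $g\,\partial\phi=\partial(g\phi)$ and $g\,\partial^{*}\phi=\partial^{*}(g\phi)$. You instead realise both Jacobians as the \emph{same} quotient $C_1/(Z_1+B_1)$ of the common edge module, using that planar duality swaps the cycle and cut lattices; equivariance then drops out because the identification $C_1(X_n)=C_1(X_n^\vee)$ is $\mathcal{G}_n$-linear by construction and $Z_1,B_1$ are functorial. What your framing buys is self-containment (you do not need the external reference) and a cleaner reason for equivariance; what the paper's framing buys is brevity, outsourcing the ``face-boundaries generate $Z_1$ over $\Z$'' verification---precisely the point you flag as the main obstacle---to the cited source.
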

    
\begin{proof}
As abstract groups, the isomorphism of the Jacobian groups is proven in \cite[Theorem~2]{cori2000sandpile}.
Let $u$ be an element in ${\Div}^0(X)$.
By \cite[Proposition~3.2]{cori2000sandpile} there exists a dart assignment ${\phi}$ such that $\partial {\phi} =u$.
It follows that $\partial^*({\phi})\in {\Div}^0(X^\vee)$.
By \cite[proof of Theorem~2]{cori2000sandpile} this assignment induces a well-defined bijective homomorphism
\[
\theta\colon {\Jac}(X)\longrightarrow {\Jac}(X^\vee).
\]
It remains to check that $\theta$ is a $G$-homomorphism.
Let $x\in \Jac(X)$ and let $u$ be a lift in ${\Div}^0(X)$.
Then
\begin{align*}
g u=g\sum_{v\in V(X)} u_vv=\sum_{v\in V(X)}u_vg(v)=\sum_{v\in V(X)}u_{g^{-1}v}v & =\sum_{v\in V(X)}\sum_{e\in \E^o_{g^{-1}v}}{\phi}(e)v\\
&=\sum_{v\in V(X)}\sum_{e\in \E^o_v}{\phi}(g^{-1}e)v\\
& =\sum_{v\in V(X)}\sum_{e\in \E^o_v}(g{\phi})(e)v\\
& = \partial(g{\phi}).
\end{align*}
Therefore,
\[
\theta(gx)=\partial^*(g{\phi})+\Pr(X^\vee)\in {\Jac}(X^\vee).
\]
Furthermore,
\[
\partial^*(g{\phi})=\sum_{f\in F(X)}\sum_{e\in f^\circ(X)}{\phi}(g^{-1}e)f=\sum_{f\in F(X)}\sum_{e\in f^\circ(X)}{\phi}(e)gf=g\partial^*({\phi}).
\]
Therefore
\[
g\theta(x)=\theta(gx). \qedhere
\] 
\end{proof}

\begin{remark}
It is crucial that we work with the Jacobian group here and not with the whole Picard group.
For Picard groups the above statement is in general not true.
Indeed, consider a  graph $X$ with $1$ vertex and one loop. Let $X_\infty/X$ be an unramified $\Z_p$-tower of $X$ with intermediate layers $X_n$.
In this case $\Pic(X_n)\cong \Lambda/(T^2,\omega_n)$ where $\omega_n {= \omega_n(T)}$ was defined previously in Theorem~\ref{thm:projection}.
{We define $\nu_{n,0}=\omega_n/T$.}
For $n>0$, the dual graph $X_n^\vee$ has $2$ vertices and $p^n$ edges between these vertices.
In this case 
\[
\Pic(X_n^\vee)\cong \left((\Lambda/T)e_1\oplus(\Lambda/T)e_2\right)/(\nu_{n,0}(e_1-e_2)).
\]
These two modules are certainly not isomorphic as $\Lambda$-modules. But
\[\Jac(X_n^\vee)=\left((\Lambda/T)(e_1-e_2)\right)/\nu_{n,0}(e_1-e_2)\cong\left((\Lambda/T)(e_1-e_2)\right)/p^n(e_1-e_2)\cong T\Lambda/(T^2,\omega_n)\cong \Jac(X_n) \]
\end{remark}

Recall that if $X$ and $X^\vee$ are dual planar graphs and $\mathcal{T}$ is a spanning tree for $X$, then the complement of the edges dual to $\mathcal{T}$ is a spanning tree for $X^\vee$, see \cite{biggs1971spanning}.
It now follows that
\[
\ord_p(\kappa(X_n)) = \ord_p(\kappa(X_n^\vee)).
\]
In other words, the generalized Iwasawa invariants associated with the finite layers above the graph $X$ and $X^\vee$ must coincide.
Another way to see the above equality is to recall the fact that $\kappa(X_n) = \# \Jac(X_n)$, the equality is immediate from Theorem~\ref{Jac are isom}.

By Theorem~\ref{Jac are isom}, we can conclude that $\Jac_{\Lambda}(X_\infty) \simeq \Jac_{\Lambda}(X_\infty^\vee)$ as $\Lambda$-modules.
Therefore,
\[
{\Char}_{\Lambda}(\Jac_{\Lambda}(X_\infty)) = {\Char}_{\Lambda}(\Jac_{\Lambda}(X_\infty^\vee)).
\]

\begin{example}
Revisiting Example \ref{example:dual} we can now compute the characteristic ideal of $\Jac(X_\infty)$ by computing the characteristic ideal of $\Jac(X_\infty^\vee)$.
While the computation of $\Jac(X_\infty)$ involves the computation of a $5\times 5$ matrix, the computation of the characteristic ideal of $\Jac(X_\infty^\vee)$ only involves the computation of the determinant of the following $3\times 3$ matrix
\[
\frac{1}{T}\det\left(\begin{bmatrix}
        4&0&0\\
        -2&T&0\\
        -2&0&T
    \end{bmatrix}\right)=4T.
\]
\end{example}

\subsection{Applications}

\label{sec app}

\begin{corollary}
\label{cor:app}
Assume that $(X_n)_{n\in \mathbb{N}}$ is an \emph{unramified} $\Z_p$-tower of planar graphs.
Assume that for each $n$, $X_n^\vee/X^\vee$ is a branched covering. 
Then $T\mid \Char_\Lambda({\Jac}_\Lambda(X_\infty))$.
\end{corollary}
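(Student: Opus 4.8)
The plan is to transport the question to the dual tower and then apply the structure theory of Section~\ref{sec: towers of derived graphs}. Since $X_n^\vee/X^\vee$ is a branched covering for every $n$, Theorem~\ref{dual graph theorem} shows that $X_\infty^\vee/X^\vee$ is a $\Z_p$-tower of planar graphs with layers $X_n^\vee$; in particular $X_\infty^\vee$ is a (possibly ramified) derived graph of $X^\vee$, so Theorem~\ref{thM.char-ideals} applies to it. Because the tower $(X_n)_n$ is unramified we have $\mathsf{v}_r=0$, so the preceding corollary (equivalently, Lemma~\ref{lemma:number of vertices} together with Euler's formula for planar graphs) forces $X_\infty^\vee/X^\vee$ to have exactly $\overline{\mathsf{v}}_r=2$ ramified vertices, say $w_1$ and $w_2$, with rank-one inertia subgroups $I_{w_1},I_{w_2}\subseteq\Z_p$ (every non-trivial closed subgroup of $\Z_p$ has rank one). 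As already recorded after Theorem~\ref{Jac are isom}, $\Char_\Lambda(\Jac_\Lambda(X_\infty))=\Char_\Lambda(\Jac_\Lambda(X_\infty^\vee))$, so it suffices to show that $T$ divides a generator of $\Char_\Lambda(\Jac_\Lambda(X_\infty^\vee))$.

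To do this I would first analyse $\Char_\Lambda(\Pic_\Lambda(X_\infty^\vee))$. By Theorem~\ref{thM.char-ideals} this ideal equals $(\det\Delta)$, and for a derived graph $\det\Delta=\det(D-B)$ with $D,B$ the matrices of Definition~\ref{matrix defn} (see \cite[Section~6]{GV24}). Ordering the vertices of $X^\vee$ so that the unramified ones come first, $D-B$ is block lower triangular with bottom-right block the diagonal matrix $\operatorname{diag}(\widetilde{T}_{w_1},\widetilde{T}_{w_2})$, where $\widetilde{T}_{w_i}=\sigma_{w_i}-1$ for a topological generator $\sigma_{w_i}$ of $I_{w_i}$. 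Hence $\det\Delta$ is the determinant of the ``unramified block'' times $\widetilde{T}_{w_1}\widetilde{T}_{w_2}$, and since $\widetilde{T}_{w_i}=(1+T)^{p^{a_i}}-1$ is divisible by $T$, we get $T^2\mid\det\Delta$; that is, $T^2$ divides a generator of $\Char_\Lambda(\Pic_\Lambda(X_\infty^\vee))$.

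It then remains to compare $\Pic_\Lambda$ with $\Jac_\Lambda$. For each $n$ the degree map yields a short exact sequence $0\to\Jac(X_n^\vee)[p^\infty]\to\Pic(X_n^\vee)\otimes\Z_p\to\Z_p\to0$ with trivial $\mathcal{G}_n$-action on the quotient; since the groups $\Jac(X_n^\vee)[p^\infty]$ are finite with surjective transition maps, passing to the inverse limit preserves exactness and gives a short exact sequence of torsion $\Lambda$-modules $0\to\Jac_\Lambda(X_\infty^\vee)\to\Pic_\Lambda(X_\infty^\vee)\to\Lambda/(T)\to0$. Multiplicativity of characteristic ideals then gives $\Char_\Lambda(\Pic_\Lambda(X_\infty^\vee))=(T)\,\Char_\Lambda(\Jac_\Lambda(X_\infty^\vee))$. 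Combining this with $T^2\mid\det\Delta$ and using that $T$ is a prime element of the unique factorization domain $\Lambda$, we conclude $T\mid\Char_\Lambda(\Jac_\Lambda(X_\infty^\vee))=\Char_\Lambda(\Jac_\Lambda(X_\infty))$.

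I expect the main obstacle to be the bookkeeping in the first paragraph: one must be sure that Theorem~\ref{dual graph theorem} genuinely presents $X_\infty^\vee$ as a derived graph with a bona fide inertia datum, and that the Euler-characteristic count (the only place where planarity enters) really forces the number of ramified vertices of the dual tower to be \emph{two}, not merely at most two. The block-triangular determinant computation and the passage to the limit in the $\Pic$--$\Jac$ sequence are then routine.
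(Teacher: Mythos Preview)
Your argument is correct and follows essentially the same route as the paper: pass to the dual tower, use the Euler-characteristic count to pin down two ramified vertices, read off $T^2\mid\det(D-B)=\det\Delta$ from the block-triangular shape, and then divide out one factor of $T$ in the passage from $\Pic_\Lambda$ to $\Jac_\Lambda$. The only cosmetic difference is that the paper invokes \cite[Theorem~5.9]{GV24} for this last step, whereas you construct the short exact sequence $0\to\Jac_\Lambda(X_\infty^\vee)\to\Pic_\Lambda(X_\infty^\vee)\to\Lambda/(T)\to 0$ by hand; both yield $\Char_\Lambda(\Pic_\Lambda)=(T)\,\Char_\Lambda(\Jac_\Lambda)$. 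Note also that the Euler computation actually forces the two ramified vertices of $X^\vee$ to be \emph{totally} ramified (so $\widetilde{T}_{w_i}=T$ exactly), which slightly sharpens your ``$\widetilde{T}_{w_i}=(1+T)^{p^{a_i}}-1$'' but is not needed for the divisibility conclusion.
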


\begin{proof}
Let $\mathsf{e}_n$ be the number of undirected edges, $\mathsf{v}_n$ be the number of vertices, and $\mathsf{f}_n$ be the number of faces of $X_n$.
As all graphs are planar, the Euler characteristic formula asserts that for all $n$
\begin{align*}
2 & = \mathsf{v}_n - \mathsf{e}_n + \mathsf{f}_n\\
& = p^n(\mathsf{v}_0-\mathsf{e}_0)+\mathsf{f}_n \textrm{ since } X_n/X_0 \textrm{ is unramified.}
\end{align*}
Rearranging the terms,
\[
\mathsf{f}_n = 2 - p^n(\mathsf{v}_0 - \mathsf{e}_0) = 2 + p^n(\mathsf{f}_0-2).
\]
    
By Lemma \ref{lemma:number of vertices}, we know that $X_n^\vee$ has exactly two totally ramified vertices and that all other vertices are unramified.
By construction, the matrix $D-B$ has the form
\[\begin{bmatrix}
    *&*&\dots &*&*&*\\
    \dots&\dots&\dots &\dots &*&*\\
    \dots&\dots&\dots &\dots &*&*\\
    \dots&\dots&\dots &\dots &*&*\\
    \dots&\dots&\dots &\dots &*&*\\
    0&0&0&0&T&0\\
    0&0&0&0&0&T
\end{bmatrix}\]
The determinant of this matrix is clearly divisible by $T^2$. 
The claim follows from \cite[Theorem~5.9]{GV24} and the fact that ${\Char}_{\Lambda}(\Jac_{\Lambda}(X_\infty)) = {\Char}_{\Lambda}(\Jac_{\Lambda}(X_\infty^\vee))$. 
\end{proof}

\begin{example}
The above corollary can be applied to the unramified cover given in Example~\ref{example:dual}.
Indeed, if one computes the characteristic ideal of $\Jac(X_\infty)$ one obtains $4T$ which is clearly divisible by $T$. 
\end{example}

\begin{corollary}
{Let $X$ be a finite connected graph and let $\alpha$ be a single voltage assignment.
Let $(X_n)_{n\in \mathbb{N}}$ be the unramified $\Z_p$-tower obtained from $\alpha$.
Then $T\mid \Char_\Lambda(\Jac_\Lambda(X_\infty))$.}
\end{corollary}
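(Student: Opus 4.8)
The plan is to reduce the statement to a single determinant divisibility and then exploit that a single voltage edge perturbs the Laplacian only in a very controlled way. Since $(X_n)_{n\in\mathbb N}$ is the unramified $\Z_p$-tower attached to $\alpha$ (with connected layers, as usual), all vertices are unramified, so the operator $\Delta$ introduced before Theorem~\ref{thM.char-ideals} acts on $\Div_\Lambda(X_\infty)$ simply by $w^{\unr}_{v,\infty}\mapsto P^{\unr}_v$. Fixing the vertex basis $\{(v_1,1),\dots,(v_s,1)\}$ of $\Div_\Lambda(X_\infty)\cong\Lambda^s$ and identifying $\Lambda\cong\Z_p\llbracket T\rrbracket$ via $\tau\mapsto 1+T$ for a topological generator $\tau$ of the Galois group, $\Delta$ is represented by the twisted Laplacian $L_\alpha=D-A_\alpha$, where $D$ is the valency matrix and $(A_\alpha)_{i,j}=\sum_{e\colon o(e)=v_j,\,t(e)=v_i}\alpha(e)$. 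By the Remark following Theorem~\ref{thM.char-ideals} (that is, \cite[Theorem~5.2]{Kleine-Mueller4}) one has $(\det\Delta)=(T)\cdot\Char_\Lambda(\Jac_\Lambda(X_\infty))$, so it suffices to prove $T^2\mid\det(L_\alpha)$.

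Next I would make the perturbation explicit. Let $e_0$ be the unique edge with $\alpha(e_0)=\tau\ne 1$, and write $a,b$ for the indices of $o(e_0)$ and $t(e_0)$ (possibly $a=b$). Letting $L=D-A$ be the ordinary graph Laplacian of $X$, which is the specialization of $L_\alpha$ at $T=0$, an entry-by-entry comparison of $A_\alpha$ with $A$ shows that $L_\alpha=L+P$, where $P$ is the matrix with $P_{b,a}=-(\tau-1)$ and $P_{a,b}=-(\tau^{-1}-1)$, these two entries merging into $P_{a,a}=-(\tau+\tau^{-1}-2)$ when $a=b$, and all other entries zero. Two features of $P$ drive the proof: (i) every entry of $P$ lies in $T\Lambda$, since $\tau-1=T$ and $\tau^{-1}-1=-T/(1+T)$; and, crucially using the relation $\alpha(\overline{e_0})=\alpha(e_0)^{-1}$, (ii) the sum of all entries of $P$ equals $2-\tau-\tau^{-1}=-(\tau-1)^2\tau^{-1}=-T^2/(1+T)\in T^2\Lambda$. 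This is precisely where the single-voltage hypothesis enters: in general the various edge contributions need not telescope into a second-order quantity.

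Finally I would expand $\det(L_\alpha)=\det(L+P)$ by multilinearity in the columns, grouping the terms according to the set $S$ of columns taken from $P$. If $|S|\ge 3$ the corresponding term vanishes, because $P$ has at most two nonzero rows; the $S=\varnothing$ term is $\det L=0$ since $L\mathbf 1=0$; the terms with $|S|=1$ sum, via cofactor expansion along the single column and the Matrix--Tree Theorem (all cofactors of the Laplacian of a connected graph equal $\kappa(X)$), to $\kappa(X)$ times the sum of all entries of $P$, hence lie in $T^2\Lambda$ by (ii); and each term with $|S|=2$, after a Laplace expansion along those two columns, is a $\Lambda$-linear combination of $2\times 2$ minors of $P$, each of which lies in $T^2\Lambda$ by (i). Summing, $\det(L_\alpha)\in T^2\Lambda$, and therefore $T\mid\Char_\Lambda(\Jac_\Lambda(X_\infty))$.

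I do not anticipate a serious obstacle once the reduction of the first paragraph is in place: the core is elementary. The points that require the most care are bookkeeping — verifying that $\Delta$ is indeed represented by $D-A_\alpha$ (its transpose would serve equally well for the determinant) and checking the loop case $a=b$ of the perturbation — together with the telescoping cancellation in (ii), which is the conceptual heart but amounts to a one-line computation in $\Z_p\llbracket T\rrbracket$.
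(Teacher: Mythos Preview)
Your proof is correct and takes a genuinely different route from the paper's. The paper argues geometrically: it fixes a planar embedding of $X$ with $e_0$ on the outer face, uses Theorem~\ref{thm:planar} to see that each $X_n$ is planar, verifies directly that the dual graphs $X_n^\vee$ form a branched covering of $X^\vee$ (with the outer face and the face bounded by $e_0$ as the two totally ramified vertices), and then invokes Corollary~\ref{cor:app}. Your argument is purely algebraic: you identify $\Delta$ with the twisted Laplacian $L_\alpha$, observe that the single-voltage hypothesis makes $L_\alpha-L$ a rank-at-most-two perturbation whose entries lie in $T\Lambda$ and whose total sum lies in $T^2\Lambda$, and read off $T^2\mid\det L_\alpha$ from a column-multilinearity expansion together with the Matrix--Tree identity that all cofactors of $L$ equal $\kappa(X)$.

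Each approach buys something different. The paper's proof showcases the dual-graph machinery of Section~\ref{sec: results on dual graphs} and explains why the corollary sits there, but it tacitly requires $X$ to be planar (the proof begins by choosing a planar embedding), a hypothesis absent from the statement. Your argument is more elementary, avoids planarity altogether, and hence establishes the corollary exactly as stated; it also makes transparent where the single-voltage hypothesis enters, namely in the telescoping identity $-(\tau-1)-(\tau^{-1}-1)=-T^2/(1+T)$. One cosmetic remark: your ``$|S|\ge 3$'' case is in fact vacuous once you note that $P$ has at most two nonzero columns, though the ``at most two nonzero rows'' reasoning you give is equally valid.
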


\begin{proof}
{Let $e_0$ be the edge of $X$ having a non-trivial voltage assignment.
We fix an embedding of $X$ such that $e_0$ lies on the outer face.}

{By Theorem~\ref{thm:planar} all graphs $X_n$ are planar.
For each $X_n$ we fix an embedding in the plane.
Let $X_n^\vee $ be the corresponding dual graphs.
If we can show that $X_n^\vee/X^\vee $ is a branched covering, the claim will follow from Corollary~\ref{cor:app}. }

{Let $X'$ be the graph obtained from $X$ by deleting the edge $e_0$.
Then $X_n$ consists of $p^n$ copies of $X'$ (the different sheets) and the additional edges that restrict to $e_0$.
We draw $X_n$ as $p^n$-copies of $X'$ next to each other and then add the additional edges.
The graph $X_n$ has three different types of faces: 
\begin{enumerate}
    \item faces that lie completely in one sheet of $X_n$.
    \item one outer face
    \item one face that is neither of the first two.
\end{enumerate}
Let
\[
\pi_n\colon X_n \longrightarrow X
\]
be the covering of graphs.
This induces a surjective map of graphs
\[
\pi^\vee_n\colon X_n^\vee \longrightarrow X^\vee.
\]}
{Consider a face $F_{{n}}$ of $X_n$ of type (1), and fix a path $\mathsf{P}_{F_n}$ around its boundary counterclockwise.
Note that then $\pi_n(\mathsf{P}_{F_{{n}}})=\mathsf{P}_{F'}$ where $F'$ is a face of $X'$ (and can also be viewed as a face of $X$).
Every edge of $F_n$ has a corresponding edge in $X_n^\vee$ by construction of the dual graph.
Note that each edge starting or ending in $F_{{n}}$ corresponds to (projects to) exactly one edge in $X^\vee$.
In particular, the edges of a fixed face $F_{n}$ of type (1) and edges of $F'$ are in one to one correspondence.
We further note that the face $F'$ is a vertex in $X^\vee$.}

{Now, consider a face $F_{n}$ of $X_n$ type (2).
Observe that $\pi_n(\mathsf{P}_{F_{n}})=p^n \mathsf{P}_O$, where $O$ is the outer face of $X$.
If $F_n$ is a face of type (3), then $\pi_n(\mathsf{P}_{F_n})=p^n \mathsf{P}_{F_0}$, where $F_0$ is the face of $X$ having $e_0$ on its boundary but is not the outer face. 
Thus there is a $p^n$-to-1 correspondence between edges starting/ending at $F_{n}$ and edges starting and ending at $O$.
Thus, $\pi^\vee_n$ is indeed a branched covering.}
\end{proof}

\bibliographystyle{amsalpha}
\bibliography{references}

\end{document}